\numberwithin{equation}{section}
\newcommand{\IB}{{\mathbb B}}
\newcommand{\IC}{{\mathbb C}}
\newcommand{\IN}{{\mathbb N}}
\newcommand{\IZ}{{\mathbb Z}}
\newcommand{\IK}{{\mathbb K}}
\newcommand{\IT}{{\mathbb T}}
\newcommand{\cA}{{\mathcal A}}
\newcommand{\cB}{{\mathcal B}}
\newcommand{\cE}{{\mathcal E}}
\newcommand{\cH}{{\mathcal H}}
\newcommand{\cK}{{\mathcal K}}
\newcommand{\cC}{{\mathcal C}}
\newcommand{\cL}{{\mathcal L}}
\newcommand{\cM}{{\mathcal M}}
\newcommand{\cN}{{\mathcal N}}
\newcommand{\cO}{{\mathcal O}}
\newcommand{\cS}{{\mathcal S}}
\newcommand{\fA}{{\mathfrak A}}
\newcommand{\fs}{{\mathfrak s}}
\newcommand{\acts}{\curvearrowright}
\newcommand{\id}{\mathrm{id}}
\newcommand{\KK}{\mathrm{KK}}
\newcommand{\Cs}{C$^\ast$}
\newcommand{\rg}{\mathop{{\mathrm C}_{\mathrm r}^\ast}}
\newcommand{\two}{I\hspace{-1.2pt}I}
\newcommand{\three}{I\hspace{-1.2pt}I\hspace{-1.2pt}I}
\newtheorem{thm}{Theorem}[section]
\newtheorem{prop}[thm]{Proposition}
\newtheorem{cor}[thm]{Corollary}
\newtheorem{lem}[thm]{Lemma}
\newtheorem*{no}{Notation}
\newtheorem*{lem*}{Lemma}
\newtheorem*{thm*}{Theorem}
\newtheorem*{aprop}{Proposition A}
\newtheorem*{alem}{Lemma B}
\theoremstyle{definition}
\newtheorem{defn}[thm]{Definition}
\newtheorem{exa}[thm]{Example}
\newtheorem{rmk}[thm]{Remark}
\title[]{Inclusions of simple \Cs-algebras arising from compact group actions.}
\begin{document}
\author{Miho Mukohara}
\address{Department of Mathematical Sciences, The University of Tokyo, 3-8-1 Komaba, Tokyo, 153-8914
Japan}
\email{miho-mukohara@g.ecc.u-tokyo.ac.jp\\
miho.mukohara@gmail.com}
\date{\today}

\begin{abstract}
Inclusions of operator algebras have long been studied. In particular,
inclusions arising from actions of compact groups on
factors were studied by Izumi-Longo-Popa and others. The
correspondence between intermediate subfactors and subgroups is called
the Galois correspondence.
Analogues for actions on \Cs-algebras have been studied by Izumi,
Cameron-Smith, Peligrad, and others.
In this article,
we give examples of compact group actions on simple \Cs-algebras for which the Galois correspondence holds.
\end{abstract}
\maketitle

\section{Introduction}
Inclusions of \Cs-algebras have been studied in several ways.
When a compact group $G$ acts on a \Cs-algebra $A$,
there exists an inclusion $A^{G}\subset A$.
In the case of von Neumann algebras,
it is a well-known result shown by Izumi-Longo-Popa \cite{ILP} that if the compact group action $G\acts M$ on a factor $M$ is minimal,
then there is a natural bijection from the lattice of intermediate subfactors onto the lattice of closed subgroups of $G$.
More generally,
the intermediate lattices of discrete subfactors are well-studied.
(See \cite{T}, \cite{JP}.)
For \Cs-algebras,
finite index inclusions \cite{I1,W} and inclusions arising from discrete group actions \cite{CS} are well understood.
Moreover,
discrete inclusions of unital \Cs-algebras generated by actions of unital tensor categories are studied by Hern{\'a}ndez Palomares and Nelson \cite{NP}, 
and intermediate discrete inclusions are characterized by algebraic objects.
It is a natural question for a given discrete inclusion of \Cs-algebras,
whether every intermediate inclusion is automatically discrete.
If a compact group $G$ has a minimal action on a factor $M$
(i.e.,
the action is faithful and the relative commutant $M\cap (M^{G})^{\prime}$ is trivial),
then every inclusion $M^{G}\subset N$ is automatically discrete for every intermediate subfactor $M^{G}\subset N\subset M$. 
The goal of this article is to give examples of inclusions of \Cs-algebras such that all intermediate inclusions are discrete.

In Section 4,
we introduce a condition (Condition ($*$)) for inclusions of \Cs-algebras.
An inclusion with Condition ($*$) is a generalization of inclusions arising from discrete group actions such that each non-trivial automorphism is properly outer.
In Theorem 4.10,
under some countability assumptions,
we prove that if a compact group action $G\acts A$ with a simple fixed point algebra induces the inclusion $A^G\subset A$ with Condition ($*$),
then the inclusion $A^G\subset A$ is \Cs-irreducible and the Galois correspondence holds.
Moreover,
we give examples of compact group actions that satisfy the assumptions of Theorem 4.10, such as quasi-product actions with simple fixed point algebras (Corollary \ref{cor ab}),
isometrically shift-absorbing actions (Corollary \ref{cor isa}),
and free product actions (Corollary \ref{cor free}).
The notion of a quasi-product action is introduced in \cite{BEK}
(see Section \ref{seq quasi product}).
This is a class of compact group actions on separable \Cs-algebras whose non-trivial dual endomorphisms are properly outer.
The gauge actions on the Cuntz algebras are examples of quasi-product actions.
Isometrically shift-absorbing actions are introduced by Gabe-Szab\'{o} \cite{GS} for the equivariant classification theory of Kirchberg algebras.
This is an important class of actions on purely infinite \Cs-algebras
(see Section \ref{sec isom shift abs}).

According to \cite[Theorem 1 (8)]{BEK},
isometrically shift-absorbing actions on simple separable \Cs-algebras are quasi-product actions.
Moreover,
after the original version of this paper was submitted, 
Izumi's preprint \cite{I2} appeared on arXiv. 
He gave a remarkable characterization of a
quasi-product action in \cite[Theorem 1.1]{I2},
which gives the following
improvement of our main result when a \Cs-algebra $A$ is separable.
\begin{thm*}[Theorem \ref{main2}]
 Let $\alpha\colon G\acts A$ be a faithful action of a compact second countable group $G$ on a separable \Cs-algebra $A$.
    If the fixed point algebra $A^{G}$ is simple and the relative commutant $M(A)\cap (A^{G})^{\prime}$ is trivial,
    then the map
\[
\Phi\colon\{H\mid H\leq G\}\ni H\mapsto A^{H}\in\{D\mid A^{G}\subset D\subset A\}
\] 
is a bijection from the set of all closed subgroups of $G$ to the set of all intermediate \Cs-subalgebras between $A^G$ and $A$.
\end{thm*}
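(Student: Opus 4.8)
The plan is to derive the statement from Theorem 4.10 by checking that the present hypotheses are subsumed by those of that theorem. Theorem 4.10 asks for three things: certain countability assumptions, simplicity of $A^{G}$, and that $\alpha$ induce the inclusion $A^{G}\subset A$ with Condition ($*$). The first is immediate, since $A$ is separable and $G$ is second countable, and the second is assumed outright. So the entire burden is to verify Condition ($*$) for $A^{G}\subset A$.

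For this I would pass through the class of quasi-product actions. As recalled in Section \ref{seq quasi product}, a quasi-product action has all of its non-trivial dual endomorphisms properly outer, and it is exactly this proper outerness that yields Condition ($*$) for the associated inclusion; this is how the examples behind Corollary \ref{cor ab} are produced. Hence it suffices to show that $\alpha$ is a quasi-product action, and here is where Izumi's characterization \cite[Theorem 1.1]{I2} enters: among faithful actions of a compact second countable group on a separable \Cs-algebra, being quasi-product is detected by the triviality of the relative commutant $M(A)\cap (A^{G})^{\prime}$, which is precisely our standing hypothesis. Combining quasi-product status with the simplicity of $A^{G}$, Corollary \ref{cor ab} shows that the assumptions of Theorem 4.10 are met, and that theorem then gives that $A^{G}\subset A$ is \Cs-irreducible and that $\Phi$ is a bijection onto the intermediate \Cs-subalgebras; retaining only the bijectivity statement gives the assertion.

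The only genuinely external ingredient is \cite[Theorem 1.1]{I2}; everything else is assembling results already in the paper. Accordingly, the step one cannot shortcut is the passage from the abstract hypothesis $M(A)\cap (A^{G})^{\prime}=\IC 1_{M(A)}$ to the structural conclusion that every non-trivial dual endomorphism of $\alpha$ is properly outer — a priori a faithful action with trivial relative commutant in the multiplier algebra need not have this property, and it is Izumi's theorem that closes the gap. Once quasi-product status is in hand, the remaining work is bookkeeping: checking the countability hypotheses, invoking that quasi-product actions satisfy Condition ($*$), and applying Theorem 4.10, with injectivity of $\Phi$ coming from the uniqueness of the conditional expectation onto each $A^{H}$ (a consequence of \Cs-irreducibility) together with faithfulness of $\alpha$, and surjectivity being the substantive Galois content supplied by Theorem 4.10.
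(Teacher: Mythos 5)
Your proposal matches the paper's own proof: invoke Izumi's Theorem 1.1 of \cite{I2} to conclude that $\alpha$ is a quasi-product action, then apply Example \ref{exa ab} to get Condition ($*$) and feed the result into Theorem \ref{main}. The only cosmetic difference is your extra gloss on injectivity of $\Phi$, which is already contained in the uniqueness assertion of Theorem \ref{main}.
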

When $A$ is separable,
Corollaries \ref{cor isa} and \ref{cor free} follow from the above theorem.
However,
since both \cite[Theorem 1]{BEK} and \cite[Theorem 1.1]{I2} are very deep results requiring difficult proofs, 
we provide proofs of Corollaries \ref{cor isa} and \ref{cor free} that are independent of these theorems.

\section{Preliminaries}\label{sec pre}
\subsection{Notations}\label{prep}
We summarize the notations we use throughout this article.
Let $B\subset A$ be a non-degenerate inclusion of $\sigma$-unital \Cs-algebras with the canonical inclusion $\iota\colon B\rightarrow A$ and a conditional expectation $E$ from $A$ onto $B$.
The multiplier algebras of $A$ and $B$ are denoted by $M(A)$ and $M(B)$,
respectively. 
We use the following notations.
\begin{itemize}
\item The extended conditional expectation from $M(A)$ onto $M(B)$ is also denoted by $E$.
\item Let $\cE_{E}=(\cE_{E}, \langle,\rangle)$ be the canonical Hilbert \Cs-module associated with $E$,
$\eta\colon A\rightarrow \cE_{E}$ be the canonical inclusion,
(i.e., $\langle\eta(a),\eta(b)\rangle=E(a^{*}b)$ for every $a,b\in A$).
The symbol $\cL(\cE_{E})$ denotes the \Cs-algebra of all adjointable operators of $\cE_{E}$.
\item There is the natural left action $\phi\colon A\rightarrow\cL(\cE_{E})$ of $A$ on $\cE_{E}$.
For any $a\in A$ and $f\in\cE_{E}$,
we write $af$ instead of $\phi(a)f$ if no confusion arises.
\item The Jones projection with respect to $E$ is denoted by $e\in\cL(\cE_{E})$.
(That is, $e\eta(a):=\eta(E(a))$ for all $a\in A$.)
\item The set of compact operators of $\cE_{E}$ is denoted by $\cK(\cE_{E})$ or $A_1$.
We have $A_1=\overline{\text{span}}\{aeb\mid a, b\in A\}\subset\cL(\cE_{E})$.
\item For a compact group $G$,
let $\lambda$ be the left regular representation, $\widehat{G}$ (resp. ${\rm Rep}_{\rm f}(G)$) be the set of all equivalence classes of irreducible (resp. finite dimensional) representations, $1_{G}$ be the unit of $G$, and $1_{\widehat{G}}$ be the trivial representation.
In this paper, the measure on G is always the normalized Haar measure.
\item For any $\epsilon>0$ and any elements $a$, 
$b$ of a normed space $X$,
we write $a\approx_{\epsilon}b$ for $\|a-b\|<\epsilon$.
\end{itemize}

\subsection{Finite index inclusions of simple \Cs-algebra}\label{sec fin}
The Watatani index of unital inclusions of \Cs-algebras is introduced by Watatani in \cite{W}. 
We introduce the definition of the Watatani indices for nonunital inclusions of \Cs-algebras based on \cite{I1}.
Let $B\subset A$ be a non-degenerate inclusion of $\sigma$-unital \Cs-algebras with a conditional expectation $E$ from $A$ onto $B$.
\begin{defn}
The Pimsner-Popa index $\mathrm{Ind}_{\mathrm{p}}\:E$ of $E$ is defined as follows:
\[
\mathrm{Ind}_{\mathrm{p}}\:E:=\inf\{\lambda>0\mid\lambda E-\id_{A}\text{ is completely positive.}\}.
\]
\end{defn}
\begin{defn}[Theorem 2.8 of \cite{I1}]\label{def_Watatani}
When $\mathrm{Ind}_{\mathrm{p}}\:E<\infty$ and $A$ is contained in $A_1$ as a subalgebra of $\cL(\cE_{E})$,
there is a bounded completely  positive $A$-$A$-bimodule map $\hat{E}$ from $\cL(\cE_{E})$ onto $M(A)$ such that $\hat{E}(e)=1$ and $\hat{E}(1)\in Z(M(A))$.
The Watatani index of $E$ is defined by 
\begin{equation*}
\mathrm{Ind}_{\mathrm{w}}\: E:=
\begin{cases}
\hat{E}(1)&\text{if $A\subset A_1$, }\\
\infty&\text{otherwise.}
\end{cases}
\end{equation*}
\end{defn}
\begin{defn}\label{quasi basis}
If there exists a family $\{(v_{i}, u_{i})\}_{i=1}^{n}$ in $M(A)\times M(A)$ such that $x=\sum_{i=1}^{n}E(xv_{i})u_{i}=\sum_{i=1}^{n}v_{i}E(u_{i}x)$ for every $x\in A$,
then $\{(v_{i}, u_{i})\}_{i=1}^{n}$ is called a quasi basis of $E$.
\end{defn}

In general,
if there is a quasi basis $\{(v_{i}, u_{i})\}_{i=1}^{n}$,
then $1=\sum_{i=1}^n v_i e u_i$ holds in $\cL(\cE_{E})$ and $\mathrm{Ind}_{\mathrm{p}}\:E\leq\|\sum_{i=1}^n v_i u_i\|$ according to Proposition 2.6.2 of \cite{W}.
Consequently,
the Watatani index is given by 
$\mathrm{Ind}_{\mathrm{w}}\:E = \hat{E}(\sum_{i=1}^n v_i e u_i)=\sum_{i=1}^n v_i u_i$.
When an inclusion $B\subset A$ is unital,
the existence of a quasi basis $\{(v_{i}, u_{i})_{i=1}^{n}\}$ is equivalent to $E$ having a finite Watatani index by Proposition 2.1.5 of \cite{W} and Definition \ref{def_Watatani}.
On the other hand,
if $A$ and $B$ are non-unital,
having a finite Watatani index does not imply the existence of a quasi basis \cite[Example 2.15]{I1}.
However,
the following theorem is true for inclusions of simple stable \Cs-algebras.

\begin{thm}[Proposition 3.6 of \cite{I1}]\label{p=e}
If $A$ and $B$ are simple stable \Cs-algebras and we have $\mathrm{Ind}_{\mathrm{p}}\: E=d<\infty$,
then there is an isometry $W\in M(A)$ such that the pair $(d^{\frac{1}{2}}W^{*}, d^{\frac{1}{2}}W)$ is a quasi basis of $E$.
In particular,
in this case,
we get $A\subset A_1$ and $\mathrm{Ind}_{\mathrm{p}}\:E=\mathrm{Ind}_{\mathrm{w}}\:E$.
\end{thm}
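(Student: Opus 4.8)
The plan is to dispose of the formal parts, reduce the statement to the construction of a single isometry, and then build it using stability.

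First, the set $\{\lambda>0 : \lambda E-\mathrm{id}_A\text{ is completely positive}\}$ is an interval (if $\lambda E-\mathrm{id}_A$ is completely positive and $\mu>\lambda$, then $\mu E-\mathrm{id}_A=(\lambda E-\mathrm{id}_A)+(\mu-\lambda)E$) and closed (a point-norm limit of completely positive maps is completely positive), so the infimum is attained: $dE-\mathrm{id}_A$ is completely positive. Next, $E$ is faithful, because $N_E:=\{a\in A:E(a^*a)=0\}$ is a closed two-sided ideal — a left ideal always, and a right ideal since $E(a^*b^*ba)\le\|b\|^2E(a^*a)$ — hence $N_E=0$ by simplicity of $A$, so $\eta$ is injective. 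I will also use the elementary fact that $\phi(c)e\in\cK(\cE_E)$ for every $c\in A$: from $e\phi(c^*c)e=\phi(E(c^*c))e$ and $\|(1-e_\nu)E(c^*c)\|\to0$ for an approximate unit $(e_\nu)$ of $A$, one gets $\|\phi(c)e(1-\phi(e_\nu))\|^2\le\|\phi((1-e_\nu)E(c^*c))\|\to0$, while $\phi(c)e\phi(e_\nu)=\theta_{\eta(c),\eta(e_\nu)}\in\cK(\cE_E)$. Granting these, it suffices to produce an isometry $W\in M(A)$ with $x=d\,E(xW^*)W$ for all $x\in A$. Indeed, taking adjoints gives $x=d\,W^*E(Wx)$ for all $x$, so $(d^{1/2}W^*,d^{1/2}W)$ is a quasi basis of $E$, whence $1=d\,\phi(W)^*e\,\phi(W)$ in $\cL(\cE_E)$; therefore $\phi(a)=d\,\phi(aW^*)\,e\,\phi(W)\in\cK(\cE_E)=A_1$ (using $aW^*\in A$ and that $\cK(\cE_E)$ is an ideal), so $A\subset A_1$ and $\hat E$ is defined, and applying the $M(A)$-bimodule map $\hat E$ to $1=d\,\phi(W)^*e\,\phi(W)$ gives $\mathrm{Ind}_{\mathrm w}E=\hat E(1)=d\,W^*W=d=\mathrm{Ind}_{\mathrm p}E$.

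Now fix an isometry $W\in M(A)$ and put $p:=WW^*\in M(A)$, a projection Murray--von Neumann equivalent to $1$, and $q:=\phi(p)=\phi(W)\phi(W)^*\in\cL(\cE_E)$. Multiplying by $\phi(W)$ and $\phi(W)^*$ shows $d\,\phi(W)^*e\,\phi(W)=1$ is equivalent to $qeq=d^{-1}q$; and since $\phi(p)e\phi(p)\eta(x)=\eta(pE(px))$ and $\eta$ is injective, the latter is equivalent to $p\,E(py)=d^{-1}\,py$ for all $y\in A$. So, using the first paragraph, the theorem reduces to finding a projection $p\in M(A)$, equivalent to $1$, with $p\,E(py)=d^{-1}\,py$ for all $y$: the equivalence $p\sim1$ then furnishes the required isometry $W$ with $WW^*=p$, since $A$ is stable, so $M(A)\cong M(A\otimes\cK)$ contains a unital copy of $B(\ell^2)$ and is properly infinite (here $p\ne1$ is automatic, as $qeq=d^{-1}q$ with $0\le e\le1$ is impossible for $q=1$ when $d>1$; and $d=1$ forces $A=B$ by faithfulness of $E$, where $W=1$ works).

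The construction of such a $p$ is the heart of the matter, and the main obstacle; this is where complete positivity of $\Theta:=dE-\mathrm{id}_A$ and stability of $A$ and $B$ are used essentially, the exact constant $d^{-1}$ being forced by the minimality of $d$. No naive compression estimate such as $\phi(a)^*\phi(a)\le d\,\phi(a)^*e\,\phi(a)$ is available — it would amount to a reverse Kadison--Schwarz inequality for $E$ — so the hypothesis on $\Theta$ must be exploited through a dilation rather than a compression. The plan is to build $p$ as the norm-limit of a sequence of projections $p_n\in M(A)$ with $p_n\sim1$, $\|p_{n+1}-p_n\|<2^{-n}$, and $\|p_nE(p_na_k)-d^{-1}p_na_k\|<2^{-n}$ for $k\le n$, where $(a_n)$ is a dense sequence in $A$; a routine estimate using $\|E\|\le1$ then shows the limit $p$ satisfies $pE(py)=d^{-1}py$ for all $y$, and $p\sim1$ since $\|p-p_n\|<1$ eventually. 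The inductive perturbation step — improving $p_n$ on one more element of $(a_n)$ while moving it by less than $2^{-n}$ and keeping it equivalent to $1$ — is where $\Theta\ge0$ supplies just enough ``room'', where simplicity of $B$ guarantees this room is uniformly available (no trace or character in the way), and where stability is used to keep the relevant corners properly infinite throughout the induction and to arrange $p_n\sim1$. Once $p$ is produced, the first two paragraphs complete the proof; in particular $A\subset A_1$ and $\mathrm{Ind}_{\mathrm p}E=\mathrm{Ind}_{\mathrm w}E$.
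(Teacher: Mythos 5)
Your first two paragraphs are sound reductions (with one slip: $\{a\in A:E(a^*a)=0\}$ is in general only a closed \emph{left} ideal --- the inequality you invoke, $E(a^*b^*ba)\le\|b\|^2E(a^*a)$, proves closure under left multiplication, and null spaces of conditional expectations need not be right ideals; faithfulness should instead be read off directly from $a^*a\le d\,E(a^*a)$, which you already have from complete positivity of $dE-\id_A$). The real problem is that the proof stops exactly where the theorem begins. Your third paragraph concedes that producing the projection $p\sim 1$ with $pE(py)=d^{-1}py$ is ``the heart of the matter'' and then supplies only a plan: a sequence of projections $p_n\sim 1$ satisfying the relation approximately on a dense sequence, with the inductive step attributed to unspecified ``room'' coming from $\Theta=dE-\id_A\ge 0$, simplicity of $B$, and stability. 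No base case is constructed, no perturbation lemma is stated or proved, and it is far from clear that such a scheme can work: the relation $pE(py)=d^{-1}py$ is extremely rigid (letting $y$ run through an approximate unit already forces $pE(p)p=d^{-1}p$, so $E$ must act like the constant $d^{-1}$ on the corner $pAp$), and nothing in your text produces even one projection approximately satisfying it, nor explains why an approximate solution can be improved without moving it much. As written this is an outline with the essential step missing, not a proof.

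For comparison: the paper does not reprove this statement but quotes it from Izumi \cite{I1}, Proposition 3.6, where the isometry is obtained through the basic construction rather than by successive perturbations. The finite Pimsner--Popa index gives $A\subset A_1=\cK(\cE_E)$; $A_1$ is simple and stable because it is Morita equivalent to the simple stable $B$ (the corner $eA_1e\cong B$ is full and stable); Brown's stabilization theorem (Theorem \ref{Brown}) then yields an isometry $v\in M(A_1)$ with $vv^*=e$, and the dual completely positive map $\hat{E}$ of Definition \ref{def_Watatani} transports this to the isometry $W\in M(A)$ with the quasi-basis property, the constant $d$ entering through $\hat{E}(e)=1$ and $\hat{E}(1)=\mathrm{Ind}_{\mathrm{w}}\,E$. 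These are the ingredients your inductive step would have to replace; in particular Brown's theorem is where simplicity and stability genuinely enter, and it is not a small-perturbation statement. Either carry out your perturbation scheme in full (which I do not believe is routine) or rebuild the argument along the lines above.
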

Due to the above theorem,
we write $\mathrm{Ind}\:E$ instead of $\mathrm{Ind}_{\mathrm{w}}\:E$  if $A$ and $B$ are simple and stable. 

\begin{exa}[Corollary 3.12 of \cite{I1}]\label{finitegroup}
    Let $\Gamma\acts A$ be an outer action of a finite group on a simple \Cs-algebra and $E\colon A\rightarrow A^{G}$ be the canonical conditional expectation.
    Then,
    the Watatani index is given by ${\rm Ind}_{\rm w}\: E=|\Gamma|$ and we have $A_{1}=A\rtimes\Gamma$.
    In this case,
    the completely positive map $\hat{E}$ satisfies
    \[
    \hat{E}(\sum_{s\in\Gamma}a_{s}\lambda_{s})=|\Gamma|a_{1_{\Gamma}}.
    \]
    This equation is trivial for $\sum_{s\in\Gamma}a_{s}\lambda_{s}=aeb$,
    with $a,b\in A$ and $e=\frac{1}{|\Gamma|}\sum_{s}\lambda_{s}$.
    Since we have $A_{1}=\overline{\rm span}\:AeA$,
    the above equation holds for every $\sum_{s\in\Gamma}a_{s}\lambda_{s}\in A_{1}$. 
\end{exa}

Next,
we explain finite index endomorphisms and sectors.
(See section 4 of \cite{I1} for details.)
Let $A$ and $B$ be stable simple \Cs-algebras.
We use the following notations.
\begin{itemize}
\item A $*$-homomorphism $\rho\colon A\rightarrow B$ is called irreducible,
if the relative commutant $M(B)\cap\rho(A)^{\prime}$ is trivial.
\item A $*$-homomorphism $\rho\colon A\rightarrow B$ is said to have a finite index if there exists a finite index conditional expectation $E\colon B\rightarrow\rho(A)$.
It is known that there is a unique conditional expectation $E_{\rho}\colon B\rightarrow\rho(A)$ which has a minimal index
(see Theorem 2.12.3 of \cite{W} and Section 3 of \cite{I1}).
The square root $\sqrt{\mathrm{Ind}_{\mathrm{w}}\: E_{\rho}}$ of the minimal index is denoted by $d(\rho)$.
\item  An intertwiner space 
\[
\{T\in M(B)\mid T\rho_{1}(x)=\rho_{2}(x)T,\mathrm{\ for\ all\ } x\in A\}
\]
of $*$-homomorphisms $\rho_{1}, \rho_{2}\colon A\rightarrow B$ is denoted by $(\rho_{1}, \rho_{2})$.
If there is a unitary $U\in(\rho_{1}, \rho_{2})$,
then two $*$-homomorphisms $\rho_{1}$ and $\rho_{2}$ are said to be equivalent and denoted by $\rho_{1}\sim\rho_{2}$.
\item We write $[\rho]$ for the equivalence class of an endomorphism $\rho\colon B\rightarrow B$.
\item The set of equivalence classes $\mathrm{End}(B)/\sim$ is denoted by $\mathrm{Sect}(B)$.
Since $B$ is stable,
there are isometries $S_{1}, S_{2}\in M(B)$ with $S_{1}S_{1}^{*}+S_{2}S_{2}^{*}=1_{M(B)}$.
The product and the sum of $[\rho_1]$ and $[\rho_2]$ in $\mathrm{Sect}(B)$ are defined by $[\rho_{1}][\rho_{2}]:=[\rho_{1}\circ\rho_{2}]$ and $[\rho_{1}]\oplus[\rho_{2}]:=[\rho]$,
where $\rho$ is an endomorphism of $B$ such that
\[
\rho(x)=S_{1}\rho_{1}(x)S_{1}^{*}+S_{2}\rho_{2}(x)S_{2}^{*}
\] for all $x\in B$.
\end{itemize}

By Lemma 4.1 of \cite{I1},
every finite index endomorphism of a $\sigma$-unital simple stable \Cs-algebra is decomposed into a direct sum of irreducible endomorphisms.
For any finite index $*$-homomorphism $\rho\colon A\rightarrow B$ between $\sigma$-unital simple stable \Cs-algebras,
there is a conjugate $*$-homomorphism $\overline{\rho}\colon B\rightarrow A$ as follows.
\begin{lem}[Lemma 4.4 of \cite{I1}]\label{conjugate}
Let $\rho\colon A\rightarrow B$ be a finite index $*$-homomorphism between $\sigma$-unital simple stable \Cs-algebras.
Then there is a finite index endomorphism $\overline{\rho}$,
unique up to equivalence,
such that the following holds:
there exist isometries $R_{\rho}\in(\id_{A}, \overline{\rho}\circ\rho)$ and $\overline{R}_{\rho}\in(\id_{B}, \rho\circ\overline{\rho})$ such that
\[
\overline{R}_{\rho}^{*}\rho(R_{\rho})=R_{\rho}^{*}\overline{\rho}(\overline{R}_{\rho})=\frac{1}{d(\rho)}.
\]
Moreover,
$d(\rho)=d(\overline{\rho})$ holds,
and for any $*$-homomorphisms $\pi_{1}:A\rightarrow C$ and $\pi_{2}:B\rightarrow C$,
we have a linear isomorphism
\[
(\pi_{1}, \pi_{2}\circ\rho)\ni T\mapsto \pi_{2}(\overline{R}^{*}_{\rho})T\in(\pi_{1}\circ\overline{\rho}, \pi_{2}).
\]
\end{lem}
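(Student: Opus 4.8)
The plan is to realise $\overline\rho$ through the basic construction of the single inclusion $\rho(A)\subset B$ and to extract the conjugate equations from the structure theory recalled in Section~\ref{sec fin}. Since $A$ is simple and $\rho$ has finite index, $\rho$ is injective, so I identify $A$ with $\rho(A)$ and work with $\rho(A)\subset B$ together with its minimal conditional expectation $E:=E_{\rho}\colon B\to\rho(A)$, of index $d_{0}:=d(\rho)^{2}$. Form the Hilbert $\rho(A)$-module $\cE_{E}$, the Jones projection $e$, and $B_{1}:=\cK(\cE_{E})=\overline{\mathrm{span}}\,BeB\supset B$, with the dual conditional expectation $E_{1}\colon B_{1}\to B$, which satisfies $E_{1}(e)=d_{0}^{-1}$. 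By Theorem~\ref{p=e} applied to $\rho(A)\subset B$ there is an isometry $W\in M(B)$ such that $(d_{0}^{1/2}W^{*},d_{0}^{1/2}W)$ is a quasi basis; hence $b=d_{0}E(bW^{*})W=d_{0}W^{*}E(Wb)$ for $b\in B$ and $1=d_{0}W^{*}eW$ in $\cL(\cE_{E})$ (so $e\in B_{1}$). Now $\cE_{E}$ is full over $\rho(A)$ (as $E$ is onto), so $B_{1}=\cK(\cE_{E})$ is Morita equivalent to $\rho(A)\cong A$; in particular $B_{1}$ is simple. It is $\sigma$-unital because $\cE_{E}$ is countably generated, and it is stable because $B$ is stable (so $\cE_{E}$ absorbs a countable direct sum of copies of itself, whence $\cK(\cE_{E})$ is stable). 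Since $B_{1}$ and $\rho(A)\cong A$ are $\sigma$-unital, stable and Morita equivalent, the Brown--Green--Rieffel theorem yields a $*$-isomorphism $\Psi\colon B_{1}\to A$. I set $\overline\rho:=\Psi\circ\iota\colon B\to A$, where $\iota\colon B\hookrightarrow B_{1}$ is the inclusion.

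The index claim reduces to the inclusion $B\subset B_{1}$: $\overline\rho$ has finite index because $E_{1}$ is a finite-index expectation onto $B$, and $d(\overline\rho)=d(\rho)$ provided $E_{1}$ is the minimal expectation $B_{1}\to B$ with $\mathrm{Ind}\,E_{1}=d_{0}$. For this I would transport the quasi basis of $E$, through $e$ and $W$, to a quasi basis of $E_{1}$ of index $d_{0}$, and deduce minimality of $E_{1}$ from that of $E$ together with the uniqueness of the minimal expectation (Theorem~2.12.3 of \cite{W}); then $d(\overline\rho)=\sqrt{d_{0}}=d(\rho)$.

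For the conjugate equations I would imitate Longo's canonical-endomorphism construction. After the identification via $\Psi$, the composite $\overline\rho\circ\rho$ is the corestricted embedding $\rho(A)\subset B_{1}$, which contains $\id_{A}$; the Jones projection $e$ commutes with $\rho(A)$, and, transported by $\Psi$ and suitably rescaled, gives an isometry $R_{\rho}\in(\id_{A},\overline\rho\rho)$, while the isometry $W$ of Theorem~\ref{p=e} gives an isometry $\overline R_{\rho}\in(\id_{B},\rho\overline\rho)$. The normalisation $\overline R_{\rho}^{*}\rho(R_{\rho})=R_{\rho}^{*}\overline\rho(\overline R_{\rho})=1/d(\rho)$ is then read off from $1=d_{0}W^{*}eW$, $b=d_{0}W^{*}E(Wb)$ and $E_{1}(e)=d_{0}^{-1}$, using the minimality of $E$ and $E_{1}$. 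I expect this to be the main obstacle: the isomorphism $\Psi$ must be chosen carefully — equivalently, $\overline\rho$, $R_{\rho}$ and $\overline R_{\rho}$ should be built simultaneously — so that the two scalars come out equal and exactly $d(\rho)^{-1}$; by contrast, the existence of $\overline\rho$ and the finiteness of its index are comparatively soft.

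Finally, uniqueness up to equivalence and the displayed isomorphism are formal consequences of the conjugate equations. Given another solution $(\overline\rho',R',\overline R')$, the standard rigidity argument produces a unitary in $(\overline\rho,\overline\rho')$, so $\overline\rho\sim\overline\rho'$. For the displayed map, if $T\in(\pi_{1},\pi_{2}\circ\rho)$ then for $b\in B$
\[
\bigl(\pi_{2}(\overline R_{\rho}^{*})T\bigr)\pi_{1}(\overline\rho(b))=\pi_{2}(\overline R_{\rho}^{*})\,\pi_{2}\bigl(\rho(\overline\rho(b))\bigr)\,T=\pi_{2}(b)\,\pi_{2}(\overline R_{\rho}^{*})\,T ,
\]
using $T\in(\pi_1,\pi_2\rho)$ and $\overline R_{\rho}^{*}\rho(\overline\rho(b))=b\overline R_{\rho}^{*}$; hence $\pi_{2}(\overline R_{\rho}^{*})T\in(\pi_{1}\circ\overline\rho,\pi_{2})$. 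The inverse is $S\mapsto d(\rho)\,S\,\pi_{1}(R_{\rho})$, which lands in $(\pi_{1},\pi_{2}\circ\rho)$ because $R_{\rho}\in(\id_{A},\overline\rho\rho)$ and $S\in(\pi_1\circ\overline\rho,\pi_2)$, and one checks that both composites are the identity directly from $\overline R_{\rho}^{*}\rho(R_{\rho})=R_{\rho}^{*}\overline\rho(\overline R_{\rho})=1/d(\rho)$.
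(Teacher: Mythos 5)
You are trying to prove a statement the paper itself does not prove: it simply cites Izumi's Lemma 4.4 of \cite{I1}, so the comparison is with Izumi's argument. Your soft steps are fine (injectivity of $\rho$, simplicity, $\sigma$-unitality and stability of $B_{1}=\cK(\cE_{E})$, the check that the two Frobenius maps $T\mapsto \pi_{2}(\overline{R}_{\rho}^{*})T$ and $S\mapsto d(\rho)S\pi_{1}(R_{\rho})$ are mutually inverse, and uniqueness by the standard rigidity argument — all of these only use the conjugate equations). The genuine gap is exactly the part you defer: the construction of $\overline{\rho}$, $R_{\rho}$, $\overline{R}_{\rho}$ satisfying the conjugate equations, and the problem is worse than "choosing $\Psi$ so the two scalars come out equal". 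With an arbitrary Brown--Green--Rieffel isomorphism $\Psi\colon B_{1}\to A$ the conjugate equations may have \emph{no} solution at all: already for $\rho=\id_{A}$ one has $\cE_{E}=A$ and $B_{1}\cong A$, so $\overline{\rho}=\Psi$ is an arbitrary automorphism of $A$, and $(\id_{A},\overline{\rho}\circ\rho)=\{T\in M(A)\mid Ta=\Psi(a)T\}$ vanishes whenever $\Psi$ is properly outer. Thus the equivalence class of $\Psi\circ\iota$ really depends on the choice of $\Psi$, and producing the isometries is the content of the lemma, not a normalization to be fixed afterwards.

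The missing idea — which is what Izumi actually does in \cite{I1} (Lemmas 4.2--4.4), and which this paper mirrors in Lemma \ref{canonical} for the crossed product — is to take the isomorphism implemented by the Jones projection itself: the corner $eB_{1}e\cong\rho(A)$ is full and stable, so Theorem \ref{Brown} gives an isometry $v\in M(B_{1})$ with $vv^{*}=e$, and one defines $\gamma_{1}\colon B_{1}\to\rho(A)$ by $\gamma_{1}(x)e=vxv^{*}$ and sets $\overline{\rho}:=\rho^{-1}\circ\gamma_{1}|_{B}$. Then $R_{\rho}$ and $\overline{R}_{\rho}$ are explicit words in $v$, $e$ and the quasi-basis isometry $W$ of Theorem \ref{p=e}, and the equations $\overline{R}_{\rho}^{*}\rho(R_{\rho})=R_{\rho}^{*}\overline{\rho}(\overline{R}_{\rho})=d(\rho)^{-1}$ and $d(\overline{\rho})=d(\rho)$ follow by computation from $exe=E(x)e$, $\hat{E}(e)=1$ and minimality of $E$; none of this is recoverable from an abstract Morita isomorphism. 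Two smaller corrections: in this stable, nonunital setting $e\notin B_{1}$ (only $e\in\cL(\cE_{E})=M(B_{1})$; what the identity $1=d_{0}W^{*}eW$ actually yields is $B\subset B_{1}$), and the minimality of the dual expectation $E_{1}$, which you need for $d(\overline{\rho})=d(\rho)$, is asserted rather than proved.
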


\subsection{Compact group actions on non-unital \Cs-algebras}
In this subsection,
we discuss the following general properties of compact group actions on \Cs-algebras.
\begin{lem}
A norm continuous action $\alpha$ of a compact group $G$ on a \Cs-algebra $A$ extends to a strict continuous action of $G$ on $M(A)$.  
\end{lem}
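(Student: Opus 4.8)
The plan is to extend $\alpha$ from $A$ to $M(A)$ via the standard strict-density argument and then verify that the resulting maps assemble into a strictly continuous action. First I would fix $g \in G$ and define $\alpha_g$ on $M(A)$ as follows: for $m \in M(A)$, the net (or rather the assignment) $a \mapsto \alpha_g(m\,\alpha_{g^{-1}}(a))$ and $a \mapsto \alpha_g(\alpha_{g^{-1}}(a)\,m)$ defines a double centralizer on $A$, hence an element $\tilde\alpha_g(m) \in M(A)$; one checks this is the unique element with $\tilde\alpha_g(m)\,a = \alpha_g\bigl(m\,\alpha_{g^{-1}}(a)\bigr)$ and $a\,\tilde\alpha_g(m) = \alpha_g\bigl(\alpha_{g^{-1}}(a)\,m\bigr)$ for all $a \in A$. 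It is routine that $\tilde\alpha_g$ is a $*$-homomorphism, that $\tilde\alpha_g$ restricts to $\alpha_g$ on $A \subset M(A)$, and that $\tilde\alpha_g \circ \tilde\alpha_h = \tilde\alpha_{gh}$ and $\tilde\alpha_{1_G} = \id$, so each $\tilde\alpha_g$ is an automorphism of $M(A)$ with inverse $\tilde\alpha_{g^{-1}}$; I would state these but not belabor them.

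The substantive point is strict continuity: for fixed $m \in M(A)$ and fixed $a \in A$, I must show $g \mapsto \tilde\alpha_g(m)\,a$ and $g \mapsto a\,\tilde\alpha_g(m)$ are norm-continuous maps $G \to A$. Using $\tilde\alpha_g(m)\,a = \alpha_g\bigl(m\,\alpha_{g^{-1}}(a)\bigr)$, I would estimate, for $g$ close to $h$,
\[
\norm{\tilde\alpha_g(m)a - \tilde\alpha_h(m)a} \leq \norm{\alpha_g\bigl(m\,\alpha_{g^{-1}}(a)\bigr) - \alpha_g\bigl(m\,\alpha_{h^{-1}}(a)\bigr)} + \norm{\alpha_g\bigl(m\,\alpha_{h^{-1}}(a)\bigr) - \alpha_h\bigl(m\,\alpha_{h^{-1}}(a)\bigr)}.
\]
The first term is bounded by $\norm{m}\,\norm{\alpha_{g^{-1}}(a) - \alpha_{h^{-1}}(a)}$, which is small by norm-continuity of the original action; the second term is small because, with $b := m\,\alpha_{h^{-1}}(a) \in A$ a fixed element, $g \mapsto \alpha_g(b)$ is norm-continuous. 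The right-multiplication case is symmetric. Hence $g \mapsto \tilde\alpha_g(m)$ is strictly continuous.

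The main obstacle, such as it is, is purely bookkeeping: one must be careful that the two prescriptions $\tilde\alpha_g(m)\,a$ and $a\,\tilde\alpha_g(m)$ are \emph{compatible} in the sense required to define a genuine multiplier (i.e. $b^*(\tilde\alpha_g(m)a) = (b^*\tilde\alpha_g(m))a$ for all $a,b \in A$), which follows from applying $\alpha_g$ to the multiplier identity $b^*(m c) = (b^* m)c$ in $A$ with $c = \alpha_{g^{-1}}(a)$. There is no deep difficulty here — the only mild subtlety is keeping the $g^{-1}$'s in the right places so that the cocycle/homomorphism identities come out correctly and so that the continuity estimate above genuinely reduces to continuity of the original action on norm-convergent quantities. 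I would close by remarking that strict continuity of $g \mapsto \tilde\alpha_g$ is exactly the assertion that $\alpha$ extends to a strictly continuous action on $M(A)$, completing the proof.
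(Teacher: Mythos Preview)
Your proof is correct and follows essentially the same approach as the paper: the paper cites Pedersen's Proposition 3.12.10 for the extension of each $\alpha_g$ to a strictly continuous $*$-automorphism of $M(A)$ and then says the strict continuity of $g \mapsto \tilde\alpha_g(m)$ follows from ``straightforward computations,'' which are precisely the estimates you wrote out explicitly. Your version simply unpacks both steps rather than citing the reference.
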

\begin{proof}
By Proposition 3.12.10 of \cite{Ped},
each $\alpha_{g}$ extends to a strictly continuous $*$-homomorphism between $M(A)$.
By straightforward computations,
the map $G\ni g\mapsto\alpha_{g}(x)\in M(A)$ is continuous with respect to the strict topology for any $x\in M(A)$.
\end{proof}
In this article,
suppose that the extended action $G\acts M(A)$ is also denoted by $\alpha$.
\begin{lem}[see Lemma 2.6 \cite{I1}]
Let $\alpha\colon G\acts A$ be an action of a compact group,
then the following hold.
\begin{itemize}
\item[(1)] We have the inclusion $M(A^{G})\subset M(A)$ as \Cs-subalgebras of $A^{**}$.
\item[(2)] The canonical conditional expectation $E\colon A\rightarrow A^{G}$ extends to the strictly continuous conditional expectation from $M(A)$ onto $M(A^{G})$.
\item[(3)] We have $M(A)^{G}=M(A^{G})$.
\end{itemize}
\end{lem}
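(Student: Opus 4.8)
The plan is to first establish the one non-formal ingredient, namely that the inclusion $A^{G}\subset A$ is non-degenerate, and then to deduce (1)--(3) from the idealizer description of multiplier algebras inside $A^{**}$ together with the averaging integral $x\mapsto\int_{G}\alpha_{g}(x)\,dg$. So first I would check that $E$ sends an approximate unit of $A$ to an approximate unit of $A$ lying in $A^{G}$, whence $\overline{A^{G}A}=A$. Given an approximate unit $(f_{\lambda})$ of $A$ with $\|f_{\lambda}\|\le 1$ and $a\in A$, the orbit $\{\alpha_{g}(a)\mid g\in G\}$ is norm-compact, so a finite-cover argument gives $\sup_{g\in G}\|f_{\lambda}\alpha_{g}(a)-\alpha_{g}(a)\|\to 0$; since $\alpha_{g}(f_{\lambda})a=\alpha_{g}\bigl(f_{\lambda}\alpha_{g^{-1}}(a)\bigr)$ and $\alpha_{g}$ is isometric, this yields $\sup_{g}\|\alpha_{g}(f_{\lambda})a-a\|\to 0$, hence $\|E(f_{\lambda})a-a\|\le\int_{G}\|\alpha_{g}(f_{\lambda})a-a\|\,dg\to 0$ and symmetrically $\|aE(f_{\lambda})-a\|\to 0$; and $E(f_{\lambda})\in A^{G}$ by left invariance of the Haar measure.

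For (1) I would use that for any non-degenerate inclusion $B\subset A$, each element $x$ of the idealizer $M(B)=\{x\in B^{**}\mid xB+Bx\subseteq B\}\subseteq A^{**}$ satisfies $xA=x\overline{BA}\subseteq\overline{BA}=A$ and likewise $Ax\subseteq A$, so that $M(B)$ is a unital \Cs-subalgebra of $M(A)$ inside $A^{**}$; taking $B=A^{G}$ gives (1). I would also record the sharper statement that
\[
M(A^{G})=\{\,y\in M(A)\mid yA^{G}+A^{G}y\subseteq A^{G}\,\}:
\]
if $y\in M(A)$ multiplies $A^{G}$ into itself on both sides, then it determines an element of $M(A^{G})$ with the same action on $A^{G}$, and by non-degeneracy of $A^{G}\subset A$ this element coincides with $y$ in $A^{**}$.

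Next, for (2) and (3): by the preceding lemma $g\mapsto\alpha_{g}(x)$ is strictly continuous for $x\in M(A)$, and $G$ is compact, so $E(x):=\int_{G}\alpha_{g}(x)\,dg$ defines an element of $M(A)$ with $\|E(x)\|\le\|x\|$ (equivalently, $E(x)a=\int_{G}\alpha_{g}(x)a\,dg$ and $aE(x)=\int_{G}a\alpha_{g}(x)\,dg$, the integrands being norm-continuous $A$-valued functions). This $E$ is positive since each $\alpha_{g}$ is, it restricts to the original conditional expectation on $A$, and for $b\in A^{G}$ one has $E(x)b=\int_{G}\alpha_{g}(xb)\,dg=E(xb)\in A^{G}$ (as $\alpha_{g}(b)=b$) and symmetrically $bE(x)\in A^{G}$; by the description of $M(A^{G})$ above, $E(x)\in M(A^{G})$, so $E$ is a conditional expectation from $M(A)$ onto $M(A^{G})$ extending the given one. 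For strict continuity: if $x_{i}\to x$ strictly in $M(A)$ then, for $b\in A^{G}$, $\|E(x_{i})b-E(x)b\|=\|E\bigl((x_{i}-x)b\bigr)\|\le\|(x_{i}-x)b\|\to 0$, and likewise on the other side, which proves (2). For (3): if $x\in M(A)^{G}$ then $E(x)=\int_{G}\alpha_{g}(x)\,dg=x\in M(A^{G})$; conversely, for $y\in M(A^{G})$ and $b\in A^{G}$ we have $\alpha_{g}(y)b=\alpha_{g}\bigl(y\alpha_{g^{-1}}(b)\bigr)=\alpha_{g}(yb)=yb$ because $\alpha_{g^{-1}}(b)=b$ and $yb\in A^{G}$, and since $\alpha_{g}(y),y\in M(A)$ and $A^{G}$ acts non-degenerately on $A$ this forces $\alpha_{g}(y)=y$; hence $M(A)^{G}=M(A^{G})$.

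The one real obstacle is the non-degeneracy of $A^{G}\subset A$ in the first step; this is where compactness of $G$ genuinely enters, through the uniform approximation over the compact orbit of $a$ and the averaging over $G$. Once that is in hand, (1)--(3) are formal manipulations inside $A^{**}$ with idealizers and the invariant integral, and I expect no further difficulty.
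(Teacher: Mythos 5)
Your proof is correct, and in outline it matches the paper's: both arguments hinge on producing an approximate unit of $A$ inside $A^{G}$ (you supply the finite-cover/uniform-convergence details that the paper dismisses as trivial), and both realize the extension of $E$ as the invariant average $\int_{G}\alpha_{g}(x)\,dg$. The one genuine divergence is in how the extension is shown to map $M(A)$ into, and onto, $M(A^{G})$. The paper restricts the bitransposed map $E^{**}\colon A^{**}\rightarrow(A^{G})^{**}$ to $M(A)$ and invokes Pedersen's Theorem 3.12.9, identifying $M(A)_{sa}$ with $(\tilde{A}_{sa})^{m}\cap(\tilde{A}_{sa})_{m}$, so that $E^{**}(M(A))=M(A^{G})$ follows from compatibility of $E^{**}$ with monotone limits. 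You instead stay at the \Cs-level: you characterize $M(A^{G})$ as the idealizer $\{y\in M(A)\mid yA^{G}+A^{G}y\subseteq A^{G}\}$ (using non-degeneracy to identify the abstract double centralizer of $A^{G}$ with $y$ itself in $A^{**}$) and check directly that the strictly convergent integral lands there. Your route is more self-contained---it needs only the strict continuity of $g\mapsto\alpha_{g}(x)$ from the preceding lemma and no bidual machinery---at the price of a few extra verifications (the double-centralizer identification, and the fact that $M(A^{G})\subset M(A)^{G}$ is needed to see that the extension fixes $M(A^{G})$, which is harmless since that inclusion is proved independently of (2)). Parts (1) and (3) are essentially identical in the two treatments.
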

\begin{proof}
For (1),
it suffices to construct approximate units $(a_{\lambda})_{\lambda}$ of $A$ which are contained in $A^{G}$.
This is trivial because ($E(x_{\lambda}))_\lambda$ are approximate unit of $A$ if $(x_{\lambda})_\lambda$ are approximate units of $A$.

For (2),
we show that the restriction $\tilde{E}$ of $E^{**}\colon A^{**}\rightarrow (A^{G})^{**}$ on $M(A)$ is a strictly continuous conditional expectation from $M(A)$ onto $M(A^{G})$. 
By Theorem 3.12.9 of \cite{Ped},
we have $M(A)_{sa}=(\tilde{A}_{sa})^{m}\cap(\tilde{A}_{sa})_{m}$.
Thus,
$E^{**}(M(A))=M(A^{G})$ holds.
The rest of the statement follows from a simple calculation.

Since $\varphi(E^{**}(x))=\int_{G}\varphi(\alpha_{g}(x))dg$ for any $x\in M(A)$ and $\varphi\in A^{*}$,
we have $M(A)^{G}\subset E^{**}(M(A))=M(A^{G})$.
The reverse inclusion follows from the strict continuity of each $\alpha_{g}\colon M(A)\rightarrow M(A)$.
\end{proof}

\subsection{\Cs-valued weights} 
Let $\Gamma\acts A$ be a finite group action and $E\colon A\rightarrow A^{\Gamma}$ be the canonical conditional expectation.
As in Example \ref{finitegroup},
the completely positive map $\hat{E}$ in Definition \ref{def_Watatani} can sometimes be expressed as a scalar multiple of the canonical conditional expectation from $A\rtimes\Gamma$ onto $A$.
For an infinite compact group action $G\acts A$,
the conditional expectation $A\rightarrow A^{G}$ does not have a finite Watatani index,
and the evaluation map $C(G,A)\ni f\mapsto f(1_{G})\in A$ does not extend to a bounded map from $A\rtimes G$. 
To discuss unbounded completely positive maps between \Cs-algebras,
we introduce the concept of a \Cs-valued weight.
The definition of a \Cs-valued weight is introduced by Kustermans as an analogue of an operator valued weight
(see \cite{H1} for details on operator valued weights).
\begin{defn}[Definition 1.1 of \cite{Ku}]
Consider two \Cs-algebras $A, B$ and a hereditary cone $P$ in $A_{+}$.
Put $\cN=\{a\in A\mid a^{*}a\in P\}$ and $\cM=\mathrm{span}\;P=\mathrm{span}\;\cN^{*}\cN$.
Suppose that $\varphi$ is a linear mapping from $\cM$ into $M(B)$ such that 
\[
\sum_{i,j=1}^{n}b_{j}^{*}\varphi(a_{j}^{*}a_{i})b_{i}\geq0
\]
for all $n\in\IN$ and all $a_{1},\dots, a_{n}\in \cN$, 
$b_{1},\dots, b_{n}\in B$.
Then we call $\varphi$ a \Cs-valued weight from $A$ into $M(B)$.
In this case,
$\cN, \cM$ are denoted by $\cN_{\varphi}, \cM_{\varphi}$.
\end{defn}
When a locally compact group $G$ acts on a von Neumann algebra $M$,
there is the canonical operator valued weight from $M\bar{\rtimes}G$ onto $M$
(see Theorem 3.1 of \cite{H3}).
Similarly,
there are canonical \Cs-valued weights associated with group actions.
\begin{exa}\label{dualweight}
Suppose $\alpha\colon G\acts A$ is a continuous action of a compact group $G$ on a \Cs-algebra.
Let $\mathcal{P}$ be the set of minimal projections of $\rg(G)\subset M(A\rtimes G)$.
It is well-known that $\rg(G)$ is of type I,
and for each $p\in \mathcal{P}$,
there is an irreducible representation $(\pi, V_{\pi})$ of $G$ and a vector $\xi\in V_{\pi}$ such that $p=\int_{G}\langle\pi(g)\xi,\xi\rangle\lambda_{g}dg$ holds.
Set an algebraic left ideal
\[
\cN_{\hat{E}}:=\mathrm{span}\{xpa\in M(A\rtimes G)\mid x\in M(A\rtimes G), p\in\mathcal{P}, a\in M(A)\}
\]
and a algebraic hereditary subalgebra
\[
\cM_{\hat{E}}:=\mathrm{span}\{bqxpa\in M(A\rtimes G)\mid x\in M(A\rtimes G), p, q\in \mathcal{P}, a,b\in M(A)\}.
\]
We get the canonical $\sigma$-weak continuous action 
$G\acts A_{\alpha}^{\prime\prime}$ on the von Neumann algebra,
where $A_{\alpha}^{\prime\prime}:=\{x\in A^{**}\mid G\ni g\mapsto\alpha^{**}_{g}(x)\in A^{**}\mathrm{\ is\ }\sigma\mathrm{-weak\ 
continuous}\}$.
Let $T$ be the canonical operator valued weight from $(A_{\alpha}^{\prime\prime}\bar{\rtimes}G)_{+}$ to $A_{\alpha+}^{\prime\prime}$.
We use the notations $\mathfrak{m}_{T}$ and $\mathfrak{n}_{T}$ as in Section 2 of \cite{H1}.
The map $T$ extends to a linear map from the domain $\mathfrak{m}_{T}\subset A_{\alpha}^{\prime\prime}\bar{\rtimes}G$ onto $A_{\alpha}^{\prime\prime}$. 
We claim that the restriction $\hat{E}$ of $T$ to the domain $\cM_{\hat{E}}$ is a \Cs-valued weight from $M(A\rtimes G)$ into $M(A)$.
To show this,
we have to prove $\cM_{\hat{E}}\subset\mathfrak{m}_{T}$ and $T(\cM_{\hat{E}})\subset M(A)$.
By Theorem 3.1 (c) of \cite{H3},
$pa$ is contained in $\mathfrak{n}_{T}$ for every $p\in\mathcal{P}$ and every $a\in M(A)$.
Since $\mathfrak{m}_{T}=\rm{span\;}\mathfrak{n}_{T}^{*}\mathfrak{n}_{T}$ is a hereditary subalgebra of $A_{\alpha}^{\prime\prime}\bar{\rtimes}G$,
we get $\cM_{\hat{E}}\subset\mathfrak{m}_{T}$.
Take $p,q\in \mathcal{P}$,
$x\in M(A\rtimes G)$,
$a,b\in M(A)$,
$c\in A$,
and $\epsilon>0$.
Since $ac\in A$,
there is an element $d\in A^{G}$ such that $bT(qxp)ac\approx_{\epsilon}bT(qxp)dac$.
Thanks to Section 2 (iii) of \cite{H1},
we get
\begin{align*}
    T(bqxpa)c
   =bT(qxp)ac
   \approx_{\epsilon}bT(qxp)dac
   =bT(q(xdp)p)ac.
\end{align*}
Since $xdp\in A\rtimes G$ and the linear map $A\rtimes G\ni y\mapsto T(qyp)\in A_{\alpha}^{\prime\prime}$ is continuous,
there is an element $x_{0}=\int_{G}f(g)\lambda_{g}dg$ with $f\in C(G,A)$ such that $bT(q(xdp)p)ac\approx_{\epsilon} bT(qx_{0}p)ac$.
By Lemma 2.5 of \cite{H2} 
and Theorem 3.1 (d) of \cite{H3},
we get $T(qx_{0}p)\in A$.
This implies $T(bqxpa)c\in A$ for every $c\in A$.
Similarly,
we get $cT(bqxpa)\in A$ for every $c\in A$ and $T(bqxpa)\in M(A)$.
This proves $T(\cM_{\hat{E}})\subset M(A)$.
Since $\sum_{i,j=1}^{n}b_{j}^{*}T(a_{j}^{*}a_{i})b_{i}=T((\sum_{j}a_{j}b_{j})^{*}(\sum_{i}a_{i}b_{i}))\geq0$ holds for all $a_{1},\dots, a_{n}\in \cN_{\hat{E}}$ and
$b_{1},\dots, b_{n}\in A$,
the restriction $\hat{E}:=T|_{\cM_{\hat{E}}}$ is a \Cs-valued weight from $M(A\rtimes G)$ to $M(A)$.
If $G$ is a finite group with the normalized Haar measure $\frac{1}{|G|}\sum_{g\in G}\delta_{g}$ and $G\acts A$ is an outer action on a simple \Cs-algebra,
then $\hat{E}$ is equal to the completely positive map in Definition \ref{def_Watatani} by Example 2.5 and Corollary 3.7 of \cite{H3}.
\end{exa}

\begin{lem}\label{lem dual}
Let $\hat{E}$ be the \Cs-valued weight from $\cM_{\hat{E}}$ to $M(A)$ defined in Example \ref{dualweight}.
Then, the following hold.
\begin{itemize}
\item[(1)] $\hat{E}(axb)=a\hat{E}(x)b$ for any $a, b\in M(A), x\in \cM_{\hat{E}}$ .
\item[(2)] Let $p_{1_{\widehat{G}}}\in\mathcal{P}$ be the averaging projection defined as $p_{1_{\widehat{G}}}:=\int_{G}\lambda_{g}dg$. 
We have $p_{1_{\widehat{G}}}\hat{E}(p_{1_{\widehat{G}}}y)=p_{1_{\widehat{G}}}y$ for all $y\in \cN_{\hat{E}}$.
\end{itemize}
\end{lem}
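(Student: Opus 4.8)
\emph{Sketch of proof.} The plan is to exploit that $\hat{E}$ is the restriction of the canonical operator-valued weight $T\colon A_{\alpha}^{\prime\prime}\bar{\rtimes}G\to A_{\alpha}^{\prime\prime}$ and that, on elements $\lambda(f):=\int_{G}f(g)\lambda_{g}\,dg$ with $f\in C(G,A)$, this weight is evaluation at the identity, $\hat{E}(\lambda(f))=f(1_{G})$ (this is the content of the computations behind Example \ref{dualweight}, resting on Theorem 3.1 of \cite{H3} and Lemma 2.5 of \cite{H2}, and in the finite case reducing to Example \ref{finitegroup}). Statement (1) is then immediate: since the extended action on $M(A)$ is strictly continuous we have $M(A)\subseteq A_{\alpha}^{\prime\prime}$, and $\cM_{\hat{E}}$ is by construction a two-sided $M(A)$-module, so $axb\in\cM_{\hat{E}}$ for $a,b\in M(A)$ and $x\in\cM_{\hat{E}}$; as operator-valued weights are bimodule maps over the smaller algebra (Section 2 (iii) of \cite{H1}), $\hat{E}(axb)=T(axb)=aT(x)b=a\hat{E}(x)b$.

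For (2) I would first record the key computation in $A\rtimes G$. Put $e_{0}:=p_{1_{\widehat{G}}}=\int_{G}\lambda_{g}\,dg$, so that $\lambda_{h}e_{0}=e_{0}=e_{0}\lambda_{h}$ and $e_{0}a=\int_{G}\alpha_{g}(a)\lambda_{g}\,dg$ for $a\in M(A)$. Given a minimal projection $p=\int_{G}\langle\pi(g)\xi,\xi\rangle\lambda_{g}\,dg\in\mathcal{P}$ and $a\in A$, a direct convolution computation using unitarity of $\pi$ gives $e_{0}ap=\lambda(\psi)$ with $\psi(k)=\int_{G}\alpha_{g}(a)\langle\pi(k)\xi,\pi(g)\xi\rangle\,dg$, and one checks $\psi(k)=\alpha_{k}(\psi(1_{G}))$ for all $k$. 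Hence $\hat{E}(e_{0}ap)=\psi(1_{G})=\int_{G}\alpha_{g}(a)\overline{\langle\pi(g)\xi,\xi\rangle}\,dg\in M(A)$, and therefore $e_{0}\hat{E}(e_{0}ap)=\int_{G}\alpha_{k}(\psi(1_{G}))\lambda_{k}\,dk=\int_{G}\psi(k)\lambda_{k}\,dk=\lambda(\psi)=e_{0}ap$. Thus $e_{0}\hat{E}(e_{0}ap)=e_{0}ap$ for all $a\in A$ and $p\in\mathcal{P}$.

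Now I would propagate this to all of $\cN_{\hat{E}}$. By linearity it suffices to treat generators $y=xpa$; for such $y$ both $e_{0}y$ and $e_{0}xp$ lie in $\cM_{\hat{E}}$ (they have the form $bqx^{\prime}p^{\prime}a^{\prime}$), so $\hat{E}(e_{0}y)$ makes sense, and applying (1) to $e_{0}y=(e_{0}xp)a$ reduces the task to showing $e_{0}\hat{E}(e_{0}xp)=e_{0}xp$ for $x\in M(A\rtimes G)$. For fixed $p$ the map $z\mapsto e_{0}zp$ is norm-bounded, and, crucially, $z\mapsto\hat{E}(e_{0}zp)=T(e_{0}zp)$ is norm-bounded on $A\rtimes G$ (the continuity of $A\rtimes G\ni z\mapsto T(qzp)$ for $q,p\in\mathcal{P}$ fixed, as already used in Example \ref{dualweight}); since $\lambda(C(G,A))$ is dense in $A\rtimes G$ and $e_{0}\lambda(f)p=e_{0}cp$ with $c=\int_{G}\alpha_{g^{-1}}(f(g))\,dg\in A$, the previous paragraph gives $e_{0}\hat{E}(e_{0}xp)=e_{0}xp$ first for all $x\in A\rtimes G$, and then, by strict continuity on bounded sets (the partial map $z\mapsto T(e_{0}zp)$ extending strictly continuously to $M(A\rtimes G)$ via the dual-weight machinery of \cite{H3}), for all $x\in M(A\rtimes G)$. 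This establishes (2).

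The point that needs the most care is that $\hat{E}$ is genuinely unbounded, so the propagation step cannot be a naive density argument for $\hat{E}$ itself but must be routed through the maps $z\mapsto T(qzp)$ with $q,p\in\mathcal{P}$ fixed, which are bounded and, suitably extended, strictly continuous. The other input to be pinned down is the normalization $\hat{E}(\lambda(f))=f(1_{G})$ of Haagerup's canonical operator-valued weight on the compact-group crossed product; this is exactly what makes the convolution computation of the second paragraph return $e_{0}ap$ on the nose, and it is supplied by \cite{H1,H2,H3} together with the finite-group check in Example \ref{finitegroup}.
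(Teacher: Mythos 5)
Your proof is correct and rests on the same core mechanism as the paper's: the canonical dual weight is evaluation at $1_G$ on elements $\lambda(f)$ with $f\in C(G,A)$, which yields the identity $p_{1_{\widehat{G}}}\hat{E}(p_{1_{\widehat{G}}}\,\cdot\,)=p_{1_{\widehat{G}}}\,\cdot$ on a dense set, followed by a continuity argument. Part (1) is handled identically (the $A_{\alpha}''$-bimodule property from Section 2 (iii) of \cite{H1}). The two arguments diverge only in how they pass from $A\rtimes G$ to $M(A\rtimes G)$ in part (2). The paper cuts on the left by $a\in A^{G}$, so that $p_{1_{\widehat{G}}}axq$ lands in $A\rtimes G$ where norm-approximation by $\lambda(f_n)$ with $\lambda(f_n)=p_{1_{\widehat{G}}}\lambda(f_n)q$ is available, uses the module property to pull $a$ back out, and then removes $a$ via an approximate unit of $A^{G}$ --- everything stays in the norm topology and uses only the boundedness of $z\mapsto\hat{E}(p_{1_{\widehat{G}}}zq)$ on $A\rtimes G$. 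You instead invoke strict continuity on bounded sets of $z\mapsto T(e_{0}zp)$; this is legitimate, but to make it airtight you need to cite the normality of the compressed maps $x\mapsto T(b^{*}xa)$ for $a,b\in\mathfrak{n}_{T}$ from Haagerup's theory (bounded strict convergence gives $\sigma$-weak convergence in $A_{\alpha}''\bar{\rtimes}G$, and normality transports it through $T$), which your sketch only gestures at. Your explicit convolution computation $e_{0}ap=\lambda(\psi)$ with $\psi(k)=\alpha_{k}(\psi(1_{G}))$ is a nice self-contained verification of the dense-set identity for an arbitrary minimal projection $p\in\mathcal{P}$, slightly more general than the paper's, which only needs it for elements already of the form $p_{1_{\widehat{G}}}\lambda(f)q$. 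Net effect: the paper's route is more elementary at the multiplier-algebra step; yours makes the underlying convolution identity more transparent but leans harder on the von Neumann algebraic machinery of \cite{H1,H3}.
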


\begin{proof}
(1) follows from Section 2 (iii) of \cite{H1}.

To show (2),
consider $x\in M(A\rtimes G)$,
$b\in M(A)$,
$q\in\mathcal{P}$,
and $a\in A^{G}$.
Since $p_{1_{\widehat{G}}}axq$ is contained in $A\rtimes G$,
we can take a sequence $(f_{n})_{n=1}^{\infty}$ of $C(G,A)$ and elements $x_{n}:=\int_{G}f_{n}(g)\lambda_{g}dg$ of $A\rtimes G$ such that $\lim_{n\to\infty}x_{n}=p_{1_{\widehat{G}}}axq$ and $x_{n}=p_{1_{\widehat{G}}}x_{n}q$ hold for every $n$.
Then,
we have
\[
p_{1_{\widehat{G}}}x_{n}q=p_{1_{\widehat{G}}}\int_{G}f_{n}(g)\lambda_{g}dg=p_{1_{\widehat{G}}}\int_{G}\alpha_{g^{-1}}(f_{n}(g))dg
=p_{1_{\widehat{G}}}\hat{E}\left(p_{1_{\widehat{G}}}x_{n}q\right)
\]
for every $n$.
Thus,
we get
\begin{align*}
    ap_{1_{\widehat{G}}}xqb&
    =\lim_{n\to\infty}p_{1_{\widehat{G}}}x_{n}qb\\
    &=\lim_{n\to\infty}p_{1_{\widehat{G}}}\hat{E}\left(p_{1_{\widehat{G}}}x_{n}qb\right)\\
    &=p_{1_{\widehat{G}}}\hat{E}(p_{1_{\widehat{G}}}axqb)\\
    &=ap_{1_{\widehat{G}}}\hat{E}(p_{1_{\widehat{G}}}xqb)
\end{align*}
for every $a\in A^G$.
The third equality follows from the continuity of the map $M(A\rtimes G)\ni z\mapsto\hat{E}(p_{1_{\widehat{G}}}zq)$.
This implies $p_{1_{\widehat{G}}}xqb=p_{1_{\widehat{G}}}\hat{E}(p_{1_{\widehat{G}}}xqb)$.
By the definition of $\cN_{\hat{E}}$,
we get (2).
\end{proof}
\subsection{Isometrically shift-absorbing actions}\label{sec isom shift abs}
Isometrically shift-absorbing actions are introduced by Gabe and Szab\'{o} \cite{GS}.
In this subsection,
let $\alpha\colon G\acts A$ be an action of a locally compact second countable group $G$ on a separable \Cs-algebra $A$.
\begin{defn}
Let $l_{\alpha}^{\infty}(\IN, A)$ be the \Cs-algebra of every bounded sequence $(a_n)_n$ such that the map $G\ni g\mapsto (\alpha_{g}(a_n))_n\in l^{\infty}(\IN, A)$ is continuous.
The quotient $l_{\alpha}^{\infty}(\IN, A)/c_{0}(\IN, A)$ is denoted by $A_{\infty, \alpha}$ and the action $G\acts A_{\infty, \alpha}$ associated with $\alpha$ is denoted by $\alpha_{\infty}$.
We write $F_{\infty, \alpha}(A)$ for the central sequence algebra $(A_{\infty, \alpha}\cap A^{\prime})/(A_{\infty, \alpha}\cap A^{\perp})$ and $\tilde{\alpha}_{\infty}\colon G\acts F_{\infty, \alpha}(A)$ for the action associated with $\alpha_{\infty}$.
\end{defn}

\begin{defn}[Definition 3.7 of \cite{GS}]
An action $\alpha\colon G\acts A$ is isometrically shift-absorbing if there is a linear map $\fs\colon L^{2}(G)\rightarrow F_{\infty, \alpha}(A)$ such that $\fs(\lambda_{g}(\xi))=\tilde{\alpha}_{\infty,g}(\fs(\xi))$ and $\fs(\xi)^{*}\fs(\zeta)=\langle\zeta,\xi\rangle$ hold for any $\xi, \zeta\in L^{2}(G)$,
and any $g\in G$.
\end{defn}
Gabe and Szab\'{o} proved that for any amenable action $\alpha\colon G\acts A$ of a locally compact second countable group $G$ on a separable nuclear \Cs-algebra,\
there is an amenable action $\beta$ on some Kirchberg algebra which is isometrically shift-absorbing and $\KK^{G}$-equivalent to $\alpha$ (Theorem 3.13 of \cite{GS}). 
When $G$ is discrete and $A$ is a Kirchberg algebra,
an action $\alpha\colon G\acts A$ is isometrically shift-absorbing if and only if it is pointwise outer (Theorem 3.15 of \cite{GS}).

In this article,
we use the following.

\begin{thm}[Proposition 3.8 of \cite{GS}]\label{char}
An action $\alpha:G\acts A$ is isometrically shift-absorbing if and only if there is a $G$-equivalent $A$-bimodule map $\theta\colon(L^{2}(G, A), \lambda\otimes\alpha)\rightarrow(A_{\infty, \alpha}, \alpha_{\infty})$ such that $\theta(\xi)^{*}\theta(\zeta)=\langle\xi,\zeta\rangle$ for all $\xi, \zeta\in L^{2}(G, A)$,
where $\langle,\rangle$ is an inner product defined by $\langle\xi,\zeta\rangle:=\int_{G}\xi(g)^{*}\zeta(g)dg$.
\end{thm}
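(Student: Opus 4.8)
The plan is to prove the two implications separately. Constructing $\theta$ from $\fs$ is essentially a lifting exercise; recovering $\fs$ from $\theta$ needs a reindexation argument, and that is where the real work sits.

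For the direction from $\fs$ to $\theta$: given a witnessing linear map $\fs\colon L^{2}(G)\to F_{\infty,\alpha}(A)=(A_{\infty,\alpha}\cap A')/(A_{\infty,\alpha}\cap A^{\perp})$, I would pick for each $\xi$ a lift $\tilde{\fs}(\xi)\in A_{\infty,\alpha}\cap A'$ and define $\theta$ on the algebraic tensor product $L^{2}(G)\odot A\subset L^{2}(G,A)$ by $\theta(\xi\otimes a):=\tilde{\fs}(\xi)\,a$. Since $A_{\infty,\alpha}\cap A^{\perp}$ annihilates the copy of $A$ inside $A_{\infty,\alpha}$, this value is independent of the chosen lift, so $\theta$ is a well-defined linear map; and since each $\tilde{\fs}(\xi)$ commutes with $A$, it is an $A$-bimodule map. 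For the isometry property one computes, for $\xi,\zeta\in L^{2}(G)$ and $a,b\in A$,
\[
\theta(\xi\otimes a)^{*}\theta(\zeta\otimes b)=a^{*}\,\bigl(\tilde{\fs}(\xi)^{*}\tilde{\fs}(\zeta)\bigr)\,b ,
\]
where $\tilde{\fs}(\xi)^{*}\tilde{\fs}(\zeta)$ is a lift of $\fs(\xi)^{*}\fs(\zeta)$, which is the scalar $\langle\xi,\zeta\rangle_{L^{2}(G)}$ times the unit of $F_{\infty,\alpha}(A)$; multiplying on the left by $a^{*}$, on the right by $b$, and using $a^{*}e_{n}b\to a^{*}b$ for an approximate unit $(e_{n})_{n}$ of $A$, one gets $\theta(\xi\otimes a)^{*}\theta(\zeta\otimes b)=\langle\xi\otimes a,\zeta\otimes b\rangle$. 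Hence $\theta$ is isometric on $L^{2}(G)\odot A$ and extends to an isometric $A$-bimodule map $L^{2}(G,A)\to A_{\infty,\alpha}$ with $\theta(\xi)^{*}\theta(\zeta)=\langle\xi,\zeta\rangle$. Finally, $\alpha_{\infty,g}$ preserves both $A_{\infty,\alpha}\cap A'$ and $A_{\infty,\alpha}\cap A^{\perp}$ and carries $\fs(\xi)$ to $\tilde{\alpha}_{\infty,g}(\fs(\xi))=\fs(\lambda_{g}\xi)$, so comparing $\theta\bigl((\lambda_{g}\otimes\alpha_{g})(\xi\otimes a)\bigr)$ with $\alpha_{\infty,g}\bigl(\theta(\xi\otimes a)\bigr)$ after right multiplication by $\alpha_{g}(a)$ yields $G$-equivariance.

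For the direction from $\theta$ to $\fs$: fix $\xi\in L^{2}(G)$. The map $a\mapsto\theta(\xi\otimes a)$ is a bounded right $A$-module map $A\to A_{\infty,\alpha}$ of norm $\|\xi\|_{2}$, and, $\theta$ being a bimodule map, also a left module map. Thus for a quasi-central approximate unit $(e_{n})_{n}$ of $A$ the sequence $\bigl(\theta(\xi\otimes e_{n})\bigr)_{n}$ in $A_{\infty,\alpha}$ asymptotically commutes with $A\subset A_{\infty,\alpha}$ and satisfies $\theta(\xi\otimes e_{n})\,a\to\theta(\xi\otimes a)$ for every $a\in A$. A standard reindexation (diagonal-sequence) argument inside $A_{\infty,\alpha}$ then produces an element $z_{\xi}\in A_{\infty,\alpha}\cap A'$ with $z_{\xi}a=\theta(\xi\otimes a)$ for all $a\in A$, and I set $\fs(\xi):=[z_{\xi}]\in F_{\infty,\alpha}(A)$; any two such choices differ by an element annihilating $A$, so $\fs$ is well-defined and linear. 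Substituting $\fs(\xi)a=\theta(\xi\otimes a)$ into $\theta(\xi\otimes a)^{*}\theta(\zeta\otimes b)=\langle\xi,\zeta\rangle\,a^{*}b$ and into the equivariance identity, and then cancelling arbitrary $a,b\in A$ inside $F_{\infty,\alpha}(A)$ (where $[(e_{n})_{n}]$ is the unit), gives $\fs(\xi)^{*}\fs(\zeta)=\langle\xi,\zeta\rangle\,1$ and $\fs(\lambda_{g}\xi)=\tilde{\alpha}_{\infty,g}(\fs(\xi))$, so $\fs$ witnesses isometric shift-absorption.

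The main obstacle is the reindexation step in the second direction: one must turn the "left-multiplier-type" data $a\mapsto\theta(\xi\otimes a)$ into a genuine element of $A_{\infty,\alpha}\cap A'$, rather than merely a sequence in $A_{\infty,\alpha}$ or a multiplier of a hereditary subalgebra. Because $A$ is only separable and typically non-unital, $\theta(\xi\otimes e_{n})$ does not converge in norm, and one cannot bypass the quasi-central approximate unit together with a reindexation argument; ensuring that the output lies in the norm-continuous algebra $A_{\infty,\alpha}$ (and not just in $l^{\infty}(\IN,A)/c_{0}(\IN,A)$) and behaves equivariantly is exactly where separability of $A$ and second countability of $G$ enter.
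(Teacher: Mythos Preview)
The paper does not supply its own proof of this statement: Theorem~\ref{char} is quoted verbatim as Proposition~3.8 of \cite{GS} and is used as a black box in Example~\ref{exa isa1}. There is therefore nothing in the paper to compare your argument against.

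That said, your outline follows the natural strategy. The forward direction is essentially forced: lifting $\fs(\xi)$ to $A_{\infty,\alpha}\cap A'$ and multiplying into $A$ kills the $A^{\perp}$-ambiguity and yields $\theta$ directly, with the bimodule, isometry, and equivariance properties following from the corresponding properties of $\fs$ exactly as you compute. For the converse, your plan of feeding a quasi-central approximate unit into $\theta$ and reindexing is the standard way to promote a centralizing $A$-bimodule map into an honest element of $A_{\infty,\alpha}\cap A'$. Two points deserve care in a full write-up: first, the reindexation must be carried out simultaneously for a countable dense set of $\xi\in L^{2}(G)$ (not one $\xi$ at a time), so that the resulting $\fs$ is genuinely linear and not just defined pointwise; second, the equivariance condition $\fs(\lambda_{g}\xi)=\tilde{\alpha}_{\infty,g}(\fs(\xi))$ has to be built into the same diagonal argument, which is where second countability of $G$ is used. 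You flag both issues in your final paragraph, so you are aware of them, but the sketch as written treats each $\xi$ separately and would need to be tightened there.
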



\begin{exa}[Definition 3.4 of \cite{GS}]\label{isa exa}
Let $G$ be a compact second countable group and $\gamma\colon G\acts\cO_{\infty}$ be a quasi-free action on the Cuntz algebra induced by the countable infinite repeat $\lambda\otimes\mathrm{id}\colon G\acts L^{2}(G)\otimes l^{2}(\IN)$ of the left regular representation $\lambda$
(i.e., there is a liner map $L^{2}(G)\otimes l^{2}(\IN)\ni\xi\mapsto\hat{\xi}\in\cO_{\infty}$ such that $\cO_{\infty}=\text{C}^{*}(\{\hat{\xi}\}_{\xi\in L^{2}(G)\otimes l^{2}(\IN)})$,
$\hat{\eta}^{*}\hat{\xi}=\langle\xi,\eta\rangle$ and $\gamma_{g}(\hat{\xi})=\widehat{\lambda_{g}\otimes\mathrm{id}(\xi)}$ hold for every $\xi, \eta\in L^{2}(G)\otimes l^{2}(\IN)$ and $g\in G$).
Then the infinite tensor product action $\gamma^{\otimes\infty}\colon G\acts \cO_{\infty}^{\otimes\infty}$ is isometrically shift-absorbing.
\end{exa}
\subsection{Properly outer endomorphisms and quasi-product actions}\label{seq quasi product}
The notion of a properly outer automorphism of a von Neumann algebra is introduced by Connes.
The analogue for automorphisms of \Cs-algebras is also well-known
(see \cite{Ki}, \cite[Section 6]{OP}).
In this article,
we consider properly outer endomorphisms of simple \Cs-algebras.
We use the following definition
(see \cite{BEK},\cite[Section 7]{I1}, \cite[Section 3]{I2} for details).
\begin{defn}[See Section 1 of \cite{BEK}]\label{defn prop outer}
    An endomorphism $\rho$ of a \Cs-algebra $A$ is properly outer if for every nonzero hereditary subalgebra $H$ of $A$,
    every $x\in A$,
    and every $\epsilon>0$,
    there exists a positive element $c$ in $H$ with $\|c\|=1$ satisfying $\|cx\rho(c)\|<\epsilon$.
\end{defn}

We mention the following lemma.
The proof of \cite[Lemma 3.2]{Ki} works for properly outer endomorphisms.
\begin{lem}[Lemma 3.2 of \cite{Ki}]\label{lem prop outer}
Let $a$ be a positive element of a \Cs-algebra $A$,
$x_{1},\dots,x_{n}$ be elements of $A$,
and $\rho_{1},\dots,\rho_{n}$ be properly outer endomorphisms of $A$.
Then,
for every $\epsilon>0$,
there is a positive element $c\in A$ with $\|c\|=1$ such that 
\[
\|cac\|>\|a\|-\epsilon,\;{\rm and}\;\|cx_{i}\rho_{i}(c)\|<\epsilon
\]
hold for every $i=1,\dots,n$.
\end{lem}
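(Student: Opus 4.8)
The plan is to reduce the statement to the single‑endomorphism case of Kirchberg's Lemma 3.2 of \cite{Ki} (adapted to properly outer endomorphisms) by an iteration argument, paying for each new $\rho_i$ a controlled loss in the lower bound $\|cac\|$. Concretely, I would proceed as follows.

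First I would establish the base case: applying the cited Lemma 3.2 of \cite{Ki} (in its properly outer‑endomorphism form, which is legitimate by the remark preceding the statement) to the single pair $(x_1,\rho_1)$ and a tolerance $\delta_1$ to be chosen, I obtain a positive contraction $c_1$ with $\|c_1 a c_1\| > \|a\| - \delta_1$ and $\|c_1 x_1 \rho_1(c_1)\| < \delta_1$. For the inductive step, suppose I have a positive contraction $c_k$ handling $\rho_1,\dots,\rho_k$ with losses summing to $\sum_{i\le k}\delta_i$ in the lower bound and $\|c_k x_i \rho_i(c_k)\| < \delta_i$ for $i\le k$. I would like to pass from $c_k$ to $c_{k+1}$ by working inside the hereditary subalgebra $H_k := \overline{c_k A c_k}$ and also inside the "cut‑down" positive element $a_k := c_k a c_k$ (whose norm is still close to $\|a\|$). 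Applying Lemma 3.2 of \cite{Ki} \emph{within} $A$ but with the test element $a_k$, with $H = H_k$ and $\rho_{k+1}$, produces a positive contraction $d$ supported in $H_k$ (so $d = c_k' d c_k'$ for some contraction, in particular $d c_k \approx d$ up to a controllable error since $d\in H_k$) with $\|d a_k d\| > \|a_k\| - \delta_{k+1}$ and $\|d x_{k+1}\rho_{k+1}(d)\| < \delta_{k+1}$. Setting $c_{k+1} := d$, the lower bound reads $\|c_{k+1} a c_{k+1}\| \ge \|d a_k d\| - (\text{error from }dc_k\approx d) > \|a\| - \sum_{i\le k+1}\delta_i - (\text{error})$, and for $i\le k$ one estimates $\|c_{k+1} x_i \rho_i(c_{k+1})\| = \|d x_i \rho_i(d)\|$; since $d\in H_k = \overline{c_k A c_k}$, approximating $d$ by $c_k y c_k$ and using $\|c_k x_i \rho_i(c_k)\|<\delta_i$ together with $\rho_i(d)\approx\rho_i(c_k)\rho_i(y)\rho_i(c_k)$ lets me bound this by $\delta_i + (\text{error})$. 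Choosing all the $\delta_i$ and the approximation errors small enough that the accumulated sum stays below $\epsilon$, the final $c := c_n$ works.

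The main obstacle is the bookkeeping in the inductive step: one must verify that cutting down to the hereditary subalgebra $H_k$ genuinely preserves the \emph{earlier} estimates $\|c x_i\rho_i(c)\|<\epsilon$ for $i\le k$, and this requires the observation that any positive contraction $d\in \overline{c_k A c_k}$ can be approximated in norm by elements of the form $c_k y c_k$, so that $d x_i \rho_i(d) \approx c_k y c_k x_i \rho_i(c_k)\rho_i(y)\rho_i(c_k)$, whose norm is controlled by $\|c_k x_i \rho_i(c_k)\|$ after moving the inner $c_k$ through — here one uses $\|d x_i \rho_i(d)\| \le \|y\|^2\|c_k x_i \rho_i(c_k)\| + (\text{error})$, noting $c_k x_i$ versus $x_i$: actually it is cleaner to track $\|c_k x_i \rho_i(c_k)\|$ directly rather than $x_i$, which is why I would phrase the induction hypothesis with $c_k$ on both sides as above. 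A cleaner alternative, which I would present if the approximation errors become cumbersome, is to apply the cited lemma \emph{once} after first replacing the family $\{(a, x_i, \rho_i)\}$ by a single auxiliary positive element and a single auxiliary endomorphism on a matrix amplification $M_n(A)$ (or on $A$ itself via a choice of isometries, using stability is not available in general so one works inside $A^{**}$ or simply iterates) — but since Lemma 3.2 of \cite{Ki} is already stated for finitely many elements $x_i$ but a single $a$, the iteration above is the most direct route and the matrix trick is unnecessary. In fact, rereading the cited Lemma 3.2 of \cite{Ki}, it already handles a finite list $x_1,\dots,x_n$ and $\rho_1,\dots,\rho_n$ simultaneously for automorphisms; the present lemma is then \emph{verbatim} that statement with "automorphism" weakened to "properly outer endomorphism", and one checks line by line that the proof in \cite{Ki} uses only Definition \ref{defn prop outer} — i.e. the defining approximation property of proper outerness — and never inverts $\rho_i$. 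So the shortest correct proof is simply: "The proof of \cite[Lemma 3.2]{Ki} applies mutatis mutandis, since it only invokes the defining property of proper outerness and never uses surjectivity or injectivity of the $\rho_i$." I would include the iteration sketch above only as an expansion of that one‑line argument, for the reader's convenience.
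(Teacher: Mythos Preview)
Your final conclusion is exactly what the paper does: it gives no proof beyond the remark that ``the proof of \cite[Lemma 3.2]{Ki} works for properly outer endomorphisms,'' which is precisely your one-line argument that Kishimoto's proof only uses the defining approximation property of proper outerness and never inverts the $\rho_i$. The iteration detour you sketch first is unnecessary (and a bit shaky, since the approximants $c_k y c_k$ need not have $\|y\|$ under control), but since you correctly identify it as superfluous and land on the same mutatis mutandis observation as the paper, your proposal is fine.
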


The notion of quasi-product actions is introduced by Bratteli-Elliott-Kishimoto in \cite{BEK}.
They proved several equivalent conditions for compact group actions on separable prime \Cs-algebras.
In this article,
we use the following definition.
\begin{defn}[Theorem 1 (3) of \cite{BEK}]\label{def quasi product}
    Suppose $\alpha\colon G\acts A$ is an action of a compact second countable group on a separable \Cs-algebra. 
    For each irreducible representation $\sigma$ of $G$,
    let $\hat{\alpha}_{\sigma}$ be the dual endomorphism of the stabilized crossed product $A\rtimes G\otimes\IK$
    (see Section 1 of \cite{BEK} for the definition).
    We say that $\alpha$ is a quasi-product action if $A\rtimes G$ is prime and $\hat{\alpha}_{\sigma}$ is properly outer for every non-trivial irreducible representation $\sigma$.
\end{defn} 
 We note that separability is required for Theorem 1 of \cite{BEK} to hold.
 \begin{rmk}\label{rmk prime}
     If an action $\alpha\colon G\acts A$ is quasi-product,
     we have $M(A)\cap (A^{G})^{\prime}=\IC$.
     Indeed,
     if $M(A)\cap (A^{G})^{\prime}\neq\IC$,
     for every faithful representation $\pi\colon A\rightarrow B(\cH)$,
     $B(\cH)\cap\pi(A^{G})^{\prime}\neq\IC$ holds.
     This contradicts Theorem 1 (2) of \cite{BEK}.
 \end{rmk}
 \begin{rmk}
     By Theorem 1 (8) of \cite{BEK},
     an isometrically shift-absorbing action $G\acts A$ on a separable prime \Cs-algebra is quasi-product.
 \end{rmk}

\section{Irreducible decomposition of Hilbert \Cs-bimodules}\label{sec decomp}
When a finite group action $\alpha\colon\Gamma\acts A$ on a $\sigma$-unital simple \Cs-algebra is outer and stable,
the canonical Hilbert \Cs-module $\cE_{E}$ associated with the conditional expectation $E\colon A\rightarrow A^{\Gamma}$ has an irreducible decomposition as an $A^{\Gamma}$-bimodule.
Each irreducible direct summand corresponds to an irreducible representation of $\Gamma$
(see Section 6 of \cite{I1}).
The goal of this section is to give a similar statement for compact group actions under certain assumptions.
In Section 4 of \cite{I1},
the sectors associated with finite index inclusions \cite[Definition 4.5]{I1} are discussed.
Similarly,
we consider the sectors associated with inclusions arising from compact group actions.
The families of the endomorphisms $\{\rho_{\sigma}\}_{\sigma}$ and the Hilbert spaces $\{\cH_{\sigma}\}_{\sigma}$ to be constructed in this section are well-known to experts and may not require detailed explanation for some readers.
(See Section 2.4 of \cite{I2},
Section \three.2 of \cite{AHKT} and Section 4 of \cite{Ro}.)
In this section, 
for the reader's convenience,
we detail a specific construction of $\{\rho_{\sigma}\}_{\sigma}$ and $\{\cH_{\sigma}\}_{\sigma}$.

Throughout this section,
let $A$ be a $\sigma$-unital simple \Cs-algebra, $G$ be a compact second countable group and $(A, G, \alpha)$ be a \Cs-dynamical system satisfying the following.
\begin{itemize}
\item The action $\alpha$ is faithful and stable,
(i.e., $(A, \alpha)$ is conjugate to $(A\otimes\IK, \alpha\otimes{\rm id})$).
\item The fixed point algebra $A^G$ is simple.
\item The relative commutant $M(A)\cap(A^{G})^{\prime}$ is trivial. 
\end{itemize}
By Proposition A in the Appendix section,
the crossed product $A\rtimes G$ is simple under the above assumptions.
Let $E\colon A\rightarrow A^{G}$ be the canonical conditional expectation.
Additionally,
we use the notation introduced in Section \ref{sec pre} below.
\begin{lem}
There is an isomorphism between $\cK(\cE_{E})$ and $A\rtimes G$.
\end{lem}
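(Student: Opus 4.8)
The goal is to identify $\cK(\cE_E)$ with $A \rtimes G$, where $E\colon A \to A^G$ is the canonical conditional expectation. The plan is to exhibit a concrete $*$-isomorphism by comparing both algebras with their common action on a natural Hilbert space, or more directly, by writing down the covariant representation that generates $A \rtimes G$ inside $\cL(\cE_E)$ and checking it lands in the compacts. First I would recall that $\cE_E$ is the completion of $A$ under the $A^G$-valued inner product $\langle \eta(a), \eta(b)\rangle = E(a^*b)$, and that $E(a^*b) = \int_G \alpha_g(a^*b)\,dg$. The natural idea is that $\cE_E$ should be isomorphic, as a right Hilbert $A^G$-module, to a submodule of $L^2(G, A)$ (or of $C(G,A)$ completed appropriately), via $\eta(a) \mapsto (g \mapsto \alpha_{g^{-1}}(a))$ or a similar formula; one checks this map is inner-product preserving because $\int_G \alpha_g(a)^*\alpha_g(b)\,dg = E(a^*b)$ when we use the right conventions. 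Under such an identification, the left $A$-action $\phi$ and the natural unitaries implementing $G$ assemble into the covariant pair $(\phi, \lambda)$ generating a copy of $A \rtimes G$.

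The key steps, in order: (1) Define the candidate map. Set $A_1 = \cK(\cE_E) = \overline{\mathrm{span}}\{aeb \mid a,b \in A\}$, where $e$ is the Jones projection, $e\eta(a) = \eta(E(a))$. Produce a $*$-homomorphism $A \rtimes G \to \cL(\cE_E)$ from the covariant pair consisting of $\phi\colon A \to \cL(\cE_E)$ together with unitaries $u_g \in \cL(\cE_E)$ defined by $u_g \eta(a) = \eta(\alpha_g(a))$; verify $u_g$ is well-defined and unitary (it preserves the $A^G$-valued inner product since $E \circ \alpha_g = E$), that $g \mapsto u_g$ is strictly continuous, and that $u_g \phi(a) u_g^* = \phi(\alpha_g(a))$. (2) Identify $e$ with the averaging element: check that $e = \int_G u_g\,dg$ in $\cL(\cE_E)$, i.e. the Jones projection is exactly the image of the averaging projection $p_{1_{\widehat G}} \in \rg(G)$. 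This is the bridge between the two pictures. (3) Show the range of $A \rtimes G \to \cL(\cE_E)$ is precisely $A_1$: since $A \rtimes G = \overline{\mathrm{span}}\{a\,u_g\,b \mid a,b\in A, g\in G\}$ and averaging gives $aeb = \int_G a u_g b\,dg \in A_1$, one gets the image contained in $A_1$; conversely the standard Watatani-type computation $a u_g b = $ (limit of combinations of $a' e b'$ translated by the action) shows $A_1$ is contained in the image — more cleanly, use that $\overline{\mathrm{span}}\{a e b\}$ together with the $G$-action generates all of $A\rtimes G$'s image. (4) Show the map is injective: $A\rtimes G$ is simple by Proposition A (given the excerpt's standing hypotheses), so any nonzero $*$-homomorphism out of it is injective; it suffices to observe the map is nonzero (it sends $e \neq 0$ to $e$).

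The main obstacle I anticipate is step (1)–(2): carefully setting up the unitaries $u_g$ on the module $\cE_E$ with the correct variance (left vs. right conventions can flip the action, and one must make sure $u_g$ descends to the completion and that $g\mapsto u_g$ is appropriately continuous so that $\int_G u_g\,dg$ makes sense as an element of $\cL(\cE_E)$ and equals $e$), and the bookkeeping needed to see that the image of $A \rtimes G$ is exactly $\cK(\cE_E)$ rather than merely a subalgebra of $\cL(\cE_E)$ — this requires using that $A$ (not just $A^G$) acts nondegenerately and that $\{aeb\}$ has dense span in the compacts, which is the standard fact recorded in the Notations subsection. Once the isomorphism $A\rtimes G \cong \cK(\cE_E)$ is in hand, everything else is either a consequence of simplicity of $A\rtimes G$ or a routine density argument. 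I would also remark that this lemma is the \Cs-algebraic analogue of the classical fact $\cK(\cE_E) \cong A \rtimes \Gamma$ for outer finite group actions recorded in Example~\ref{finitegroup}, so the proof strategy mirrors that case with $\sum_{s\in\Gamma}$ replaced by $\int_G$.
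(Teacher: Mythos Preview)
Your approach is essentially the paper's: build the covariant pair $(\phi,u)$ on $\cE_E$, identify the Jones projection $e$ with the image of the averaging projection $p_{1_{\widehat G}}=\int_G\lambda_g\,dg$, and then invoke simplicity of $A\rtimes G$ (Proposition~A). Steps (1), (2), and (4) match the paper exactly.

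The gap is in step (3). Your argument for ``image $\subset\cK(\cE_E)$'' does not work: from $aeb=\int_G au_gb\,dg$ you cannot conclude that each $au_gb$ lies in $\cK(\cE_E)$, and in fact neither $\phi(a)$ nor $u_g$ is a compact operator on $\cE_E$ in general (this would require $E$ to have finite index). Your ``conversely'' and ``more cleanly'' fallback sentences are also confused about which inclusion they establish. The paper handles this with a one-line ideal argument: once you know $\cK(\cE_E)\subset(\phi\rtimes u)(A\rtimes G)$ (which follows immediately from $e$ being in the image), observe that $\cK(\cE_E)$ is an ideal of $\cL(\cE_E)$, so $(\phi\rtimes u)^{-1}(\cK(\cE_E))$ is a nonzero ideal of the simple algebra $A\rtimes G$ and hence equals all of it. This gives the reverse inclusion and, combined with injectivity from simplicity, the isomorphism. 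You already have simplicity available in step (4); you just need to deploy it here instead of attempting a direct density computation.
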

\begin{proof}
There are the natural $*$-homomorphism $\phi\colon A\rightarrow\cL(\cE_{E})$ and the unitary representation $u\colon G\rightarrow\cL(\cE_{E})$ defined by 
\[
\phi(a)\eta(b):=\eta(ab),\;u_{g}(\eta(a)):=\eta(\alpha_{g}(a))
\]
for all $a, b\in A$ and all $g\in G$.
Since the pair $(\phi, u)$ generates a covariant representation of $(A, G, \alpha)$,
we have the natural $*$-homomorphism $\phi\rtimes u\colon A\rtimes G\rightarrow \cL(\cE_{E})$.
Let $p_{1_{\widehat{G}}}:=\int_{G}\lambda_{g}dg\in M(A\rtimes G)$,
then we get $\phi\rtimes u(p_{1_{\widehat{G}}})=e$,
where $e$ is the Jones projection.
This implies $\cK(\cE_{E})=\overline{\mathrm{span}}\{aeb\mid a, b\in A\}\subset\phi\rtimes u(A\rtimes G)$.
Since $A\rtimes G$ is simple and $\cK(\cE_{E})$ is an ideal of $\cL(\cE_{E})$,
$\phi\rtimes u$ is an isomorphism between $A\rtimes G$ and $\cK(\cE_{E})$. 
\end{proof}
Using the above isomorphism,
the Jones projection $e$ is identified with the averaging projection $p_{1_{\widehat{G}}}=\int_{G}\lambda_{g}dg\in M(A\rtimes G)\cap (A^{G})^\prime$.
To construct the canonical isomorphism $\gamma_{1}$ as in Lemma 4.2 of \cite{I1},
we use the following result.
\begin{thm}[Theorem 4.23 of \cite{B}]\label{Brown}
Let $C$ be a $\sigma$-unital \Cs-algebra,
and $p\in M(C)$ be a projection.
If the hereditary \Cs-subalgebra $A:=pCp$ generated by $p$ is stable and generates $C$ as an ideal,
then there exists an isometry $u\in M(C)$ with $uu^{*}=p$.
\end{thm}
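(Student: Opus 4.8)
The plan is to restate the claim in the language of Hilbert $C$-modules and recognize it as an instance of Brown's stabilization theorem. View $C$ as the standard right Hilbert $C$-module $C_{C}$ with $\ip{a,b}=a^{*}b$, so that $\cL(C_{C})=M(C)$ and $\cK(C_{C})=C$; under this identification the adjointable operator $a\mapsto pa$ determined by the projection $p\in M(C)$ is the orthogonal projection of $C_{C}$ onto the submodule $E:=\overline{pC}$. From this vantage point, an isometry $u\in M(C)$ with $uu^{*}=p$ is exactly a unitary isomorphism of Hilbert $C$-modules $\Psi\colon C_{C}\to E$ followed by the inclusion $\iota_{E}\colon E\hookrightarrow C_{C}$: one has $u:=\iota_{E}\circ\Psi$, $u^{*}u=\Psi^{*}\Psi=1$, and $uu^{*}=\iota_{E}\iota_{E}^{*}=p$, and conversely. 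Hence it suffices to prove $E\cong C_{C}$ as right Hilbert $C$-modules.

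Next I would extract the three structural facts that the hypotheses encode. First, $E$ is countably generated: if $h\in C$ is a strictly positive element, then $ph$ generates $E$, because $\overline{pC}=\overline{phC}$. Second, $E$ is full: $\ip{E,E}_{C}=\overline{(pC)^{*}(pC)}=\overline{CpC}$, and $\overline{CpC}=C$ is precisely the hypothesis that $A=pCp$ generates $C$ as an ideal (one verifies $\overline{CpC}=\overline{C(pCp)C}$ by sandwiching an approximate unit of $C$). Third, $\cK(E)=\overline{(pC)(pC)^{*}}=\overline{pCp}=pCp=A$, which is stable by hypothesis.

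With these facts in place, the conclusion follows from the Hilbert-module form of Brown's theorem: a countably generated full right Hilbert $C$-module $F$ with $\cK(F)$ stable is isomorphic to the standard module $\cH_{C}=C\otimes\ell^{2}(\IN)$. Applying this to $F=E$ gives $E\cong\cH_{C}$. To conclude $E\cong C_{C}$ I still need $C_{C}\cong\cH_{C}$; this is immediate from the internal orthogonal decomposition $C_{C}=E\oplus\overline{(1-p)C}$, which yields $C_{C}\cong\cH_{C}\oplus\overline{(1-p)C}\cong\cH_{C}$ by Kasparov's stabilization theorem, since the complement is again countably generated ($C$ being $\sigma$-unital). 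Transporting the resulting isomorphism $C_{C}\cong E$ back through $M(C)=\cL(C_{C})$ produces the desired isometry $u$ with $u^{*}u=1$ and $uu^{*}=p$; as a byproduct, $C$ itself turns out to be stable.

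The entire weight of the argument rests on the Hilbert-module stabilization theorem cited in the previous paragraph; the rest is the standard dictionary between hereditary subalgebras and Hilbert modules, together with Kasparov's stabilization theorem. If one prefers not to quote that statement, the alternative is to prove it in situ: $E$ sits as a complemented submodule of $\cH_{C}$ (Kasparov), and one uses the stability of $\cK(E)=A$ to rotate this copy of $E$ around inside $\cH_{C}$ until it fills the whole module. That last rotation step is where I expect the real difficulty to lie, and it is essentially the content of \cite{B}.
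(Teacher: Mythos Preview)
The paper does not prove this statement; it is quoted without proof as Theorem~4.23 of \cite{B} and used as a black box (e.g.\ in Lemma~\ref{canonical} and in the proof of Lemma~B). There is therefore no in-paper argument to compare your attempt against.

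Your Hilbert-module reformulation is correct as far as it goes: the identifications $\cL(C_{C})=M(C)$, $E=\overline{pC}$, $\cK(E)=pCp=A$, fullness of $E$ from $\overline{CpC}=C$, and countable generation from $\sigma$-unitality of $C$ are all standard and valid, as is the equivalence between an isometry $u\in M(C)$ with $uu^{*}=p$ and a unitary $C_{C}\to E$ of Hilbert $C$-modules. The passage from $E\cong\cH_{C}$ to $C_{C}\cong\cH_{C}$ via Kasparov stabilization is also fine.

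The gap is the one you name yourself in your last paragraph. The ``Hilbert-module form of Brown's theorem'' you invoke is not an independent input; it is the same theorem in different language. The standard proof of that Hilbert-module assertion embeds $F$ as $q(\cH_{C})$ for a projection $q\in M(C\otimes\IK)$ via Kasparov stabilization, observes that $q(C\otimes\IK)q\cong\cK(F)$ is stable and full, and then applies precisely the multiplier-algebra statement you are trying to establish (with $C\otimes\IK$ in place of $C$) to conclude $q\sim 1$. So what you have written is a correct and useful \emph{reformulation}, not a proof: the substantive step---using stability of $A$ to produce the partial isometry inside the multiplier algebra---has been relocated rather than supplied.
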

\begin{lem}\label{canonical}
There is an isomorphism $\gamma_{1}\colon A\rtimes G\rightarrow A^{G}$.
\end{lem}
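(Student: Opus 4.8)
The plan is to obtain $\gamma_{1}$ as a composition of two isomorphisms: an identification $e(A\rtimes G)e\cong A^{G}$ of the corner cut by the Jones projection, and an identification $A\rtimes G\cong e(A\rtimes G)e$ coming from Theorem \ref{Brown}. For the first, transport everything through the isomorphism $\phi\rtimes u\colon A\rtimes G\to A_{1}=\cK(\cE_{E})$ of the previous lemma, under which $p_{1_{\widehat{G}}}$ becomes $e$, so that $e(A\rtimes G)e$ becomes $eA_{1}e$. From $e\eta(a)=\eta(E(a))$ and $E(xa)=xE(a)$ for $x\in A^{G}$ one checks directly that $e$ commutes with $\phi(A^{G})$ and that $e\phi(a)e=\phi(E(a))e$ for every $a\in A$; since $A_{1}=\overline{\mathrm{span}}\{aeb\mid a,b\in A\}$ and $E$ maps onto $A^{G}$, this yields $eA_{1}e=\overline{\phi(A^{G})e}$. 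The map $A^{G}\ni x\mapsto\phi(x)e$ is then an isometric $*$-homomorphism onto this corner: multiplicativity and $*$-invariance follow from $e$ commuting with $\phi(A^{G})$ together with $e^{2}=e$, and it is isometric because $\phi(x)e\,\eta(b)=\eta(xb)$ for $b\in A^{G}$ while $\eta$ is isometric on $A^{G}$, so that $\norm{\phi(x)e}\geq\norm{xb_{\lambda}}\to\norm{x}$ for an approximate unit $(b_{\lambda})$ of $A^{G}$, the reverse inequality being clear. Hence $e(A\rtimes G)e\cong A^{G}$.

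Next I would verify the hypotheses of Theorem \ref{Brown} for $C=A\rtimes G$ and the projection $p=e\in M(A\rtimes G)$. The algebra $A\rtimes G$ is $\sigma$-unital because $A$ is $\sigma$-unital and $G$ is compact and second countable. The corner $pCp\cong A^{G}$ is stable: the equivariant isomorphism $(A,\alpha)\cong(A\otimes\IK,\alpha\otimes\mathrm{id})$ restricts to an isomorphism $A^{G}\cong(A\otimes\IK)^{\alpha\otimes\mathrm{id}}$, and the fixed-point algebra of $\alpha\otimes\mathrm{id}$ is the range of the conditional expectation $E\otimes\mathrm{id}$, namely $A^{G}\otimes\IK$, so $A^{G}\cong A^{G}\otimes\IK$. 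Finally $A\rtimes G$ is simple by Proposition A and $pCp\cong A^{G}\neq0$, so $pCp$ generates $A\rtimes G$ as an ideal. Theorem \ref{Brown} then provides an isometry $u\in M(A\rtimes G)$ with $uu^{*}=e$, whence $u^{*}u=1$.

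It remains to assemble the pieces. The map $\mathrm{Ad}(u)\colon A\rtimes G\to e(A\rtimes G)e$ given by $y\mapsto uyu^{*}$ is a $*$-isomorphism: it takes values in $e(A\rtimes G)e$ since $uyu^{*}=e(uyu^{*})e$, it is injective since $u^{*}(uyu^{*})u=y$, and it is surjective since every $z\in e(A\rtimes G)e$ satisfies $z=eze=u(u^{*}zu)u^{*}$. Composing $\mathrm{Ad}(u)$ with the isomorphism $e(A\rtimes G)e\cong A^{G}$ of the first paragraph produces the desired $\gamma_{1}\colon A\rtimes G\to A^{G}$. I do not expect a deep obstacle here; the only real content is ensuring that Theorem \ref{Brown} applies, and within that the step which genuinely uses the standing stability assumption on $\alpha$ is the stability of $A^{G}$ (equivalently of $eA_{1}e$) --- this, together with carefully tracking the two identifications, is where I expect the care to be needed.
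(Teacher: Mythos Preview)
Your argument is correct and follows the same route as the paper: identify the corner $e(A\rtimes G)e$ with $A^{G}$, check that this corner is stable and full so that Theorem \ref{Brown} applies, and compose the resulting compression isomorphism $\mathrm{Ad}(u)$ with the corner identification. The paper's proof is simply a terse version of yours---it asserts $e(A\rtimes G)e=eA^{G}$ and that this corner is stable and full without spelling out the verifications you provide (in particular your explicit check that $A^{G}$ is stable from the standing assumption on $\alpha$, and your appeal to Proposition A for fullness).
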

\begin{proof}
Since the corner \Cs-algebra $e(A\rtimes G)e=eA^{G}$ is stable and full,
we have an isometry $v\in M(A\rtimes G)$ such that $vv^{*}=e$ by Theorem \ref{Brown}.
Then there is an $*$-isomorphism $\gamma_{1}\colon A\rtimes G\rightarrow A^{G}$ such that $vxv^{*}=\gamma_{1}(x)e$ for any $x\in A\rtimes G$.
\end{proof}

It is well-known that $A\rtimes G$ is isomorphic to $(A\otimes \IK(L^{2}(G)))^{G}$,
where $A$ has the action $\alpha$ and $\IK(L^2(G))$ has the action associated with the right regular representation.
Since the relative commutant $M(A)\cap(A^{G})^\prime$ is trivial,
we have isomorphisms 
\[
M(A\rtimes G)\cap (A^{G})^{\prime}\cong M(A\otimes \IK(L^{2}(G)))^{G}\cap (A^{G}\otimes 1)^{\prime}=1\otimes\rg(G)^{\prime\prime}\cong\prod_{[\sigma]\in\widehat{G}}B(V_{\sigma}),
\]
where each $V_\sigma$ is a representation space of $\sigma$. 
Since the relative commutant $M(A\rtimes G)\cap (A^{G})^{\prime}$ is of type $I$ and $A^G$ is stable,
we get a system of Hilbert spaces $\{\cH_{\sigma}\}_{[\sigma]\in\widehat{G}}$ in $M(A)$ as follows.
(See Section 4 of \cite{I1}.)

Fixing the system of representatives,
we assume that $\widehat{G}$ is the set of pairwise orthogonal irreducible representations of $G$.
Let $A_{\sigma}$ be a direct summand of $M(A\rtimes G)\cap (A^{G})^{\prime}$ which corresponds to $\sigma\in\widehat{G}$.
Take a minimal projection $p_{\sigma}\in A_{\sigma}$.
Since we have $p_{\sigma}(A\rtimes G)p_{\sigma}\supset p_{\sigma}A^{G}\cong A^G$ and $A^{G}$ is stable,
there is a partial isometry $w_{\sigma}\in M(A\rtimes G)$ such that $w_{\sigma}w_{\sigma}^{*}=e$ and $w_{\sigma}^{*}w_{\sigma}=p_{\sigma}$ by Theorem \ref{Brown}.
Since $e=p_{1_{\widehat{G}}}$ and $p_{\sigma}$ are elements in $\mathcal{P}$
(see Example \ref{dualweight} for the definition),
and $w_{\sigma}=ew_{\sigma}p_{\sigma}$,
$w_{\sigma}$ is contained in the domain $\cM_{\hat{E}}$ of $\hat{E}$ for every $\sigma\in\widehat{G}$.
We define the endomorphism $\rho_{\sigma}\colon A^{G}\rightarrow A^{G}$ by $\rho_{\sigma}(x)e:=w_{\sigma}xw_{\sigma}^{*}$ for all $x\in A^{G}$.
For each endomorphism $\rho$ of $A^{G}$,
the space 
\[\cH_{\rho}:=\{T\in M(A)\mid Tx=\rho(x)T
\mathrm{\ for\ all\ } x\in A^{G}\}
\]
is closed under the action of $G$ and which admits an inner product $\langle T| S\rangle_{\rho}:=S^{*}T$.
We write $(\cH_{\sigma},\langle\:|\:\rangle_{\sigma})$ for $(\cH_{\rho_\sigma}, \langle\:|\:\rangle_{\rho_\sigma})$.
Take matrix units $\{f_{\sigma, i, j}\}_{i, j}$ of $A_{\sigma}$ with $f_{\sigma, 1, 1}=p_\sigma$.
Since the partial isometries $\{w_{\sigma, i}:=w_{\sigma}f_{\sigma, 1, i}\}_ {i=1,2,\dots, \text{dim}(\sigma)}$ are contained in $\cM_{\hat{E}}$,
we have $w_{\sigma, i}=e\hat{E}(w_{\sigma, i})$ by (2) of Lemma \ref{lem dual}.
Define $v_{\sigma, i}:=\sqrt{\text{dim}(\sigma)}^{-1}\hat{E}(w_{\sigma, i})\in\cH_{\sigma}$ for all $\sigma\in\widehat{G}$ and $i=1,\dots, \text{dim}(\sigma)$.
Using the above notations,
we get the following as in Section 3 of \cite{ILP}.

\begin{lem}\label{endom}
The following hold. 
\begin{itemize}
\item[(1)] For every $\sigma, \pi\in\widehat{G}$ with $\sigma\neq\pi$,
$\rho_{\sigma}$ is an irreducible endomorphism and $(\rho_{\sigma}, \rho_{\pi})=0$.

\item[(2)] For every $\sigma\in\widehat{G}$,
we have $A_{\sigma}=\cH_{\sigma}^{*}e\cH_{\sigma}$.
\item[(3)] For every $\sigma\in\widehat{G}$,
the family $\{v_{\sigma, i}\}_{i}$ of isometries forms an orthonormal basis of $\cH_{\sigma}$ satisfying the Cuntz reration.
\item[(4)] There exists an isomorphism $(\cH_{\sigma}, \alpha)\cong(V_{\sigma}, \sigma)$ of $G$-Hilbert spaces for every $\sigma\in\widehat{G}$.
\item[(5)] For every $\sigma\in\widehat{G}$,
$\rho_{\sigma}$ is of finite index.
\item[(6)] For every $\sigma\in\widehat{G}$,
we have $\overline{\rho}_{\sigma}\sim\rho_{\overline{\sigma}}$ and there is an isometry $\overline{R}_{\sigma}\in(\id_{A^{G}}, \rho_{\sigma}\circ\rho_{\overline{\sigma}})$ such that $\cH_{\sigma}^{*}\overline{R}_{\sigma}=\cH_{\overline{\sigma}}$.
\end{itemize}
\end{lem}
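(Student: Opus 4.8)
The plan is to reproduce, inside $M(A\rtimes G)$, the sector calculus of Section 3 of \cite{ILP} and Section 4 of \cite{I1}, using the \Cs-valued weight $\hat E$ of Example \ref{dualweight} to pass between $M(A\rtimes G)$ and $M(A)$, and the hypothesis $M(A)\cap(A^G)'=\IC$ to produce the required Hilbert-space structure. I would first record the identities forced by the construction: $ew_\sigma=w_\sigma=w_\sigma p_\sigma$, the intertwining relations $w_\sigma x=\rho_\sigma(x)w_\sigma$ and $w_\sigma^{*}\rho_\sigma(x)=xw_\sigma^{*}$ for $x\in A^G$, the analogous relations for $w_{\sigma,i}=w_\sigma f_{\sigma,1,i}$, together with $w_{\sigma,i}^{*}w_{\sigma,j}=f_{\sigma,i,j}$ and $w_{\sigma,i}w_{\sigma,j}^{*}=\delta_{ij}e$. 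Applying Lemma \ref{lem dual} (1) to $\hat E(w_{\sigma,i}x)$ and $\hat E(\rho_\sigma(x)w_{\sigma,i})$ shows $v_{\sigma,i}\in\cH_\sigma$, and Lemma \ref{lem dual} (2) gives $ev_{\sigma,i}=\frac{1}{\sqrt{\dim\sigma}}\,w_{\sigma,i}$. The auxiliary fact I would isolate, and use throughout, is: \emph{if $y\in M(A)$ and $ey=0$, then $y=0$}; for this, $ey\in\cN_{\hat E}$, so $y^{*}ey=(ey)^{*}(ey)\in\cM_{\hat E}$, and $\hat E(y^{*}ey)=y^{*}\hat E(e)y=y^{*}y$ by Lemma \ref{lem dual} (1), provided $\hat E(e)=1$. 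The latter, as well as $\hat E(f_{\sigma,i,j})=(\dim\sigma)\,\delta_{ij}$, follows by identifying the restriction of $\hat E$ to $\rg(G)\subset M(A\rtimes G)$ with the weighted sum of the traces $(\dim\sigma)\,\mathrm{Tr}_{B(V_\sigma)}$ on the blocks, exactly as in the finite-group computation at the end of Example \ref{dualweight}.

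Granting these, parts (1)--(3) and (5) go through essentially mechanically. For (1), the map $T\mapsto w_\pi^{*}Tw_\sigma$ is a linear isomorphism from $(\rho_\sigma,\rho_\pi)$ onto $p_\pi\big(M(A\rtimes G)\cap(A^G)'\big)p_\sigma$, with inverse $S\mapsto w_\pi Sw_\sigma^{*}$; injectivity reduces to the auxiliary fact, and since $M(A\rtimes G)\cap(A^G)'\cong\prod_\tau B(V_\tau)$ with $p_\sigma,p_\pi$ minimal, this corner is $\IC$ when $[\sigma]=[\pi]$ and $0$ otherwise, giving irreducibility of $\rho_\sigma$ and $(\rho_\sigma,\rho_\pi)=0$. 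For (3): $v_{\sigma,j}^{*}v_{\sigma,i}\in M(A)\cap(A^G)'=\IC$, and applying $\hat E$ to $(\dim\sigma)\,v_{\sigma,j}^{*}ev_{\sigma,i}=w_{\sigma,j}^{*}w_{\sigma,i}=f_{\sigma,j,i}$ yields $v_{\sigma,j}^{*}v_{\sigma,i}=\delta_{ij}$; the map $\cH_\sigma\ni T\mapsto w_\sigma^{*}T\in p_\sigma A_\sigma$ is injective (again by the auxiliary fact, since $w_\sigma^{*}T=0$ forces $eT=0$), so $\dim\cH_\sigma\le\dim\sigma$ and hence $\{v_{\sigma,i}\}$ is an orthonormal basis; finally $P_\sigma:=\sum_iv_{\sigma,i}v_{\sigma,i}^{*}$ satisfies $eP_\sigma e=\frac{1}{\dim\sigma}\sum_iw_{\sigma,i}w_{\sigma,i}^{*}=e$, so $e(1-P_\sigma)=0$ and the auxiliary fact gives $P_\sigma=1$, the Cuntz relation. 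Part (5) follows at once, since the $\{v_{\sigma,i}\}$ are isometries implementing $\rho_\sigma$ with $\sum_iv_{\sigma,i}v_{\sigma,i}^{*}=1$, so $\rho_\sigma$ has finite index with $d(\rho_\sigma)=\dim\sigma$. And (2) holds because $\cH_\sigma^{*}e\cH_\sigma\supseteq\mathrm{span}\{v_{\sigma,i}^{*}ev_{\sigma,j}\}=\mathrm{span}\{\tfrac{1}{\dim\sigma}f_{\sigma,i,j}\}=A_\sigma$, while conversely $S^{*}eT=(w_\sigma^{*}S)^{*}(w_\sigma^{*}T)\in A_\sigma p_\sigma A_\sigma=A_\sigma$ for $S,T\in\cH_\sigma$.

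For (4), $\alpha$ preserves $\cH_\sigma$ (since $\alpha_g$ fixes $A^G$ pointwise, $\alpha_g(T)x=\alpha_g(Tx)=\alpha_g(\rho_\sigma(x)T)=\rho_\sigma(x)\alpha_g(T)$), giving a $\dim\sigma$-dimensional unitary representation which is irreducible --- its self-intertwiners are computed by $(\rho_\sigma,\rho_\sigma)=\IC$ --- and inequivalent to $\alpha|_{\cH_\pi}$ for $\pi\ne\sigma$, as $G$-equivariant maps $\cH_\sigma\to\cH_\pi$ come from $(\rho_\sigma,\rho_\pi)=0$. The remaining point, that this abstract representation is $\sigma$ itself, is obtained by tracking the construction back through the isotypic block $A_\sigma\cong B(V_\sigma)$, with its conjugation $G$-action, as in Section 3 of \cite{ILP} and Section III.2 of \cite{AHKT}; concretely one verifies $\alpha_g(v_{\sigma,i})=\sum_j\sigma(g)_{ji}v_{\sigma,j}$ for a suitable matrix realization of $\sigma$. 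Finally, for (6): since $\overline{\sigma}\in\widehat{G}$, the coevaluation of the representation category of $G$ gives a $G$-fixed vector in $\cH_\sigma\otimes\cH_{\overline{\sigma}}\cong V_\sigma\otimes V_{\overline{\sigma}}$, which under the injective product map $\cH_\sigma\otimes\cH_{\overline{\sigma}}\to\cH_{\rho_\sigma\circ\rho_{\overline{\sigma}}}$ produces, after normalization, an isometry $\overline{R}_\sigma\in(\id_{A^G},\rho_\sigma\circ\rho_{\overline{\sigma}})$, and symmetrically an isometry $R_\sigma\in(\id_{A^G},\rho_{\overline{\sigma}}\circ\rho_\sigma)$. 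The conjugate equations with $d(\rho_\sigma)=\dim\sigma$ are the standard normalization of the duality morphisms, and $\cH_\sigma^{*}\overline{R}_\sigma\subseteq\cH_\sigma^{*}\cH_\sigma\cH_{\overline{\sigma}}=\IC\,\cH_{\overline{\sigma}}=\cH_{\overline{\sigma}}$, with equality by non-degeneracy; hence $\overline{\rho}_\sigma\sim\rho_{\overline{\sigma}}$.

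I expect the genuinely delicate step to be the last part of (4): finite-dimensionality of $\cH_\sigma$, its orthonormal basis, and the Cuntz relation all fall out cleanly from the auxiliary fact $ey=0\Rightarrow y=0$, but identifying the resulting $\dim\sigma$-dimensional irreducible $G$-representation with $\sigma$ itself --- as opposed to with merely some irreducible of the right dimension --- requires care about the identification $M(A\rtimes G)\cap(A^G)'\cong\prod_\sigma B(V_\sigma)$, and is the point for which I would lean hardest on Section 3 of \cite{ILP}.
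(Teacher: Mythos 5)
Your treatment of (1)--(3) and (5) is essentially the paper's: the same reduction of $(\rho_\sigma,\rho_\pi)$ to the corner $p_\pi\bigl(M(A\rtimes G)\cap(A^G)'\bigr)p_\sigma$, the same use of the bimodule property of $\hat E$ and of $\hat E(e)=1$, $\hat E(f_{\sigma,i,j})=\dim(\sigma)\delta_{ij}$ to get orthonormality, finite-dimensionality and the Cuntz relation (your auxiliary fact ``$ey=0\Rightarrow y=0$'' plays the role that faithfulness of $E$ plays in the paper, and your converse inclusion in (2) via $S^{*}eT=(w_\sigma^{*}S)^{*}(w_\sigma^{*}T)$ is a slightly more direct variant of the paper's block-orthogonality argument). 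For (6) you genuinely deviate: the paper takes the conjugate $\overline\rho_\sigma$ supplied by Lemma \ref{conjugate}, computes the matrix coefficients of $\alpha$ on $\cH_{\overline\rho_\sigma}$ via $u_i=\sqrt{\dim(\sigma)}\,v_{\sigma,i}^{*}\overline R_{\rho_\sigma}$, and then invokes the injectivity of $\theta$ in Lemma \ref{isom}; you instead build $R_\sigma$, $\overline R_\sigma$ directly as the $G$-fixed (coevaluation) vectors in $\cH_{\overline\sigma}\cH_\sigma$ and $\cH_\sigma\cH_{\overline\sigma}$ and verify the conjugate equations. That route works (with matched orthonormal bases one gets exactly $\overline R_\sigma^{*}\rho_\sigma(R_\sigma)=R_\sigma^{*}\rho_{\overline\sigma}(\overline R_\sigma)=\tfrac{1}{\dim\sigma}$, and $v_{\sigma,j}^{*}\overline R_\sigma=\tfrac{1}{\sqrt{\dim\sigma}}\bar v_j$ gives $\cH_\sigma^{*}\overline R_\sigma=\cH_{\overline\sigma}$), and it has the advantage of not forward-referencing Lemma \ref{isom}; but note that it leans entirely on (4), since $G$-invariance of the coevaluation vector is exactly what puts $R_\sigma,\overline R_\sigma$ in $M(A^G)$.

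The one real soft spot is (4), which you yourself flag and then defer. Your hint that the identification comes from ``the isotypic block $A_\sigma\cong B(V_\sigma)$ with its conjugation $G$-action'' cannot work as stated: the conjugation action only sees $\mathrm{Ad}\,\sigma\cong\sigma\otimes\overline\sigma$, which does not distinguish $\sigma$ from, say, $\sigma\otimes\chi$ or $\overline\sigma$. What pins down $\sigma$ is the left-translation structure on the $\sigma$-isotypic component of $\rg(G)$ together with the relation $e\alpha_g(v_{\sigma,i})=ev_{\sigma,i}\lambda_{g^{-1}}$ (since $\alpha_g=\mathrm{Ad}\,\lambda_g$ on $M(A)\subset M(A\rtimes G)$ and $e\lambda_g=e$). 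This is exactly how the paper argues: it computes $v_{\sigma,i}^{*}\alpha_g(v_{\sigma,j})=\hat E(v_{\sigma,i}^{*}ev_{\sigma,j}\lambda_{g^{-1}})=\dim(\sigma)^{-1}\hat E(f_{\sigma,i,j}\lambda_{g^{-1}})=\langle\sigma(g)\xi_j,\xi_i\rangle$. With your own toolkit the same point is two lines: $f_{\sigma,1,i}\lambda_{g^{-1}}=\sum_j\langle\sigma(g)\xi_i,\xi_j\rangle f_{\sigma,1,j}$ gives $e\bigl(\alpha_g(v_{\sigma,i})-\sum_j\langle\sigma(g)\xi_i,\xi_j\rangle v_{\sigma,j}\bigr)=0$, and your auxiliary fact finishes it. So the missing step is easily closed, but as written the appeal to the conjugation action (or to the references) does not yet constitute a proof of (4), and (6) inherits this dependence.
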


\begin{proof}
For (1),
we show the irreducibility of $\rho_{\sigma}$ as follows:
\begin{align*}
M(A^{G})\cap\rho_{\sigma}(A^{G})^{\prime}
&\cong (M(A^{G})\cap\rho_{\sigma}(A^{G})^{\prime})e\\
&=w_{\sigma}w_{\sigma}^{*}M(A^{G})w_{\sigma}w_{\sigma}^{*}\cap(w_{\sigma}A^{G}w_{\sigma}^{*})^{\prime}\\
&\cong w_{\sigma}^{*}M(A^{G})w_{\sigma}\cap (A^{G}p_{\sigma})^{\prime}\\
&\subset p_{\sigma}(M(A\rtimes G)\cap (A^{G})^{\prime}) p_{\sigma}=\IC p_{\sigma}.
\end{align*}
To show $(\rho_{\sigma}, \rho_{\pi})=0$,
take $T\in(\rho_{\sigma}, \rho_{\pi})(\subset M(A^G))$.
By definition,
we have $w_{\pi}^{*}Tw_{\sigma}\in p_{\pi}(M(A\rtimes G)\cap (A^{G})^{\prime})p_{\sigma}=0$.
This implies $eT=eTe=0$.
Then $T=0$.

We show (2).
Since we have $\text{dim}(\sigma)v_{\sigma, i}^{*}ev_{\sigma, j}=f_{\sigma, i, j}$,
the inclusion $A_{\sigma}\subset\cH_{\sigma}^{*}e\cH_{\sigma}$ holds for every $\sigma\in\widehat{G}$.
By (1),
for all $\sigma\neq\pi\in\widehat{G}$,
we have $\cH_{\sigma}^{*}e\cH_{\sigma}\cH_{\pi}^{*}e\cH_{\pi}=\cH_{\sigma}^{*}e(\rho_{\pi}, \rho_{\sigma})\cH_{\pi}=0$.
Then,
each $\cH_{\sigma}^{*}e\cH_{\sigma}$ is contained in $M(A\rtimes G)\cap (A^{G})^{\prime}\cap (\prod_{\pi\neq\sigma}A_{\pi})^{\perp}$.
Since we have $\prod_{\pi\in\widehat{G}}A_{\pi}=M(A\rtimes G)\cap (A^{G})^{\prime}$,
$A_{\sigma}=\cH_{\sigma}^{*}e\cH_{\sigma}$ holds for every $\sigma\in\widehat{G}$.

Since $\{f_{\sigma, i, j}\}_{i,j}$ are matrix units of $A_{\sigma}\cong B(V_{\sigma})$,
there is an orthonormal basis $\{\xi_{i}\}_{i=1}^{\text{dim}(\sigma)}$ of $V_{\sigma}$ such that for all $i$ and $j$,
$f_{\sigma, i, j}=\text{dim}(\sigma)\int_{G}\langle\sigma(g)\xi_{j},\xi_{i}\rangle\lambda_{g}dg$ hold. 
Hence,
we have $v_{\sigma, i}^{*}v_{\sigma, j}=\hat{E}(v_{\sigma, i}^{*}ev_{\sigma, j})=\text{dim}(\sigma)^{-1}\hat{E}(f_{\sigma, i, j})=\delta_{i, j}$.
Since the conditional expectation $E$ is faithful and each $\rho_{\sigma}$ is irreducible,
a linear map $\cH_{\sigma}\ni u\mapsto v_{\sigma, 1}^{*}eu\in p_{\sigma}A_{\sigma}$ is injective. 
Then,
$\text{dim}\cH_{\sigma}\leq\text{dim}(p_{\sigma}A_{\sigma})=\text{dim}(\sigma)$ holds.
This implies that $\{v_{\sigma, i}\}_{i}$ consists orthogonal basis of $\cH_{\sigma}$.
To show the Cuntz relations,
it suffices to prove $\sum_{i=1}^{\rm{dim}(\sigma)}v_{\sigma, i}v_{\sigma, i}^{*}=1$.
Since the conditional expectation $E$ is faithful and $\sum_{i=1}^{\rm{dim}(\sigma)}v_{\sigma, i}v_{\sigma, i}^{*}\leq1$,
the following computation confirms $\sum_{i=1}^{\rm{dim}(\sigma)}v_{\sigma, i}v_{\sigma, i}^{*}=1$:
\begin{align*}
E\big(\sum_{i=1}^{\rm{dim}(\sigma)}v_{\sigma, i}v_{\sigma, i}^{*}\big)
=\hat{E}\big(e(\sum_{i=1}^{\rm{dim}(\sigma)}v_{\sigma, i}v_{\sigma, i}^{*})e\big)
=\frac{1}{\rm{dim}(\sigma)}\hat{E}\big(\sum_{i=1}^{\rm{dim}(\sigma)}e\big)
=1.
\end{align*}

To show (4),
we use the orthonormal basis $\{\xi_{i}\}_{i=1}^{\text{dim}(\sigma)}$ of $V_{\sigma}$ as above.
We have 
\begin{align*}
v_{\sigma, i}^{*}\alpha_{g}(v_{\sigma, j})&=\hat{E}(v_{\sigma, i}^{*}e\alpha_{g}(v_{\sigma, j}))\\
&=\hat{E}(v_{\sigma, i}^{*}ev_{\sigma, j}\lambda_{g^{-1}})\\
&=\text{dim}(\sigma)^{-1}\hat{E}(f_{\sigma, i, j}\lambda_{g^{-1}})\\
&=\langle\sigma(g)\xi_{j}, \xi_{i}\rangle,
\end{align*}
for every $g\in G$.
Therefore,
we get $(4)$.

For (5),
we define the conditional expectation $E_{\sigma}$ from $A^{G}$ onto $\rho_{\sigma}(A^{G})$ as 
$E_{\sigma}(x)=\rho_{\sigma}(E(v_{\sigma, 1}^{*}xv_{\sigma, 1}))=\text{dim}(\sigma)E(v_{\sigma, 1}E(v_{\sigma, 1}^{*}xv_{\sigma, 1})v_{\sigma, 1}^{*})$ for every $x\in A^G$.
Since $\alpha_{g}(v_{\sigma, 1})=\sum_{i}v_{\sigma, i}v_{\sigma, i}^{*}\alpha_{g}(v_{\sigma, 1})=\sum_{i}\langle\sigma(g)\xi_{1}, \xi_{i}\rangle v_{\sigma, i}$ and 
$\int_{G}\langle\sigma(g)\xi_{1}, \xi_{i}\rangle\overline{\langle\sigma(g)\xi_{1}, \xi_{j}\rangle}dg=\frac{\delta_{i, j}}{\text{dim}(\sigma)}$,
we have
\begin{align*}
E_{\sigma}(x)
&=\text{dim}(\sigma)E(v_{\sigma, 1}E(v_{\sigma, 1}^{*}xv_{\sigma, 1})v_{\sigma, 1}^{*})\\
&=\text{dim}(\sigma)\int_{G}\int_{G}\alpha_{g}(v_{\sigma, 1})\alpha_{h}(v_{\sigma, 1}^{*})x\alpha_{h}(v_{\sigma, 1})\alpha_{g}(v_{\sigma, 1}^{*})dgdh\\
&=\frac{1}{\text{dim}(\sigma)}\sum_{i, j}v_{\sigma, i}v_{\sigma, j}^{*}xv_{\sigma, j}v_{\sigma, i}^{*}
\end{align*}
for every $x\in A^{G}$.
Let $e_{i, j}:=v_{\sigma, i}v_{\sigma, j}^{*}$.
The conditional expectation $E_{\sigma}$ extends to a completely positive projection $\tilde{E}_{\sigma}$ of $A$ such that $\tilde{E}_{\sigma}(x)=\frac{1}{\text{dim}(\sigma)}\sum_{i, j}e_{i, j}xe_{j, i}$,
and this has a quasi basis $\{(\sqrt{\text{dim}(\sigma)}e_{i, j}, \sqrt{\text{dim}(\sigma)}e_{j, i})\}_{1\leq i, j\leq\text{dim}(\sigma)}$.
As stated after Definition \ref{quasi basis} and in Theorem \ref{p=e},
we have
\[
\mathrm{Ind}_{\mathrm{p}}\:\tilde{E}_{\sigma}=\mathrm{Ind}_{\mathrm{w}}\:\tilde{E}_{\sigma}={\rm dim}(\sigma)\sum_{i,j}e_{i,j}e_{j,i}={\rm dim}(\sigma)^{2}.
\]
Then,
the restriction $E_{\sigma}$ of $\tilde{E}_{\sigma}$ also has a finite index.

To show (6),
it suffices to show that $\overline{\rho}_{\sigma}\sim\rho_{\overline{\sigma}}$ by Lemma \ref{conjugate}.
We construct a $G$-equivariant unitary isomorphism between $(\cH_{\overline{\rho}_{\sigma}},\alpha)$ and $(V_{\overline{\sigma}}, \overline{\sigma})$. 
Let $R_{\rho_\sigma}$ and $\overline{R}_{\rho_\sigma}$ be isometries in Lemma \ref{conjugate}.
We set $u_i:=\sqrt{\text{dim}(\sigma)} v^{*}_{\sigma, i}\overline{R}_{\rho_\sigma}$,
then $\cH_{\overline{\rho}_{\sigma}}=\text{span}\{u_{i}\}_{i=1}^{\text{dim}(\sigma)}$ holds.
For every $g\in G$,
we have
\begin{align*}
u_{i}^{*}\alpha_{g}(u_{j})&=E(u_{i}^{*}\alpha_{g}(u_{j}))\\
&=\text{dim}(\sigma)\overline{R}_{\rho_\sigma}^{*}E(v_{\sigma, i}\alpha_{g}(v_{\sigma, j}^*))\overline{R}_{\rho_\sigma}\\
&=\text{dim}(\sigma)\sum_{k=1}^{\text{dim}(\sigma)}\overline{\langle\sigma(g)\xi_{j}, \xi_{k}\rangle}\overline{R}_{\rho_\sigma}^{*}E(v_{\sigma, i}v_{\sigma, k}^*)\overline{R}_{\rho_\sigma}\\
&=\sum_{k=1}^{\text{dim}(\sigma)}\overline{\langle\sigma(g)\xi_{j}, \xi_{k}\rangle}\overline{R}_{\rho_\sigma}^{*}\hat{E}(w_{\sigma, i}w_{\sigma, k}^*)\overline{R}_{\rho_\sigma}\\
&=\overline{\langle\sigma(g)\xi_{j}, \xi_{i}\rangle}\\
&=\langle\overline{\sigma}(g)\overline{\xi}_{j}, \overline{\xi}_{i}\rangle.
\end{align*}
Hence,
the linear map $\cH_{\overline{\rho}_{\sigma}}\ni u_{i}\mapsto\overline{\xi}_i\in V_{\overline{\sigma}}$ is a $G$-equivariant unitary isomorphism. 
The equivalence relation $\overline{\rho}_{\sigma}\sim\rho_{\overline{\sigma}}$ follows from the next lemma.
\end{proof}

We get the following as in Section 3 of [ILP].
\begin{lem}\label{isom}
Let $\cS_{0}:=\{[\rho]\in \mathrm{Sect}(A^{G})\mid \cH_{\rho}\neq0,\:\rho\text{ is irreducible.}\}$ and $\cS$ be the set of of all finite direct sums of $\cS_{0}$ in $\mathrm{Sect}(A^{G})$.
Then,
we have a natural bijection $\theta: \cS_{0}\rightarrow \widehat{G}$ defined by $\theta([\rho]):=[(\cH_{\rho}, \alpha)]$. 
Moreover,
$\theta$ extends to a bijection between $\cS$ and $\mathrm{Rep}_{\mathrm{f}}(G)$ which preserves direct sums and products.
\end{lem}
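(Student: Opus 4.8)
\emph{The plan is} to identify $\cS_0$ with $\{[\rho_\sigma]\mid\sigma\in\widehat{G}\}$ and then transport the representation theory of $G$ to $\mathrm{Sect}(A^G)$ through the spaces $\cH_\rho$. First I would record the basic structure of these spaces. For any endomorphism $\rho$ of $A^G$ and $S,T\in\cH_\rho$, the operator $S^*T$ commutes with $A^G$, so $S^*T\in M(A)\cap(A^G)'=\IC$; hence $\cH_\rho$, which is norm-closed in $M(A)$, is a Hilbert space with a scalar-valued inner product, and from $T^*\alpha_g(T)\in\IC$ together with the strict continuity of $\alpha$ on $M(A)$ one deduces that $g\mapsto\alpha_g(T)$ is norm-continuous, so $(\cH_\rho,\alpha)$ is a unitary representation of the compact group $G$ and decomposes into finite-dimensional irreducibles. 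I would also note that, since $\alpha$ is trivial on $A^G$, it is trivial on $M(A^G)=M(A)^G$, and that if a Hilbert space of operators with scalar inner product admits one orthonormal basis $\{T_i\}$ with $\sum_iT_iT_i^*=1$ (a \emph{Cuntz basis}), then every orthonormal basis has this property. The key technical input is a rigidity statement: \emph{if $\rho,\rho'$ are endomorphisms of $A^G$ with $\cH_\rho,\cH_{\rho'}$ finite-dimensional and admitting Cuntz bases and $(\cH_\rho,\alpha)\cong(\cH_{\rho'},\alpha)$ as $G$-Hilbert spaces, then $\rho\sim\rho'$}; indeed, given a $G$-equivariant unitary $\Phi\colon\cH_{\rho'}\to\cH_\rho$ and a Cuntz basis $\{u_a\}$ of $\cH_{\rho'}$, the element $W:=\sum_a\Phi(u_a)u_a^*\in M(A)$ is a $G$-invariant unitary, hence lies in $M(A^G)$, and from $u_a^*\rho'(x)=xu_a^*$ and $\rho(x)\Phi(u_a)=\Phi(u_a)x$ one gets $W\rho'(x)W^*=\rho(x)$ for all $x\in A^G$.

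Next I would analyse $\cS_0$. For irreducible $\rho$ with $\cH_\rho\neq0$, choosing an irreducible $G$-subrepresentation of $\cH_\rho$ produces isometries $T_1,\dots,T_m\in\cH_\rho$ with orthogonal ranges and $\alpha_g(T_i)=\sum_j\sigma_{ji}(g)T_j$ for some irreducible $\sigma$; then $p:=\sum_iT_iT_i^*$ is a $G$-invariant projection that commutes with $\rho(A^G)$, so $p\in M(A^G)\cap\rho(A^G)'=\IC$ and $p=1$, whence (using $T_i^*S\in\IC$) $\cH_\rho=\mathrm{span}\{T_i\}$ is a single finite-dimensional irreducible $G$-Hilbert space carrying a Cuntz basis. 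Therefore $\theta([\rho]):=[(\cH_\rho,\alpha)]\in\widehat{G}$ is well defined (an intertwiner $U\in M(A^G)$ of $\rho\sim\rho'$ is $G$-invariant and $T\mapsto UT$ is a $G$-equivariant unitary $\cH_\rho\to\cH_{\rho'}$), it is surjective by Lemma \ref{endom}(4) applied to the $\rho_\sigma$, and it is injective by the rigidity statement. Hence $\theta\colon\cS_0\to\widehat{G}$ is a bijection and $\cS_0=\{[\rho_\sigma]\mid\sigma\in\widehat{G}\}$.

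To pass to $\cS$, for a representative $\rho=\bigoplus_kS_{(k)}\rho_{\sigma_k}(\cdot)S_{(k)}^*$ (isometries $S_{(k)}\in M(A^G)$, $\sum_kS_{(k)}S_{(k)}^*=1$) one obtains the $G$-equivariant identification $\cH_\rho=\bigoplus_kS_{(k)}\cH_{\sigma_k}$, so $\cH_\rho$ is finite-dimensional, carries the Cuntz basis $\{S_{(k)}v_{\sigma_k,l}\}$ (using Lemma \ref{endom}(3)), and $(\cH_\rho,\alpha)\cong\bigoplus_k(V_{\sigma_k},\sigma_k)$; thus $\theta([\rho]):=[(\cH_\rho,\alpha)]\in\mathrm{Rep}_{\mathrm{f}}(G)$ is well defined, surjective (take $\rho=\bigoplus_k\rho_{\sigma_k}$ for a decomposition $\pi\cong\bigoplus_kV_{\sigma_k}$), and injective by the rigidity statement. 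Compatibility with $\oplus$ is the $G$-equivariant identification $\cH_{\rho_1\oplus\rho_2}\cong\cH_{\rho_1}\oplus\cH_{\rho_2}$ via $T\mapsto(S_1^*T,S_2^*T)$. For products, the map $\cH_{\rho_1}\otimes\cH_{\rho_2}\to\cH_{\rho_1\circ\rho_2}$, $S\otimes R\mapsto SR$, is $G$-equivariant, isometric (the inner products being scalar), and onto (writing $T=\sum_iS_iS_i^*T$ with $S_i^*T\in\cH_{\rho_2}$), so $\cH_{\rho_1\circ\rho_2}$ is finite-dimensional, carries the Cuntz basis $\{S_iR_j\}$, and is $G$-equivariantly isomorphic to $\cH_{\rho_1}\otimes\cH_{\rho_2}\cong\bigoplus_mV_{\sigma_m}$; setting $\rho':=\bigoplus_m\rho_{\sigma_m}\in\cS$, the rigidity statement gives $\rho_1\circ\rho_2\sim\rho'$, so $[\rho_1][\rho_2]\in\cS$ and $\theta([\rho_1][\rho_2])=[(\cH_{\rho_1}\otimes\cH_{\rho_2},\alpha)]=\theta([\rho_1])\otimes\theta([\rho_2])$.

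The step I expect to be the main obstacle is the $G$-equivariance bookkeeping — making sure that every operator assembled from the spaces $\cH_\rho$ which ought to intertwine endomorphisms is genuinely fixed by $\alpha$, which rests on $\alpha$ being trivial on $M(A^G)=M(A)^G$ — together with the collapse of $\cH_\rho$ to a single irreducible for irreducible $\rho$, which uses $M(A^G)\cap\rho(A^G)'=\IC$.
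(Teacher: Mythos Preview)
Your argument is correct, and it takes a genuinely different route from the paper's proof. The paper establishes injectivity on $\cS_0$ by working inside $\cL(\cE_E)$: it uses the strictly convergent decomposition $1=\sum_{\sigma,i}v_{\sigma,i}^{*}ev_{\sigma,i}$ together with the conditional expectation $E$ to show that for any irreducible $\rho$ with $\cH_\rho\neq0$ and any nonzero $w\in\cH_\rho$, some $E(v_{\sigma,i}w^{*})\in(\rho,\rho_\sigma)$ is nonzero, hence $\rho\sim\rho_\sigma$ for a unique $\sigma$; the extension to $\cS$ then appeals to the uniqueness of irreducible decompositions of finite index endomorphisms (Lemma~4.1 of \cite{I1}), and the compatibility with sums and products is left as a straightforward calculation. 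By contrast, you bypass the Jones projection entirely: your rigidity lemma (constructing the $G$-invariant unitary $W=\sum_a\Phi(u_a)u_a^{*}$ from Cuntz bases) gives injectivity directly, your projection argument $p=\sum_iT_iT_i^{*}\in M(A^G)\cap\rho(A^G)'=\IC$ shows in one stroke that $\cH_\rho$ is a single irreducible $G$-module with a Cuntz basis, and the same rigidity lemma handles the extension to $\cS$ without invoking \cite{I1}. Your treatment also makes the product compatibility explicit via the $G$-equivariant isometry $\cH_{\rho_1}\otimes\cH_{\rho_2}\to\cH_{\rho_1\circ\rho_2}$. The paper's proof is shorter given the machinery already assembled in Lemma~\ref{endom}, but your argument is more self-contained and isolates the mechanism (Cuntz bases plus $G$-equivariance forcing unitary equivalence of the endomorphisms) more transparently.
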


\begin{proof}
First,
we show that $\theta$ is a bijection between $\cS_{0}$ and $\widehat{G}$.
Thanks to (4) of Lemma \ref{endom},
there is an irreducible sector $[\rho_{\sigma}]\in\cS_{0}$ such that $\theta([\rho_{\sigma}])=\sigma$ for any $\sigma\in\widehat{G}$.
Conversely,
suppose $\rho$ is an irreducible endomorphism of $A^{G}$ with $\cH_{\rho}\neq0$.
We have $1_{M(A\rtimes G)}=\sum_{\sigma, i}v_{\sigma, i}^{*}ev_{\sigma, i}$, 
where the right-hand side converges in the strict topology.
Then,
for any $w\in\cH_{\rho}\setminus\{0\}(\subset M(A))$ and any $a\in A^{G}$,
we get $w^{*}ea=\sum_{\sigma, i}v_{\sigma, i}^{*}E(v_{\sigma, i}w^{*})ea$ and $E(v_{\sigma, i}w^{*})\in(\rho, \rho_{\sigma})$.
Hence,
there is at least one $\sigma\in\widehat{G}$ such that $(\rho, \rho_{\sigma})\neq0$.
By the irreducibility of $\rho$ and (1) of Lemma \ref{endom},
we can take a unique $\sigma$ and a unitary $u\in M(A^G)$ such that $(\rho, \rho_{\sigma})=\IC u$.
Therefore,
we have $\cH_{\rho}=u^{*}\cH_{\sigma}$ and $\theta([\rho])=\theta([\rho_{\sigma}])$.
This means $\theta$ is a well-defined bijection between $\cS_0$ and $\widehat{G}$.

Next,
we consider the natural extension $\tilde{\theta}\colon\cS\ni[\rho]\mapsto[(\cH_{\rho}, \alpha)] \in \mathrm{Rep}_{\mathrm{f}}(G)$ of $\theta$.
By the uniqueness of irreducible decompositions of finite index endomorphisms
(see Lemma 4.1 of \cite{I1}),
$\tilde{\theta}$ is bijective.
The rest of the statement follows from straightforward calculations.
\end{proof}
\begin{rmk}
\label{rmk isom}
Using this lemma,
for every $\sigma\in\mathrm{Rep}_\mathrm{f}(G)$,
we can take a unique endomorphism $\rho_{\sigma}\in\mathrm{End}(A^G)$,
up to equivalence,
such that $\tilde{\theta}(\rho_{\sigma})=\sigma$.
\end{rmk}
\begin{rmk}\label{rmk quasi product}
   For every $\sigma\in\widehat{G}$,
   $\rho_{\sigma}$ can be identified with $\check{\alpha}_{\sigma}$ in Section 2.4 of \cite{I2}.
   When $A$ is separable,
   by Lemma 2.14 of \cite{I2},
   $\alpha$ is quasi-product if and only if every $\rho_{\sigma}$ is properly outer.
\end{rmk}
In Section 4,
we discuss \Cs-irreducibility of the inclusion $A^{G}\subset A$ to show the Galois correspondence.
\Cs-irreducibility of inclusions arising from discrete crossed products is discussed in Theorem 5.8 of \cite{R}.
In this section,
Lemma \ref{decomp} shows that $A$ admits a crossed product decomposition as in Section 6 of \cite{I1}. 
Before presenting the last lemma of this section, we recall the definition of \Cs-irreducible inclusions.
\begin{defn}[See Definition 3.1 of \cite{R} for unital inclusions]
A nondegenerate inclusion $B\subset A$ of \Cs-algebras is said to be \Cs-irreducible if every intermediate \Cs-subalgebra is simple.
\end{defn}
\begin{rmk}\label{rmk irrd}
When the inclusion $B\subset A$ is \Cs-irreducible,
we have $M(A)\cap B^{\prime}=\IC$.
(See Remark 3.8 of \cite{R} for unital inclusions.)
If $M(A)\cap B^{\prime}\neq\IC$,
then there are nonzero positive elements $x,y$ in $M(A)\cap B^{\prime}$ with $xy=yx=0$.
The intermediate \Cs-subalgebra $C:=\overline{B+xAx+yAy}$ contains a non-trivial ideal $\overline{xAx}$.
\end{rmk}

The following lemma was pointed out by Izumi
(see Lemma 2.15 of \cite{I2}).
For the reader's convenience, we give details of the proof.
\begin{lem}[Lemma 2.15 of \cite{I2}]\label{decomp}
Let $A_{0}:=\mathrm{span}\{wx\mid w\in \cH_{\sigma}^{*},\; x\in A^{G},\; \sigma\in\widehat{G}\}$.
Then,
$A_{0}$ is a dense $*$-subalgebra of $A$,
which is closed under the action of $G$.
\end{lem}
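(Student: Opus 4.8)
The plan is to show three things about $A_0$: that it is a $*$-subalgebra, that it is $G$-invariant, and that it is dense in $A$.

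For the algebra structure, first I would observe that $A_0$ is clearly a linear subspace by construction. Closure under multiplication follows from the identity $\cH_\sigma \cH_\pi \subset \bigoplus_\tau \cH_\tau$ (a finite sum over those $\tau$ appearing in the tensor product decomposition of $\sigma \otimes \pi$), which is the content of Lemma~\ref{isom}: the $\cH_\sigma$ behave like the representation spaces, so their products decompose into finitely many irreducibles, each of which gives a $\cH_\tau$. Concretely, for $w \in \cH_\sigma^*$, $w' \in \cH_\pi^*$, $x, x' \in A^G$, one has $wx\, w'x' = w\, \rho_\sigma(\cdots)\, w' \cdots$ — using $x w' = w' \rho_\pi^{-1}$-type intertwining relations — and ultimately $w x w' x' \in \mathrm{span}\{\cH_\tau^* A^G\}$ for finitely many $\tau$. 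For closure under adjoint, $(wx)^* = x^* w^*$ with $w^* \in \cH_\sigma$; I would use part (6) of Lemma~\ref{endom}, namely $\cH_\sigma^* \overline{R}_\sigma = \cH_{\overline\sigma}$, together with the intertwiner $\overline{R}_\sigma \in (\id, \rho_\sigma \rho_{\overline\sigma})$, to rewrite $x^* w^*$ as an element of $\cH_{\overline\sigma}^* A^G$ (moving $x^*$ past $w^*$ via the intertwining relation picks up a factor in $\rho_\sigma(A^G)$, which is absorbed). So $A_0^* = A_0$.

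For $G$-invariance, each $\cH_\sigma$ is $G$-invariant by construction (it was defined as $\{T \in M(A) : Tx = \rho_\sigma(x)T\}$ with $\rho_\sigma(x) \in A^G$ fixed, so $\alpha_g(T)$ again intertwines), hence $\cH_\sigma^*$ is $G$-invariant, and $A^G$ is fixed pointwise, so $\alpha_g(wx) = \alpha_g(w) x \in \cH_\sigma^* A^G \subset A_0$.

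The main obstacle is density. The natural approach is to use the decomposition $1_{M(A\rtimes G)} = \sum_{\sigma, i} v_{\sigma,i}^* e v_{\sigma,i}$ (strict convergence), established in the proof of Lemma~\ref{isom}. Given $a \in A$, I would write $\eta(a) = \sum_{\sigma, i} v_{\sigma,i}^* e v_{\sigma,i} \eta(a)$ where $e v_{\sigma,i}\eta(a) = \eta(E(v_{\sigma,i} a)) \in \eta(A^G)$; since $v_{\sigma,i} \in \cH_\sigma$, we get $v_{\sigma,i}^* \in \cH_\sigma^*$, so each term $v_{\sigma,i}^* E(v_{\sigma,i}a)$ lies in $\cH_\sigma^* A^G \subset A_0$. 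The convergence needs care — this identity a priori holds in $\cE_E$ or strictly in $M(A\rtimes G)$, not in norm in $A$ — so I would pass through the faithful conditional expectation $E$, or equivalently the Hilbert module picture: for a suitable approximate unit / spectral cutoff one estimates $\| a - \sum_{\text{finite}} v_{\sigma,i}^* E(v_{\sigma,i} a)\|$. Alternatively, and perhaps more cleanly, note that the Peter–Weyl decomposition of the $G$-action $\alpha$ on $A$ gives $A = \overline{\bigoplus_\sigma A_\sigma}$ where $A_\sigma$ is the $\sigma$-spectral subspace, and one checks $A_\sigma \subset \cH_\sigma^* A^G$ directly using that $\cH_\sigma^*$ furnishes, by part (4) of Lemma~\ref{endom}, a copy of $V_\sigma$ inside $M(A)$ carrying the representation $\sigma$, so that $\cH_\sigma^* A^G$ already exhausts the $\sigma$-isotypic part of $A$. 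Either way, density of $\bigoplus_\sigma A_\sigma$ in $A$ (standard for compact group actions) finishes the proof.
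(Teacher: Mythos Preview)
Your proposal is correct and matches the paper's approach for the $*$-algebra structure and $G$-invariance. For density, the paper takes precisely your second route, but makes the ``direct check'' explicit by computing $P_\sigma(x) := \int_G \chi_\sigma(g)\alpha_g(x)\,dg = \sum_{i=1}^{\dim\sigma} v_{\sigma,i}^* E(v_{\sigma,i} x) \in \cH_\sigma^* A^G$ for each fixed $\sigma$ (recycling the finite partial sums from your first approach and thereby sidestepping the norm-convergence issue over $\sigma$), and then concludes via a Peter--Weyl duality argument.
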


\begin{proof}
First,
we show that $A_0$ is a $G$-invariant $*$-subalgebra of $A$.
Since $\alpha_{g}(\cH_{\sigma})=\cH_{\sigma}$ for any $g\in G$ and $\sigma\in\widehat{G}$,
$A_{0}$ is $G$-invariant.
For any $\sigma\in\widehat{G}$ and $i$,
the inclusion
\[
(v_{\sigma, i}^{*}A^{G})^{*}=A^{G}v_{\sigma, i}=A^{G}v_{\overline{\sigma}, i}^{*}R_{\sigma}=v_{\overline{\sigma}, i}^{*}\rho_{\overline{\sigma}}(A^{G})R_{\sigma}\subset A_{0}
\]
holds.
Then $A_{0}^{*}=A_{0}$.
By Lemma \ref{isom},
we have $(\iota, \iota\circ\rho_{\pi}\circ\rho_{\sigma})=\cH_{\pi}\cH_{\sigma}\cong\cH_{\pi}\otimes\cH_{\sigma}$ and there are isometries $\{S_{i}\}_{i=1}^{n}$ in $M(A^{G})$ with the Cuntz relation such that $\cH_{\pi}\cH_{\sigma}=\sum_{i}S_{i}\cH_{\sigma_{i}}$,
where $\sigma_{i}\in\widehat{G}$ are not necessarily pairwise orthogonal and $\sigma\otimes\pi$ is decomposed into $\oplus_{i}\sigma_{i}$.
Thus,
we have
\[
\cH_{\sigma}^{*}A^{G}\cH_{\pi}^{*}A^{G}=\cH_{\sigma}^{*}\cH_{\pi}^{*}\rho_{\pi}(A^{G})A^{G}=\sum_{i}\cH_{\sigma_{i}}^{*}S_{i}^{*}A^{G}\subset A_{0}
\]
for every $\sigma, \pi\in\widehat{G}$.
Consequently,
$A_{0}$ is a $G$-invariant $*$-subalgebra of $A$.

Let $\chi_{\sigma}$ be the character of $\sigma\in \widehat{G}$ and $P_{\sigma}\colon A\rightarrow A$ be a linear map defined by $P_{\sigma}(x):=\int_{G}\chi_{\sigma}(g)\alpha_{g}(x)dg$ for $x\in A$.
Since we have 
\begin{align*}
    \sum_{i=1}^{{\rm dim}(\sigma)}v_{\sigma,i}^{*}E(v_{\sigma,i}x)
    =&\sum_{i=1}^{{\rm dim}(\sigma)}v_{\sigma,i}^{*}\int_{G}\alpha_{g}(v_{\sigma,i}x)dg\\
    =&\sum_{i=1}^{{\rm dim}(\sigma)}v_{\sigma,i}^{*}\int_{G}\sum_{j=1}^{{\rm dim}(\sigma)}v_{\sigma,j}(v_{\sigma,j}^{*}\alpha_{g}(v_{\sigma,i}))\alpha_{g}(x)dg\\
    =&\int_{G}\big(\sum_{i=1}^{{\rm dim}(\sigma)}v_{\sigma,i}^{*}\alpha_{g}(v_{\sigma,i})\big)\alpha_{g}(x)dg
    =P_{\sigma}(x),
\end{align*}
$P_{\sigma}(x)$ is contained in $A_{0}$ for all $x\in A$ and $\sigma\in\widehat{G}$.
If $A_{0}$ is not dense in $A$,
then there exists a non-zero linear functional $\varphi$ in $A^{*}$ with $\varphi(A_{0})=0$.
For every $x\in A$ and $\sigma\in\widehat{G}$,
\[
0=\varphi(P_{\sigma}(x))=\int_{G}\chi_{\sigma}(g)\varphi(\alpha_{g}(x))dg
\]
holds.
By the Peter-Weyl theorem,
this implies $\varphi(\alpha_{g}(x))=0$ for every $g\in G$ and every $x\in A$,
which contradicts the assumption that $\varphi$ is non-zero.
Then,
$A_{0}$ is dense in $A$.
\end{proof}

\section{Inclusions of simple \Cs-algebras with conditional expectations}
As discussed in the previous section,
under some assumptions,
an inclusion $A^{G}\subset A$ generated by a compact group action $\alpha\colon G\acts A$ has a similar property to those of inclusions $B\subset B\rtimes_{r}\Gamma$ generated by discrete group actions.
Hern{\'a}ndez Palomares and Nelson showed that if a unital inclusion $A\subset B=A\rtimes \IB$ is generated by an outer action of a unital tensor category $\cC$ and a $\cC$-graded \Cs-algebra $\IB$,
then the lattice $\{D\mid A\subset D\subset B,\;
A\subset D\in\textbf{\Cs-disc}\}$
of intermediate discrete inclusions is isomorphic to the lattice of $\cC$-graded \Cs-subagebras of $\IB$.
(See Theorem G of \cite{NP} for details.)
When a discrete group action $\Gamma\acts A$ on a unital simple \Cs-algebra is outer,
for every intermediate \Cs-algebra $D$ between $A$ and $A\rtimes_{r}\Gamma$,
the inclusion $A\subset D$ is automatically discrete.
(See Corollary 5.14 of \cite{NP}.)
In this section,
we consider inclusions such that the lattices of all intermediate subalgebras have rigid structures.
For example, 
Section 3 of \cite{R} discusses the relative Diximer property and the relative excision property for inclusions of \Cs-algebras.
We use the following notation.

\begin{no}
Let $B\subset A$ be a nondegenerate inclusion of \Cs-algebras. 
We say that $B\subset A$ satisfies Condition ($*$) if there is a positive element $b_{0}\in B$ with $\|b_0\|=1$ which satisfies the following.
\begin{itemize}
\item For any element $x\in A$ and $\epsilon>0$,
there exist finitely many elements $h_{1}, \dots, h_{n}\in B$ and $y\in B\text{  s.t. }$
\begin{equation}\label{ridgid}
\|\sum_{i}h_{i}^{*}h_{i}\|\leq1,
\;\|\sum_{i}h_{i}^{*}b_{0}h_{i}\|\approx_{\epsilon}1,
\text{ and } \sum_{i}h_{i}^{*}xh_{i}\approx_{\epsilon}y.
\end{equation}
\end{itemize}
\end{no}
First,
we check the following simple properties of Condition ($*$).

\begin{lem}\label{easy}
Suppose a nondegenerate inclusion $B\subset A$ satisfies Condition ($*$), 
then the following hold:
\begin{itemize}
\item[(1)] If $C$ is an intermediate \Cs-algebra between $B$ and $A$, 
then the inclusions $B\subset C$ and $C\subset A$ also satisfy Condition ($*$).

\item[(2)] The inclusion $B\otimes\IK\subset A\otimes\IK$ also satisfies Condition ($*$).

\item[(3)] If $B$ is simple,
then for any finitely many elements $x_{1}, x_{2},\dots, x_{m}\in A$,
any positive element $b\in B$,
and any $\epsilon>0$,
there exist finitely many elements $h_{1}, \dots, h_{n}\in B$ and $y_{1},\dots, y_{m}\in B$ satisfying
\[
\|\sum_{i}h_{i}^{*}h_{i}\|\leq1,
\;\sum_{i}h_{i}^{*}bh_{i}\approx_{\epsilon}b,\text{ and }
\sum_{i}h_{i}^{*}x_{j}h_{i}\approx_{\epsilon}y_{j}
\] for all $j$. 

\item[(4)] If there is a conditional expectation $E$ from $A$ onto $B$ and $B$ is simple,
then for any positive element $b\in B$,
finitely many elements $x_1,\dots, x_m\in A$,
and any $\epsilon>0$,
there exist finitely many elements $h_{1}, \dots, h_{n}\in B$ such that
\[
\|\sum_{i}h_{i}^{*}h_{i}\|\leq1,
\;\sum_{i}h_{i}^{*}bh_{i}\approx_{\epsilon}b, 
\text{ and } \sum_{i}h_{i}^{*}x_{j}h_{i}\approx_{\epsilon}E(\sum_{i}h_{i}^{*}x_{j}h_{i})
\]
hold for all $j$.

\item[(5)] If $B$ is simple and there exists a faithful conditional expectation $E$ from $A$ onto $B$,
then $A$ is also simple.
Moreover,
the inclusion $B\subset A$ is \Cs-irreducibe.
\end{itemize}
\end{lem}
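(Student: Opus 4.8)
The plan is to dispatch (1) by hand, to put the real work into (3), to deduce (4) and (5) formally from it, and to obtain (2) as a stabilisation of the argument for (3). For (1): the same witness $b_0\in B$ works for $B\subset C$, since the data $h_1,\dots,h_n,y$ produced by Condition~($*$) for $B\subset A$ applied to $x\in C\subset A$ already lies in $B\subset C$; and $b_0\in B\subset C$ serves as a witness for $C\subset A$, the very same data verifying the requirements of Condition~($*$) because $B\subset C$.

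For (3) I would combine Condition~($*$) with two facts about the simple \Cs-algebra $B$. (i) A comparison lemma: for $c,d\in B_+$ with $\|d\|=1$ and $\|c\|\le1$ and any $\delta>0$ there are $w_1,\dots,w_p\in B$ with $\|\sum_k w_k^*w_k\|\le1$ and $\sum_k w_k^* d\,w_k\approx_\delta(1-\delta)c$. One obtains this by cutting $d$ down, via functional calculus, to a scalar multiple of a norm-one positive element that is of the form $w_0^* d\,w_0$ with $\|w_0^*w_0\|\le1$, and then using that this element is full in the simple algebra $B$ (so that $\overline{BeB}=B$ and Cuntz comparison applies) to reshape it into an approximation of $c$, renormalising to keep the norm bound. (ii) A simultaneous form of Condition~($*$) that preserves the witness: given $x_1,\dots,x_m\in A$ and $\delta>0$ there are $g_j\in B$ with $\|\sum_j g_j^*g_j\|\le1$, with each $\sum_j g_j^* x_l g_j$ within $\delta$ of $B$, and with $\sum_j g_j^* b_0 g_j\approx_\delta b_0$; this is proved by treating $x_1,\dots,x_m$ one at a time, at each stage applying Condition~($*$) to the image of $x_l$ under the compression $z\mapsto\sum h^*zh$ built so far, and then composing on the left with a norm-$\le1$ reshaping from (i) that pushes the (automatically norm-close to $1$) image of $b_0$ back to $\approx b_0$; since the reshaping lives in $B$ it does not disturb the ``lands in $B$'' property of the $x_i$'s already handled, and the errors can be kept small by choosing the parameters geometrically. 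Granting (i) and (ii), (3) follows: after rescaling assume $\|b\|=1$; by (i) reshape $b$ towards $b_0$ by a norm-$\le1$ compression $C_0$; apply (ii) to $C_0(x_1),\dots,C_0(x_m)$ to obtain a norm-$\le1$ compression $C_1$ with $C_1(b_0)\approx b_0$ and each $C_1C_0(x_l)$ close to $B$; then by (i) again reshape $C_1C_0(b)\approx(1-\eta)b_0$ back towards $b$ by a norm-$\le1$ compression $C_2$. The composite $C_2C_1C_0$, after one final renormalisation to restore $\|\sum h^*h\|\le1$, has all the required properties. I expect (i) --- in particular arranging the sharp bound $\|\sum_k w_k^*w_k\|\le1$ rather than a mere finite bound --- together with the bookkeeping of the witness through the iteration in (ii), to be the main obstacle.

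Items (4) and (5) are then formal. For (4): apply (3) to $x_1,\dots,x_m$, $b$, $\epsilon/2$ to get $h_i\in B$ with $\|\sum h_i^*h_i\|\le1$, $\sum h_i^* b h_i\approx_{\epsilon/2}b$, and $\sum h_i^* x_l h_i\approx_{\epsilon/2}y_l\in B$; since $E$ is contractive and fixes $B$, $E(\sum h_i^* x_l h_i)\approx_{\epsilon/2}E(y_l)=y_l$, so $\sum h_i^* x_l h_i\approx_\epsilon E(\sum h_i^* x_l h_i)$. For (5): let $J$ be a nonzero closed ideal of $A$, pick $0\ne a\in J$, and put $x:=a^*a\in J$, $b:=E(x)\in B_+$, which is nonzero since $E$ is faithful. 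Apply (4) to the single element $x$, the positive element $b$, and $\epsilon:=\|b\|/3$; since $E$ is a $B$-bimodule map we have $E(\sum h_i^* x h_i)=\sum h_i^* b h_i$, so $\sum h_i^* x h_i\approx_{2\epsilon}b$ and hence $\|\sum h_i^* x h_i\|\ge\|b\|/3>0$; as $h_i\in B$ and $x\in J$, this element lies in $J\cap B$, so $J\cap B=B$ by simplicity of $B$, giving $B\subset J$, and then $J=A$ because $B$ contains an approximate unit of $A$ by nondegeneracy. Thus $A$ is simple. For \Cs-irreducibility, any intermediate \Cs-subalgebra $B\subset C\subset A$ carries the restricted faithful conditional expectation $E|_C\colon C\to B$, and by (1) the inclusion $B\subset C$ satisfies Condition~($*$), so the same argument with $C$ in place of $A$ shows $C$ is simple.

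Finally (2): take $b_0\otimes e_{11}$ as witness, $e_{11}$ a rank-one projection of $\IK$. Given $X\in A\otimes\IK$ and $\epsilon>0$, approximate $X$ by $p_NXp_N=\sum_{k,l\le N}x_{kl}\otimes e_{kl}$ with $p_N=\sum_{k\le N}e_{kk}$; it then suffices to find $h_i\in B$ with $\|\sum h_i^*h_i\|\le1$, $\|\sum h_i^* b_0 h_i\|\approx_\epsilon1$, and each $\sum h_i^* x_{kl}h_i$ ($k,l\le N$) within $\epsilon$ of $B$ simultaneously, for then $H_i:=h_i\otimes p_N$ does the job. Such $h_i$ are furnished by the simultaneous form (ii) from the proof of (3) (whose argument uses simplicity of $B$, which is present in every application of this lemma; in general one iterates Condition~($*$) directly with the same bookkeeping of the witness).
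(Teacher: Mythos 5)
Your overall strategy coincides with the paper's: item (3) is proved by sandwiching one application of Condition ($*$) between two ``reshaping'' compressions inside the simple algebra $B$ (your comparison lemma (i) is exactly the role played by Lemma A.2 of \cite{S}, which the paper simply cites rather than re-derives), the multi-element case is handled by iterating while using (i) to push the witness back each time (the paper phrases this as induction on $m$, you as a separate ``simultaneous'' lemma (ii) --- same argument), and (4) is formal. Two points deserve correction, though neither is fatal in the context where the lemma is applied.

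First, in your proof of (5) the assertion that $\sum_i h_i^*xh_i$ ``lies in $J\cap B$'' is false: with $h_i\in B$ and $x\in J$ the element lies in $J$, but it is only \emph{within} $2\epsilon$ of $b\in B$, not in $B$. The conclusion survives: since $B/(J\cap B)\hookrightarrow A/J$ isometrically, $\|b+J\|_{A/J}\leq 2\|b\|/3<\|b\|$ forces $J\cap B\neq0$, hence $J\cap B=B$ by simplicity and $J=A$ by nondegeneracy. (The paper instead shows that the closed $B$-bimodule generated by any nonzero positive element of $A$ contains an approximate unit of $B$, which yields simplicity of every intermediate algebra in one stroke; your route via (1) and the restriction $E|_C$ is an acceptable alternative.) Second, for (2) your fallback claim that without simplicity of $B$ ``one iterates Condition ($*$) directly with the same bookkeeping of the witness'' is not justified: Condition ($*$) controls $\|\sum_j h_j^*b_0h_j\|$ only relative to $b_0$ itself, so after the first compression $\Phi_1$ the second application gives no control on $\|\Phi_2(\Phi_1(b_0))\|$; the reshaping step that restores the witness is precisely where simplicity of $B$ enters. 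The paper offers no details for (2) either, and in every application of the lemma $B$ is simple, so your argument does cover all the cases actually used --- but the parenthetical claim of a general iteration should be deleted or properly argued.
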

\begin{proof}
(1) and (2) are obtained by straightforward discussion.

To show (3),
let $b_{0}\in B$ be a positive element in the definition of Condition ($*$).
First,
we assume $m=1$ and fix elements $b\in B_{+}$ and $x_{1}\in A$.
Since the inclusion $B\subset A$ satisfies Condition ($*$),
the statement (3) is trivial for $b=0$ and $m=1$.
We may assume $\|b\|=1$.
Since $B$ is simple,
by Lemma A.2 of \cite{S},
for any $\epsilon>0$,
we obtain finitely many elements $s_{1}, s_{2},\dots,s_{N}$ in $B$
such that $\sum_{i=1}^{N}s_{i}^{*}bs_{i}\approx_{\frac{\epsilon}{3}}b_{0}$ and $\|\sum_{i=1}^{N}s_{i}^{*}s_{i}\|\leq1$ hold.
Let $x:=\sum_{i=1}^{N}s_{i}^{*}x_{1}s_{i}$.
The assumption (\ref{ridgid}) implies that there exist elements $h_{1},h_{2},\dots,h_{n}$ and $y$ in $B$ satisfying
\[\|\sum_{i,j}(s_{i}h_{j})^{*}(s_{i}h_{j})\|\leq1,\; \|\sum_{i,j}(s_{i}h_{j})^{*}b(s_{i}h_{j})\|\approx_{\frac{2\epsilon}{3}}1,\; {\rm and}\;
\sum_{i,j}(s_{i}h_{j})^{*}x_{1}(s_{i}h_{j})\approx_{\frac{\epsilon}{3}}y.
\]
Using Lemma A.2 of \cite{S} again,
we get elements $t_{1},t_{2},\dots,t_{M}$ in $B$ satisfying
\[
\|\sum_{i,j,k}(s_{i}h_{j}t_{k})^{*}(s_{i}h_{j}t_{k})\|\leq1,\hspace{15pt} \sum_{i,j,k}(s_{i}h_{j}t_{k})^{*}b(s_{i}h_{j}t_{k})\approx_{\epsilon}b,\]
and
\[
\sum_{i,j,k}(s_{i}h_{j}t_{k})^{*}x_{1}(s_{i}h_{j}t_{k})\approx_{\epsilon}\sum_{k}t_{k}^{*}yt_{k}.
\]
Set $y_{1}:=\sum_{k}t_{k}^{*}yt_{k}$, then we get the statement for $m=1$.
Suppose (3) holds for $m=m_{0}-1$ and take elements $x_{1},x_{2},\dots,x_{m_0}$ in $A$ and $\epsilon>0$.
We can choose $h_{1},\dots,h_{n}$ and $y_{1}^{\prime},\dots,y_{m_{0}-1}^{\prime}$ in $B$ such that
\[
\|\sum_{i}h_{i}^{*}h_{i}\|\leq1,
\;\sum_{i}h_{i}^{*}bh_{i}\approx_{\epsilon}b,\textrm{\ and\ }
\sum_{i}h_{i}^{*}x_{j}h_{i}\approx_{\epsilon}y_{j}^{\prime}
\] hold for all $1\leq j\leq m_{0}-1$.
We also get $h_{1}^{\prime},\dots,h_{l}^{\prime}$ and $y_{m_{0}}$ in $B$ with
\[
\|\sum_{k}h_{k}^{\prime*}h_{k}^{\prime}\|\leq1,
\;\sum_{k}h_{k}^{\prime*}bh_{k}^{\prime}\approx_{\epsilon}b,\text{ and }
\sum_{k}h_{k}^{\prime*}(\sum_{i}h_{i}^{*}x_{m_{0}}h_{i})h_{k}^{\prime}\approx_{\epsilon}y_{m_0}.
\]
Let $y_{j}:=\sum_{k=1}^{l}h_{k}^{\prime*}y_{j}^{\prime}h_{k}^{\prime}$ for $1\leq j\leq m_{0}-1$.
Then,
we have
\[
\|\sum_{i,k}h_{k}^{\prime*}h_{i}^{*}h_{i}h_{k}^{\prime}\|\leq1,
\;\sum_{i,k}h_{k}^{\prime*}h_{i}^{*}bh_{i}h_{k}^{\prime}\approx_{2\epsilon}b,\text{ and }
\sum_{i,k}h_{k}^{\prime*}h_{i}^{*}x_{j}h_{i}h_{k}^{\prime}\approx_{2\epsilon}y_{j}
\] for $1\leq j\leq m_{0}$.
By induction,
we get (3).

(4) is clear from (3).

To prove (5),
take a nonzero positive element $x_{0}$ in $A$.
To show \Cs-irreducibility of the inclusion $B\subset A$,
it is enough to show that the closed $B$-bimodule generated by $x_{0}$ contains approximate units $(d_{\nu})_{\nu}$ of $B$.
We note that the net $(d_{\nu})_{\nu}$ is approximate units of every intermediate subalgebra between $B$ and $A$ since the inclusion is nondegenerate.
We may assume $\|E(x_{0})\|=1$.
Using Lemma A.2 of \cite{S},
for any $\epsilon>0$ and any $d_{\nu}$,
we can take $s_{1},s_{2},\dots,s_{m}$ in $B$ such that $\|\sum_{j=1}^{m}s_{j}^{*}s_{j}\|\leq1$ and $\sum_{j=1}^{m}s_{j}^{*}E(x_{0})s_{j}\approx_{\epsilon}d_{\nu}$ hold.
Applying (4) with $x:=\sum_{j=1}^{m}s_{j}^{*}x_{0}s_{j}$ and $b:=\sum_{j=1}^{m}s_{j}^{*}E(x_{0})s_{j}$,
we obtain elements $h_{1},\dots,h_{n}$ in $B$ satisfying
\[
\|\sum_{i,j}(s_{j}h_{i})^{*}(s_{j}h_{i})\|\leq1,
\;\sum_{i,j}(s_{j}h_{i})^{*}E(x_{0})(s_{j}h_{i})\approx_{\epsilon}\sum_{j}s_{j}^{*}E(x_{0})s_{j}\approx_{\epsilon}d_{\nu}
\]
and
\[
\sum_{i,j}(s_{j}h_{i})^{*}x_{0}(s_{j}h_{i})\approx_{\epsilon}
E\big(\sum_{i,j}(s_{j}h_{i})^{*}x_{0}(s_{j}h_{i})\big)
=\sum_{i,j}(s_{j}h_{i})^{*}E(x_{0})(s_{j}h_{i}).
\]
The above approximations imply $\sum_{i,j}(s_{j}h_{i})^{*}x_{0}(s_{j}h_{i})\approx_{3\epsilon}d_{\nu}$. 
Hence,
the closed $B$-bimodule generated by $x_{0}$ contains $d_{\nu}$,
and we get (5).
\end{proof}

We give examples of inclusions with Condition ($*$)  arising from compact group actions.

\begin{exa}\label{exa ab}
Let $\alpha\colon G\curvearrowright A$ be a quasi-product action of a compact second countable group $G$ on a separable simple \Cs-algebra.
If the fixed point algebra $A^G$ is simple, 
then the inclusion $A^{G}\subset A$ satisfies Condition ($*$).
Moreover,
the inclusion $A^{G}\subset A$ is \Cs-irreducible.
First,
we assume that $\alpha$ is stable.
To show Condition ($*$),
fix a positive element $b_{0}\in A^G$ with $\|b_{0}\|=1$. 
Since $\alpha$ is a quasi-product action,
we have $M(A)\cap (A^{G})^{\prime}=\IC$ 
(see Remark \ref{rmk prime}).
By Lemma \ref{decomp},
for any $x\in A$ and $\epsilon>0$,
there is a finite set $F\subset\widehat{G}$ such that $x\approx_{\epsilon}\sum_{\sigma\in F, i}v_{\sigma, i}^{*}a_{\sigma, i}$,
where $a_{\sigma, i}\in A^G$ for all $\sigma\in F$ and all $i$.
As in Remark \ref{rmk quasi product},
endomorphisms $\{\rho_{\sigma}\}_{\sigma\in F\setminus\{1_{\widehat{G}}\}}$ are properly outer.
By Lemma \ref{lem prop outer},
there is a positive element $c\in A^{G}$ with $\|c\|=1$ such that $\|cb_{0}c\|\approx_{\epsilon}1$ and $\rho_{\sigma}(c)a_{\sigma, i}c\approx_{\frac{\epsilon}{M}}0$ hold for every $i$ and $\sigma\neq 1_{\widehat{G}}$,
where $M:=\sum_{\sigma\in F}{\rm dim}(\sigma)$.
Set $y:=ca_{1_{\widehat{G}}}c\in A^{G}$.
Then,
we get $cxc\approx_{2\epsilon}y$ and $\|cb_{0}c\|\approx_{\epsilon}1$.
For general $\alpha$,
the inclusion $A^{G}\otimes\IK\subset A\otimes\IK$ satisfies the conditon ($*$).
It is routine to show that the consequence of (3) of Lemma 4.1 is preserved by taking a corner subalgebra.
Hence,
the inclusion $A^{G}\subset A$ also satisfies Condition ($*$).

If a compact group $G$ is abelian or profinite,
and the stabilized action $\alpha\otimes{\rm id}_{\IK}\colon G\acts A\otimes\IK$ of $\alpha$ satisfies the assumptions in Section 3,
then every endomorphism $\rho_{\sigma}$ is an automorphism or has a finite depth.
Lemma 1.1 of \cite{Ki} and Theorem 7.5 of \cite{I1} imply that each $\rho_{\sigma}$ is properly outer when $\sigma\neq1_{\widehat{G}}$.  
By the same discussion as in the first half,
we can show that the inclusion $A^{G}\subset A$ satisfies Condition ($*$).
In this case, separability of $A$ is not required.
\end{exa}

Thanks to a very recent result \cite[Theorem 1.1]{I2} by Izumi,
when $A$ is separable and a stabilized action $\alpha\otimes{\rm id}_{\IK}\colon G\acts A\otimes\IK$ satisfies the assumptions in Section 3,
$\alpha$ is quasi-product and the inclusion $A^G\subset A$ satisfies Condition ($*$).
However,
since this theorem is based on a very deep result of \cite{BEK},
we then give examples of inclusions arising from compact group actions such that we can confirm Condition ($*$) in a more primitive way.
To explain the next example,
we use the following lemmas.

\begin{lem}\label{simple}
Let $B$ be a $\sigma$-unital simple \Cs-algebra.
For any finitely many elements $x_{1}, x_{2},\dots, x_{n}\in M(B)_{+}$ and any $\epsilon>0$,
there are countably many elements $(s_{i})_{i=1}^{\infty}$ in $B$ with $\sum_{i}s_{i}s_{i}^{*}=1_{M(B)}$ and nonnegative numbers $\mu_{1},\dots, \mu_{n}\geq0$ such that
\[
\mu_{1}=\|x_1\|,\;
\mu_{k}\leq\|x_{k}\|,
\text{ and }\sum_{i}s_{i}x_{k}s_{i}^{*}\approx_{\epsilon}\mu_{k}1_{M(B)}
\]
hold for $k=1,\dots,n$,
where the sums $\sum_{i}s_{i}s_{i}^{*}$ and $\sum_{i}s_{i}x_{k}s_{i}^{*}$ converge in the strict topology.
\end{lem}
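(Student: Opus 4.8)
The plan is to build the isometries $(s_i)_i$ and the scalars $\mu_k$ inductively on $k$, exploiting simplicity of $B$ at each stage in the form of the averaging estimate of Lemma A.2 of \cite{S} (which produces, for any positive $a$ in a $\sigma$-unital simple \Cs-algebra and any $\delta>0$, finitely many $t_j\in B$ with $\|\sum_j t_jt_j^*\|\le 1$ and $\sum_j t_j a t_j^*\approx_\delta \|a\|\,p$ for a suitable projection $p$, and then a standard stabilization argument to convert this into $\sum_j t_j a t_j^*\approx_\delta \|a\|\,1_{M(B)}$ with the partial sums converging strictly). Concretely, first I would handle $x_1$: apply the simple-algebra averaging to $x_1\in M(B)_+$ to obtain a strictly convergent family $(s_i^{(1)})_i$ in $B$ with $\sum_i s_i^{(1)}(s_i^{(1)})^*=1_{M(B)}$ and $\sum_i s_i^{(1)} x_1 (s_i^{(1)})^*\approx_{\epsilon'} \|x_1\|\,1_{M(B)}$, where $\epsilon'<\epsilon$ is chosen small enough to leave room for the later perturbations; set $\mu_1:=\|x_1\|$.

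Next I would proceed to $x_2$: let $y_2:=\sum_i s_i^{(1)} x_2 (s_i^{(1)})^*\in M(B)_+$, note $\|y_2\|\le\|x_2\|$, and apply the averaging lemma again to $y_2$ to get a family $(t_i)_i$ in $B$ with $\sum_i t_i t_i^*=1_{M(B)}$ and $\sum_i t_i y_2 t_i^*\approx \mu_2\,1_{M(B)}$ for $\mu_2:=\|y_2\|\le\|x_2\|$. Composing, the family $s_i := \sum_{j}$ (appropriate reindexed products of the $t$'s with the $s^{(1)}$'s) still satisfies $\sum_i s_i s_i^*=1_{M(B)}$ with strict convergence, and $\sum_i s_i x_1 s_i^* = \sum_i t_i\big(\sum_j s_j^{(1)} x_1 (s_j^{(1)})^*\big) t_i^* \approx \mu_1\,1_{M(B)}$ because the inner sum is already $\approx \mu_1\,1_{M(B)}$ and $\sum_i t_i t_i^*=1$, while $\sum_i s_i x_2 s_i^*\approx \mu_2\,1_{M(B)}$ by construction. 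Iterating this $n-1$ more times, with the error budget at each stage halved (or divided by $2^n$ at the outset), yields the final $(s_i)_i$ and $\mu_1,\dots,\mu_n$ with all the required estimates; the key point preserved at every step is that an approximation $\sum s_i x s_i^*\approx_\delta \mu\,1_{M(B)}$ is stable under further averaging by any family with $\sum t_i t_i^*=1_{M(B)}$, which follows from $\|\sum_i t_i z t_i^*\|\le\|z\|$ applied to $z = \sum s_j x s_j^* - \mu\,1$.

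The main technical obstacle is the bookkeeping with strict convergence of countable sums of the form $\sum_i s_i s_i^*$ in $M(B)$ and the verification that the composed (reindexed double) sums again converge strictly to $1_{M(B)}$ and interact correctly with multiplier-algebra elements $x_k$. One must check that for a strictly convergent $\sum_i t_i t_i^*=1$ and a fixed $z\in M(B)$, the sum $\sum_i t_i z t_i^*$ converges strictly, and that the associated "doubly indexed" family obtained by concatenating two such averaging systems is again a legitimate strictly summable family of elements of $B$; these are routine but need care because $B$ is only $\sigma$-unital, not unital. Everything else is a direct iteration of the simple-\Cs-algebra averaging lemma with a geometric error budget.
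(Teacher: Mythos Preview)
Your inductive skeleton---handle $x_1$ first, set $\mu_1=\|x_1\|$, then replace each $x_k$ by its averaged version and iterate, using $\|\sum_i t_i z t_i^*\|\le\|z\|$ to propagate earlier approximations---is exactly the paper's strategy. The paper also cites Lemma~6.3 of \cite{OP} for the strict-convergence bookkeeping you identify as the main technicality.

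Where your proposal diverges is the single-step input, and this is a genuine gap. Lemma~A.2 of \cite{S} does \emph{not} produce a family with $\sum_j t_j t_j^*=1_{M(B)}$; it gives finitely many $t_j$ with $\|\sum_j t_j t_j^*\|\le 1$ and $\sum_j t_j a t_j^*$ close to a prescribed positive element of $B$ (not to $\|a\|p$ for a projection $p$). Your parenthetical ``standard stabilization argument'' that upgrades this to a strictly convergent $\sum s_i s_i^*=1_{M(B)}$ with $\sum_i s_i x_1 s_i^*\approx \|x_1\|1$ is precisely the nontrivial step, and you have not said how to do it. Note also that $x_1\in M(B)_+$, not $B_+$, so Lemma~A.2 of \cite{S} does not apply to $x_1$ as stated.

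The paper's base step is different and cleaner: it takes a continuous bump function $f_\delta$ supported near $1$, forms the hereditary subalgebra $H=\overline{f_\delta(\|x_1\|^{-1}x_1)\,B\,f_\delta(\|x_1\|^{-1}x_1)}\subset B$ (full and $\sigma$-unital since $B$ is simple), and then invokes Lemma~6.2 of \cite{OP} to obtain $(s_i^{(1)})\subset B$ with $\sum_i s_i^{(1)}(s_i^{(1)})^*=1_{M(B)}$ strictly and $(s_i^{(1)})^*s_i^{(1)}\in H$. Because $x_1$ acts on $H$ as approximately $\|x_1\|$, this immediately gives $\sum_i s_i^{(1)} x_1 (s_i^{(1)})^*\approx \|x_1\|1$. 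Replacing your appeal to \cite{S} by this hereditary-subalgebra argument (Lemma~6.2 of \cite{OP}) makes your outline into the paper's proof.
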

\begin{proof}
Before the proof,
we remark that for any sequence $(t_{i})_{i=1}^{\infty}$ in $B$ and any element $y\in M(B)$,
if the sum $\sum_{i}t_{i}t_{i}^{*}$ converges in the strict topology,
then $\sum_{i}t_{i}yt_{i}^{*}$ also converges in the strict topology
(see Lemma 6.3 of \cite{OP}).

Fix $x_{1},\dots,x_{n}\in M(B)$.
For $\delta>0$,
take a positive function $f_{\delta}\in C([0, 1])$ that is supported by $[1-\delta, 1]$ and $f_{\delta}(t)=1$ for all $t\in[1-\frac{\delta}{2}, 1]$.
Then,
for every $y\in\overline{f_{\delta}(\frac{1}{\|x_{1}\|}x_1)Bf_{\delta}(\frac{1}{\|x_{1}\|}x_1)}$,
we have $x_{1}y\approx_{\|x_{1}\|\|y\|\delta}\|x_{1}\|y$.
Since the hereditary subalgebra $H:=\overline{f_{\delta}(\frac{1}{\|x_{1}\|}x_1)Bf_{\delta}(\frac{1}{\|x_{1}\|}x_1)}$ is full and $\sigma$-unital,
there is a sequence $(s_{i}^{(1)})_{i=1}^{\infty}\subset B$ such that the sum $\sum_{i}s_{i}^{(1)}s_{i}^{(1)*}$ converges to $1$ in the strict topology and $s_{i}^{(1)*}s_{i}^{(1)}\in H$ by Lemma 6.2 of \cite{OP}.
Since 
\[
x_{1}f_{2\delta}(\frac{1}{\|x_{1}\|}x_1)\approx_{2\|x_{1}\|\delta}\|x_{1}\|f_{2\delta}(\frac{1}{\|x_{1}\|}x_1)\]
and 
\[
s_{i}^{(1)}f_{2\delta}(\frac{1}{\|x_{1}\|}x_1)=s_{i}^{(1)}
\]
hold,
we get 
\[
\|x_1\|1=\|x_1\|\sum_{i}s_{i}^{(1)}s_{i}^{(1)*}\approx_{2\|x_{1}\|\delta}\sum_{i}s_{i}^{(1)}x_{1}s_{i}^{(1)*}.
\]
Similarly,
if we put $x_{k}^{(2)}:=\sum_{i}s_{i}^{(1)}x_{k}s_{i}^{(1)*}$ and $\mu_2:=\|x_{2}^{(2)}\|$, 
then we get a sequence $(s_{i}^{(2)})_{i=1}^{\infty}\subset B$ such that 
\[
\sum_{i}s_{i}^{(2)}s_{i}^{(2)*}=1\mathrm{\ and\ }\sum_{i}s_{i}^{(2)}x_{2}^{(2)}s_{i}^{(2)*}\approx_{2\mu_{2}\delta}\mu_{2}1
\] hold. 
Repeating this strategy for $k=1,\dots,n$,
we get sequences $\{(s_{i}^{(k)})_{i=1}^{\infty}\}_{k=1}^{n}$ in $B$ and nonnegative numbers $\mu_{k}\leq\|x_k\|$ for $k=1,\dots,n$ which satisfy
\[\sum_{i_{1},\dots,i_{n}}s_{i_{n}}^{(n)}\dots s_{i_2}^{(2)}s_{i_1}^{(1)}s_{i_1}^{(1)*}s_{i_2}^{(2)*}\dots s_{i_{n}}^{(n)*}=1\]
and
\[\sum_{i_{1},\dots,i_{n}}s_{i_{n}}^{(n)}\dots s_{i_2}^{(2)}s_{i_1}^{(1)}x_{k}s_{i_1}^{(1)*}s_{i_2}^{(2)*}\dots s_{i_{n}}^{(n)*}\approx_{2\mu_{k}\delta}\mu_{k}1.\]
If we choose $\delta$ as $2\|x_{k}\|\delta\leq\epsilon$ for all $k$,
then we get the statement.
\end{proof}

\begin{lem}\label{simple1}
Let $B$ be a $\sigma$-unital simple \Cs-algebra.
For any finitely many elements $x_{1}\in M(B)_{+}$ and $x_{2},\dots, x_{n}\in M(B)$ and any $\epsilon>0$,
there are countably many elements $(s_{i})_{i=1}^{\infty}$ in $B$ with $\sum_{i}s_{i}s_{i}^{*}=1_{M(B)}$ and complex numbers $\mu_{1},\dots, \mu_{n}$ such that 
\[
\mu_{1}=\|x_{1}\|,\;
|\mu_{k}|\leq4\|x_{k}\|,\;
and \sum_{i}s_{i}x_{k}s_{i}^{*}\approx_{\epsilon}\mu_{k}1_{M(B)}
\]
hold for $k=1,\dots,n$,
where the sums $\sum_{i}s_{i}s_{i}^{*}$ and $\sum_{i}s_{i}x_{k}s_{i}^{*}$ converge in the strict topology.
\end{lem}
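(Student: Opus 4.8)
The plan is to reduce the statement to Lemma \ref{simple} by writing each of the (not necessarily positive) elements $x_2,\dots,x_n$ as a linear combination of positive elements. First I would decompose, for each $k=2,\dots,n$, the element $x_k$ into its real and imaginary parts $\mathrm{Re}(x_k)=\tfrac12(x_k+x_k^*)$ and $\mathrm{Im}(x_k)=\tfrac1{2i}(x_k-x_k^*)$, and then split each of these self-adjoint elements into its positive and negative parts. Since $\norm{\mathrm{Re}(x_k)}\le\norm{x_k}$ and $\norm{\mathrm{Im}(x_k)}\le\norm{x_k}$, this yields positive elements $x_{k,1},x_{k,2},x_{k,3},x_{k,4}\in M(B)_+$ with $\norm{x_{k,j}}\le\norm{x_k}$ for every $j$ and $x_k=x_{k,1}-x_{k,2}+i(x_{k,3}-x_{k,4})$.

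Next I would apply Lemma \ref{simple} to the finite family of positive elements $x_1,\{x_{k,j}\}_{k=2,\dots,n;\,j=1,\dots,4}$, taking $\tfrac\epsilon4$ as the error parameter and keeping $x_1$ in the first slot so that the associated constant comes out exactly $\norm{x_1}$. This produces a sequence $(s_i)_{i=1}^\infty$ in $B$ with $\sum_i s_is_i^*=1_{M(B)}$ strictly convergent, the number $\mu_1=\norm{x_1}$, and nonnegative numbers $\nu_{k,j}\le\norm{x_{k,j}}\le\norm{x_k}$ such that $\sum_i s_ix_1s_i^*\approx_\epsilon\mu_11_{M(B)}$ and $\sum_i s_ix_{k,j}s_i^*\approx_{\epsilon/4}\nu_{k,j}1_{M(B)}$, all sums converging in the strict topology. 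Then I would set $\mu_k:=\nu_{k,1}-\nu_{k,2}+i(\nu_{k,3}-\nu_{k,4})$ for $k=2,\dots,n$; linearity of the map $x\mapsto\sum_i s_ixs_i^*$ on $M(B)$, together with the fact that finite linear combinations of strictly convergent series are strictly convergent (as recalled at the start of the proof of Lemma \ref{simple}), gives that $\sum_i s_ix_ks_i^*$ converges strictly and, by the triangle inequality, lies within $4\cdot\tfrac\epsilon4=\epsilon$ of $\mu_k1_{M(B)}$, while $|\mu_k|\le\nu_{k,1}+\nu_{k,2}+\nu_{k,3}+\nu_{k,4}\le4\norm{x_k}$.

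I do not expect any genuine obstacle here: the argument is a routine reduction to Lemma \ref{simple}. The only points that need a little care are keeping $x_1$ positive and in the first slot so that $\mu_1=\norm{x_1}$ holds on the nose, and the bookkeeping of the $\tfrac\epsilon4$ errors together with the constant $4$ arising from the four-term decomposition of a general element of $M(B)$ into positive elements.
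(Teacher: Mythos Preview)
Your proposal is correct and follows essentially the same approach as the paper: the paper's proof is the single sentence ``Since each $x_k$ is decomposed into a linear span of four positive elements, the statement follows from the previous lemma.'' Your write-up simply fills in the details of this reduction (keeping $x_1$ in the first slot, the $\tfrac{\epsilon}{4}$ bookkeeping, and the bound $|\mu_k|\le 4\|x_k\|$ from the four-term decomposition).
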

\begin{proof}
Since each $x_k$ is decomposed into a linear span of four positive elements,
the statement follows from the previous lemma.
\end{proof}

\begin{exa}\label{exa isa1}
Let $A$ be a separable simple \Cs-algebra and $\alpha\colon G\acts A$ be an isometrically shift-absorbing action of a compact second countable group $G$.
Then $A^G$ is simple and the inclusion $A^{G}\subset A$ satisfies Condition ($*$).
Moreover,
the inclusion $A^{G}\subset A$ is \Cs-irreducible by Lemma \ref{easy} (5).
Let $\theta\colon L^{2}(G, A)\rightarrow A_{\infty, \alpha}$ be an equivariant $A$-bimodule map in Theorem \ref{char} (Proposition 3.8 of \cite{GS}).
For any $x\in A$,
let $\tilde{x}$ be an element of $L^{2}(G, A)^{G}$ such that $\tilde{x}(g):=\alpha_{g}(x)$.
Define the $A^{G}$-bimodule map $\psi\colon A\rightarrow A_{\infty, \alpha}^{G}$ as $\psi(x):=\theta(\tilde{x})$.
By definition,
we have
\[
\psi(x)^{*}\psi(y)=\langle\tilde{x}, \tilde{y}\rangle=\int_{G}\alpha_{g}(x^{*}y)dg=E(x^{*}y)
\]
for all $x, y\in A$.
Fix a positive element $b_{0}$ in $A^G$ with $\|b_{0}\|=1$,
and we show (\ref{ridgid}).
Take an element $x$ in $A$ and $\epsilon>0$. 
There is a measurable partition $G=\sqcup_{k=1}^{n}E_{k}$ and elements $g_{k}\in E_{k}$ such that $\|\alpha_{g_{k}^{-1}}(x)-\alpha_{g^{-1}}(x)\|<\epsilon$ for $k=1,\dots, n$ and all $g\in E_{k}$.
By Lemma \ref{simple},
there are countably many elements $(s_i)_{i=1}^{\infty}$ in $A$ and complex numbers $\mu_{1},\dots, \mu_{n}$ which satisfy $\sum_{i}s_{i}s_{i}^{*}=1$, $\sum_{i}s_{i}b_{0}s_{i}^{*}\approx_{\epsilon}1$,
and $\sum_{i}s_{i}\alpha_{g_{k}^{-1}}(x)s_{i}^{*}\approx_{\epsilon}\mu_{k}$.
Since the sum $\sum_{i}b_{0}^{\frac{1}{2}}\alpha_{g}(s_{i})x\alpha_{g}(s_{i})^{*}b_{0}^{\frac{1}{2}}$ converges uniformly on $G$,
we get the following approximations;
\begin{align*}
\sum_{i}b_{0}^{\frac{1}{2}}\psi(s_{i})x\psi(s_{i})^{*}b_{0}^{\frac{1}{2}}
=&\sum_{i}b_{0}^{\frac{1}{2}}\theta(\tilde{s}_{i})x\theta(\tilde{s}_{i})^{*}b_{0}^{\frac{1}{2}}\\
=&\sum_{i}\langle \tilde{s}^{*}_{i}b_{0}^{\frac{1}{2}},x\tilde{s}^{*}_{i}b_{0}^{\frac{1}{2}}\rangle_{L^{2}(G, A)}\\
=&\int_{G}\sum_{i}b_{0}^{\frac{1}{2}}\alpha_{g}(s_{i})x\alpha_{g}(s_{i})^{*}b_{0}^{\frac{1}{2}}dg\\
=&\int_{G}\alpha_{g}(\sum_{i}b_{0}^{\frac{1}{2}}s_{i}\alpha_{g^{-1}}(x)s_{i}^{*}b_{0}^{\frac{1}{2}})dg
\approx_{2\epsilon}\sum_{k=1}^{n}\mu_{k}|E_{k}|b_{0},
\end{align*}
where each $|E_{k}|$ is a measure of $E_{k}$.
Similarly,
we get $\sum_{i}b_{0}^{\frac{1}{2}}\psi(s_{i})b_{0}\psi(s_{i})^{*}b_{0}^{\frac{1}{2}}\approx_{\epsilon}b_{0}$ and $\sum_{i}b_{0}^{\frac{1}{2}}\psi(s_{i})\psi(s_{i})^{*}b_{0}^{\frac{1}{2}}=b_{0}$.
Put $y:=\sum_{k=1}^{n}\mu_{k}|E_{k}|b_{0}\in A^{G}$.
Since $\psi(s_{i})^{*}b_{0}^{\frac{1}{2}}$ is in $(A_{\infty, \alpha})^G=(A^{G})_{\infty}$,
there exist lifts $(h_{i, j})_{j=1}^{\infty}\in l^{\infty}(\IN, A^{G})$ of $\psi(s_{i})^{*}b_{0}^{\frac{1}{2}}$ such that
\begin{align*}
&\sum_{i=1}^{M}h_{i, l}^{*}h_{i, l}\leq b_{0},\\
\lim_{N\to\infty}\limsup_{j\to\infty}&\|\sum_{i=1}^{N}h_{i, j}^{*}xh_{i, j}-y\|<2\epsilon,\\
\lim_{N\to\infty}\limsup_{j\to\infty}&\|\sum_{i=1}^{N}h_{i, j}^{*}b_{0}h_{i, j}-b_{0}\|<\epsilon
\end{align*}
hold for every $M$ and $l$.
Thus,
we can choose  $j$ and $N$ such that $\|\sum_{i=1}^{N}h_{i, j}^{*}h_{i, j}\|\leq1$,
$\|\sum_{i=1}^{N}h_{i, j}^{*}xh_{i, j}-y\|<3\epsilon$,
and $\|\sum_{i=1}^{N}h_{i, j}^{*}b_{0}h_{i, j}-b_{0}\|<2\epsilon$ hold.
This implies that the inclusion $A^{G}\subset A$ satisfies Condition ($*$).

To show that $A^G$ is simple,
let $c, d$ be positive elements in $A^G$ with $\|c\|=\|d\|=1$.
Since $A$ is simple,
for any $\epsilon>0$,
we can take finitely many elements $t_{1},\dots,t_{m}$ in $A$ with $\sum_{i=1}^{m}t_{i}ct_{i}^{*}\approx_{\epsilon}d$.
Then,
we have 
\[
\sum_{i=1}^{m}\psi(t_{i})c\psi(t_{i})^{*}=E(\sum_{i=1}^{m}t_{i}ct_{i}^{*})\approx_{\epsilon}E(d)=d.
\]
Since $\psi(t_{i})\in(A^{G})_{\infty}$,
by the same arguments as in the first half,
we get $d\in\overline{\rm span}\;A^{G}cA^{G}$. 
This implies that $A^G$ is simple.
\end{exa}

\begin{rmk}\label{rmk_pis}
Under the above assumptions,
we assume $A$ is purely infinite simple.
For any nonzero positive elements $a, b\in A^G$,
there is an element $t\in A$ such that $tat^{*}=b$.
Hence,
we have $\psi(t)a\psi(t)^{*}=E(tat^{*})=b$.
This implies that there is a sequence $\{h_{n}\}_n$ of $A^G$ with $\lim_{n\to\infty}h_{n}ah_{n}^{*}=b$.
Therefore,
$A^G$ is also purely infinite simple
(see Proposition 4.1.1 of \cite{RS}).
\end{rmk}

Finally,
we give an example of an inclusion of free product \Cs-algebras.
\begin{exa}\label{exa free}
Let $A\subset \cA$ and $B\subset \cB$ be inclusions of unital \Cs-algebras with faithful conditional expectations $E_A$ and $E_B$,
respectively.
Suppose $\phi_A$ (resp. $\phi_B$) is a state of $A$ (resp. $B$) whose GNS representation is faithful.
Then,
there is a natural inclusion
\[
(A, \phi_{A})*(B, \phi_{B})\subset(\cA, \phi_{A}\circ E_A)*(\cB, \phi_{B}\circ E_B)
\]
of reduced free product \Cs-algebras.
This inclusion satisfies Condition ($*$) if there is a Haar unitary $u$ in the centralizer of $\phi_A$,
(i.e.,
$\phi_A(u^n)=0$ for every $n\in\IZ\setminus\{0\}$),
and if $B\neq\IC$.

Let $\fA$ be the free product \Cs-algebra $(\cA, \phi_{A}\circ E_A)*(\cB, \phi_{B}\circ E_B)$
and $\phi$ be the canonical state of $\fA$ associated with $\phi_{A}\circ E_A$ and $\phi_{B}\circ E_B$,
(see \cite[Definition 4.7.1]{BO} for the definition).
By Proposition 3.2 of \cite{D},
for any $x\in\fA$ and $\epsilon>0$,
there are finitely many unitaries $z_1,\dots, z_n\in\mathfrak{A}$ such that $\|\frac{1}{n}\sum_{i=1}^{n}z_{i}xz_{i}^{*}-\phi(x)\|<\epsilon$.
Moreover,
since there is a Haar unitary $u$ in $A$ and $B\neq\IC$,
as in the proofs of Lemma 3.1 and Proposition 3.2 of \cite{D}, 
we can take unitaries $z_1,\dots, z_n$ in $(A, \phi_{A})*(B, \phi_{B})(\subset\fA)$.
This implies that the inclusion $(A, \phi_{A})*(B, \phi_{B})\subset\fA$ satisfies Condition ($*$).
\end{exa}

Let $(A, G, \alpha)$ be a \Cs-dynamical system satisfying the assumptions in Section \ref{sec decomp}.
There is a family $\{\rho_\sigma\}_{\sigma\in\widehat{G}}$ of endomorphisms of $A^G$ defined in Section \ref{sec decomp}.
As in Example \ref{exa ab},
if every non-trivial $\rho_{\sigma}$ is properly outer,
then the inclusion $A^{G}\subset A$ satisfies Condition ($*$).
Conversely,
if we assume Condition ($*$),
the following holds for endomorphisms $\{\rho_\sigma\}_{\sigma\in\widehat{G}}$.

\begin{lem}\label{outer1}
Under the above assumptions,
if the inclusion $A^{G}\subset A$ satisfies Condition ($*$),
then for any elements $b, a_1, a_2,\dots, a_{l}\in A^{G}$ with $b\geq0$,
any irreducible representations $\sigma, \sigma_{1}, \sigma_{2},\dots, \sigma_{m}\in\widehat{G}$ with $\sigma\perp\sigma_j$,
and any $\epsilon>0$,
there are finitely many elements $h_1,\dots, h_{n}\in A^{G}$ satisfying 
\[
\|\sum_{i}^{n}h_{i}^{*}h_{i}\|\leq1,
\;\sum_{i}^{n}h_{i}^{*}bh_{i}\approx_{\epsilon}b
\text{ and } \sum_{i}^{n}\rho_{\sigma_j}(h_{i}^{*})a_{k}\rho_{\sigma}(h_{i})\approx_{\epsilon}0
\]
for every $j, k$.
\end{lem}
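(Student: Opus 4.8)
The plan is to deduce the statement from Lemma \ref{easy}(4) by making the endomorphism averages explicit through the Cuntz isometries $v_{\sigma,i}$. Recall from Lemma \ref{endom}(3) that $\{v_{\sigma,i}\}_i$ is an orthonormal basis of $\cH_\sigma$ with $\sum_i v_{\sigma,i}v_{\sigma,i}^*=1$, and that $v_{\sigma,i}x=\rho_\sigma(x)v_{\sigma,i}$ for $x\in A^G$; hence $\rho_\sigma(y)=\sum_i v_{\sigma,i}\,y\,v_{\sigma,i}^*$ for all $y\in A^G$, and likewise for each $\rho_{\sigma_j}$. Substituting these expansions into $\sum_i\rho_{\sigma_j}(h_i^*)\,a_k\,\rho_\sigma(h_i)$ and collapsing the two outer Cuntz sums via $\sum_p v_{\sigma_j,p}v_{\sigma_j,p}^*=1=\sum_q v_{\sigma,q}v_{\sigma,q}^*$ yields the identity
\[
\sum_i\rho_{\sigma_j}(h_i^*)\,a_k\,\rho_\sigma(h_i)=\sum_{p,q} v_{\sigma_j,p}\Big(\sum_i h_i^*\,c^{(j,k)}_{p,q}\,h_i\Big) v_{\sigma,q}^*,\qquad c^{(j,k)}_{p,q}:=v_{\sigma_j,p}^*\,a_k\,v_{\sigma,q}\in A,
\]
where $p,q$ range over the finitely many basis indices of $\cH_{\sigma_j}$ and $\cH_\sigma$. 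Since the $v$'s are isometries, it therefore suffices to produce $h_1,\dots,h_n\in A^G$ with $\|\sum_i h_i^* h_i\|\le 1$, $\sum_i h_i^* b h_i\approx b$, and $\sum_i h_i^*\,c^{(j,k)}_{p,q}\,h_i$ uniformly small over the finitely many indices $j,k,p,q$.

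The step that makes this possible is the observation that every $c^{(j,k)}_{p,q}$ lies in $\ker E$. Indeed, $a_k$ is $G$-invariant, and, as computed in the proof of Lemma \ref{endom}, $\alpha_g(v_{\tau,i})=\sum_r\langle\tau(g)\xi^\tau_i,\xi^\tau_r\rangle v_{\tau,r}$ for suitable orthonormal bases $\{\xi^\tau_i\}$ of $V_\tau$; expanding $E(c^{(j,k)}_{p,q})=\int_G\alpha_g(v_{\sigma_j,p})^*\,a_k\,\alpha_g(v_{\sigma,q})\,dg$ then gives a finite sum of terms $v_{\sigma_j,r}^*\,a_k\,v_{\sigma,s}$ with coefficients $\int_G\overline{\langle\sigma_j(g)\xi^{\sigma_j}_p,\xi^{\sigma_j}_r\rangle}\,\langle\sigma(g)\xi^\sigma_q,\xi^\sigma_s\rangle\,dg$, all of which vanish by Schur's orthogonality relations because $\sigma\perp\sigma_j$. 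Hence $E(c^{(j,k)}_{p,q})=0$ for every $j,k,p,q$.

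Now I would apply Lemma \ref{easy}(4)—available since $A^G$ is simple, the canonical conditional expectation $E\colon A\to A^G$ exists, and $A^G\subset A$ satisfies Condition ($*$)—to the positive element $b$, the finite family $\{c^{(j,k)}_{p,q}\}_{j,k,p,q}\subset A$, and a parameter $\epsilon'$ chosen with $\epsilon'\cdot\big(1+{\rm dim}(\sigma)\cdot\max_j{\rm dim}(\sigma_j)\big)\le\epsilon$. This produces $h_1,\dots,h_n\in A^G$ with $\|\sum_i h_i^* h_i\|\le 1$, $\sum_i h_i^* b h_i\approx_{\epsilon'} b$, and $\sum_i h_i^*\,c^{(j,k)}_{p,q}\,h_i\approx_{\epsilon'}E\big(\sum_i h_i^*\,c^{(j,k)}_{p,q}\,h_i\big)$ for all indices. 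Since the $h_i$ are fixed points and $E$ is $A^G$-bimodular, $E\big(\sum_i h_i^*\,c^{(j,k)}_{p,q}\,h_i\big)=\sum_i h_i^*E(c^{(j,k)}_{p,q})h_i=0$, so each such sum has norm $<\epsilon'$. Plugging this into the displayed identity and using $\|v_{\sigma_j,p}\|=\|v_{\sigma,q}\|=1$ gives $\|\sum_i\rho_{\sigma_j}(h_i^*)\,a_k\,\rho_\sigma(h_i)\|<{\rm dim}(\sigma)\cdot{\rm dim}(\sigma_j)\cdot\epsilon'\le\epsilon$ for all $j,k$, while the first two required estimates hold since $\epsilon'\le\epsilon$.

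I do not expect a genuine obstacle: the conceptual content is already packaged in Lemma \ref{easy}(4), and the two ingredients needed—the expansion $\rho_\tau(y)=\sum_q v_{\tau,q}yv_{\tau,q}^*$ and the vanishing $E(v_{\sigma_j,p}^*a_kv_{\sigma,q})=0$—are both routine given the constructions of Section \ref{sec decomp} and Schur orthogonality. The only point requiring a little care is arranging a single error parameter $\epsilon'$ that handles all pairs $(\sigma_j,a_k)$ and all basis indices at once; this is why Lemma \ref{easy}(4) is invoked for the entire finite family $\{c^{(j,k)}_{p,q}\}$ simultaneously rather than element by element.
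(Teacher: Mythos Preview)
Your argument is correct and takes a genuinely different route from the paper's proof. You expand both $\rho_{\sigma_j}$ and $\rho_\sigma$ simultaneously via the Cuntz isometries and reduce everything to the vanishing $E(v_{\sigma_j,p}^*\,a_k\,v_{\sigma,q})=0$, which follows from Schur orthogonality; a single application of Lemma~\ref{easy}(4) to the finite family $\{c^{(j,k)}_{p,q}\}$ then finishes the job. The paper instead proceeds in two stages: it first proves the case $\sigma=1_{\widehat G}$ (where only one isometry $v_j\in\cH_{\sigma_j}$ per $j$ is used, together with the identity $y=\mathrm{dim}(\sigma_j)\,E(v_jv_j^*y)$ for $y\in A^G$), and then reduces the general case to this one via the conjugate endomorphism $\overline{\rho}_\sigma$, the isometry $R_\sigma\in(\mathrm{id},\overline{\rho}_\sigma\rho_\sigma)$, and the minimal expectation $E_\sigma(x)=\rho_\sigma(R_\sigma^*\overline{\rho}_\sigma(x)R_\sigma)$, exploiting that $\overline{\sigma}\otimes\sigma_j\perp 1_{\widehat G}$.

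Your approach is more elementary in that it avoids any appeal to the finite-index/conjugate machinery of Section~\ref{sec fin} and Lemma~\ref{endom}(5),(6); the only inputs beyond Lemma~\ref{easy}(4) are the Cuntz relation $\sum_q v_{\tau,q}v_{\tau,q}^*=1$ and the transformation law for $\alpha_g(v_{\tau,i})$, both from Lemma~\ref{endom}. The paper's route, on the other hand, is closer in spirit to the tensor-categorical picture (Frobenius reciprocity), and the reduction step it isolates is exactly what one would use if one wanted to replace $\widehat G$ by a more general tensor category where a direct matrix-coefficient computation is unavailable.
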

\begin{proof}
First,
we assume that $\sigma=1_{\widehat{G}}$,
(i.e.,
$\rho_{\sigma}=\id_{A^{G}}$).
Take isometries $v_{j}\in\cH_{\sigma_j}\subset M(A)$ for all $j$.
By (4) of Lemma \ref{easy},
there are finitely many elements $h_1,\dots, h_{m}\in A^{G}$ such that 
\[
\|\sum_{i}h_{i}^{*}h_{i}\|\leq1,
\;\sum_{i}h_{i}^{*}bh_{i}\approx_{\epsilon}b,
\text{ and }\sum_{i}h_{i}^{*}v_{j}^{*}a_{k}h_{i}\approx_{\frac{\epsilon}{\text{dim}(\sigma_j)}}E(\sum_{i}h_{i}^{*}v_{j}^{*}a_{k}h_{i})=0
\]
hold for all $j$ and $k$.
Since we have 
\[
\sum_{i}\rho_{\sigma_j}(h_{i}^{*})a_{k}h_{i}=\text{dim}(\sigma_j)E(v_{j}v_{j}^{*}\sum_{i}\rho_{\sigma_j}(h_{i}^{*})a_{k}h_{i})=\text{dim}(\sigma_j)E(v_{j}\sum_{i}h_{i}^{*}v_{j}^{*}a_{k}h_{i}),
\]
the statement holds.
Moreover,
by Remark \ref{rmk isom},
we can show the same statement for $\sigma_{1}, \sigma_{2},\dots, \sigma_{m}\in \mathrm{\mathrm{Rep}}_{\mathrm{f}}(G)$ with $\sigma_{j}\perp 1_{\widehat{G}}$ and $\sigma=1_{\widehat{G}}$.

For general $\sigma\in\widehat{G}$,
suppose $E_{\sigma}$ is the minimal conditional expectation from $A^{G}$ onto $\rho_{\sigma}(A^{G})$ with index $d$ and $R_{\sigma}\in(\id_{A^G}, \overline{\rho}_{\sigma}\circ\rho_{\sigma})$ is the isometry in Lemma \ref{conjugate}.
As in the proof of Lemma 4.4 in \cite{I1},
we have $E_{\sigma}(x)=\rho_{\sigma}(R_{\sigma}^{*}\overline{\rho}_{\sigma}(x)R_{\sigma})$ for every $x\in A^G$.
Since we have $\overline{\sigma}\otimes\sigma_{j}\perp1_{\widehat{G}}$ and $\rho_{\overline{\sigma}\otimes\sigma_{j}}\sim\overline{\rho}_{\sigma}\circ\rho_{\sigma_j}$ for all $j$, 
there are $h_1,\dots, h_{n}\in A^G$ satisfying
\[
\|\sum_{i}h_{i}^{*}h_{i}\|\leq1,
\;\sum_{i}h_{i}^{*}bh_{i}\approx_{\epsilon}b,\mathrm{\ and\ } 
\sum_{i}\overline{\rho}_{\sigma}\circ\rho_{\sigma_j}(h_{i}^{*})\overline{\rho}_{\sigma}(a_{k})R_{\sigma}h_{i}\approx_{\epsilon}0
\]
for every $j, k$.
We get the statement by the following inequalities:
\begin{align*}
&\big(\sum_{i}\rho_{\sigma_j}(h_{i}^{*})a_{k}\rho_{\sigma}(h_{i})\big)^{*}\big(\sum_{i}\rho_{\sigma_j}(h_{i}^{*})a_{k}\rho_{\sigma}(h_{i})\big)\\
\leq&dE_{\sigma}\Big(\big(\sum_{i}\rho_{\sigma_j}(h_{i}^{*})a_{k}\rho_{\sigma}(h_{i})\big)^{*}\big(\sum_{i}\rho_{\sigma_j}(h_{i}^{*})a_{k}\rho_{\sigma}(h_{i})\big)\Big)\\
= &d\sum_{i, i^{\prime}}\rho_{\sigma}(h_{i}^{*})E_{\sigma}\big(a_{k}^{*}\rho_{\sigma_j}(h_{i}h_{i^{\prime}}^{*})a_{k}\big)\rho_{\sigma}(h_{ i^{\prime}})\\
= &d\sum_{i, i^{\prime}}\rho_{\sigma}(h_{i}^{*})\rho_{\sigma}\Big(R_{\sigma}^{*}\overline{\rho}_{\sigma}\big(a_{k}^{*}\rho_{\sigma_j}(h_{i}h_{i^{\prime}}^{*})a_{k}\big)R_{\sigma}\Big)\rho_{\sigma}(h_{i^{\prime}})\\
= &d\rho_{\sigma}\big(\sum_{i}\overline{\rho}_{\sigma}\circ\rho_{\sigma_j}(h_{i}^{*})\overline{\rho}_{\sigma}(a_{k})R_{\sigma}h_{i}\big)^{*}\rho_{\sigma}\big(\sum_{i}\overline{\rho}_{\sigma}\circ\rho_{\sigma_j}(h_{i}^{*})\overline{\rho}_{\sigma}(a_{k})R_{\sigma}h_{i}\big)<d\epsilon^{2}.
\end{align*}
\end{proof}

To show Theorem \ref{main},
we use the following lemma.

\begin{lem}\label{bimod}
Let $D$ be a simple \Cs-algebra and $\rho\colon D\rightarrow D$ be an irreducible endomorphism with a finite index. 
If a norm closed subspace $\cE\subset D$ satisfies $\rho(D)\cE D\subset\cE$,
then either $\cE=D$ or $\cE=0$.
\end{lem}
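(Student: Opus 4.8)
The plan is, assuming $\cE\neq 0$, to prove $\cE = D$. First I would use the finite index of $\rho$ to reduce to a genuine bimodule. The minimal conditional expectation $E_\rho\colon D\to\rho(D)$ has finite index, hence is faithful, and by Theorem~\ref{p=e} applied to $\rho(D)\subseteq D$ there is an isometry $W\in M(D)$ with $x = \mathrm{Ind}(E_\rho)\cdot E_\rho(xW^{*})W$ for all $x\in D$; in particular $\overline{\rho(D)D} = D = \overline{D\rho(D)}$, so $\rho$ extends to a unital endomorphism of $M(D)$, and since $\cE$ is norm closed and $\rho(e_\lambda)\to 1$ strictly for an approximate unit $(e_\lambda)$ of $D$ we get $\cE = \overline{\rho(D)\cE D}$, $\rho(M(D))\cE\subseteq\cE$ and $\cE M(D)\subseteq\cE$. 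Thus $\cE$ is a closed $\rho(D)$–$D$–sub-bimodule of $D$. Now, from $\cE D\subseteq\cE$ (hence $D\cE^{*}\subseteq\cE^{*}$), the space $\overline{\cE^{*}\cE}$ is a closed two-sided ideal of the simple algebra $D$, so $\overline{\cE^{*}\cE} = D$. Using this, $B:=\overline{\cE\cE^{*}}$ is a hereditary \Cs-subalgebra of $D$, invariant under left and right multiplication by $\rho(D)$; moreover $B = D$ already implies $\cE = D$, since picking an approximate unit $(u_\lambda)$ of $D$ whose members are norm limits of finite sums $\sum_i\xi_i\eta_i^{*}$ with $\xi_i,\eta_i\in\cE$, we have $u_\lambda x = \lim\sum_i\xi_i(\eta_i^{*}x)\in\cE D\subseteq\cE$ for every $x\in D$, whence $x = \lim_\lambda u_\lambda x\in\cE$. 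So it suffices to show $B = D$.

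Since $\rho(D)B\subseteq B$, $B\rho(D)\subseteq B$, $\rho(D)\rho(D)\subseteq\rho(D)$ and $BB\subseteq B$, the set $B\cap\rho(D)$ is a closed two-sided ideal of the simple algebra $\rho(D)$, hence equals $0$ or $\rho(D)$. If $B\cap\rho(D) = \rho(D)$, i.e.\ $\rho(D)\subseteq B$, then for $a,x,c\in D$, factoring $a = a_1a_2$, $c = c_1c_2$ gives $\rho(a)x\rho(c) = \rho(a_1)\bigl(\rho(a_2)x\rho(c_1)\bigr)\rho(c_2)\in BDB\subseteq B$ (as $B$ is hereditary and contains $\rho(D)$); hence $B\supseteq\overline{\rho(D)D\rho(D)} = D$ (the equality because $\rho$ is non-degenerate and $\rho(D)$ is full in $D$), so $B = D$ and we are done.

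It remains to rule out $B\cap\rho(D) = 0$ when $\cE\neq 0$; since $B\subseteq\cE$ and $\cE\cap\rho(D)$ is also a closed ideal of $\rho(D)$, this is the same as showing $\rho(D)\subseteq\cE$. Here I would bring in the conjugate $\overline{\rho}$ of $\rho$ with the isometries $R\in(\id_D,\overline{\rho}\rho)$ and $\overline{R}\in(\id_D,\rho\overline{\rho})$ of Lemma~\ref{conjugate}. A direct computation from $\rho(D)\cE\subseteq\cE$, $\cE\rho(D)\subseteq\cE$ and the intertwining relations ($dR^{*} = R^{*}\overline{\rho}\rho(d)$, $Rd = \overline{\rho}\rho(d)R$) shows that $\cF := \overline{R^{*}\overline{\rho}(\cE)R}$ satisfies $D\cF\subseteq\cF$ and $\cF D\subseteq\cF$, i.e.\ $\cF$ is a closed $D$–$D$–sub-bimodule of $D$, hence $\cF\in\{0,D\}$ by simplicity (a nonzero such $\cF$ contains $\overline{DyD} = D$). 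On the other hand, as in the proof of Lemma~4.4 of \cite{I1} one has $R^{*}\overline{\rho}(x)R = \rho^{-1}(E_\rho(x))$ for $x\in D$, so $\cF = \overline{\rho^{-1}(E_\rho(\cE))}$; if $\cF = 0$ then $E_\rho(\cE) = 0$, and since $\xi\xi^{*}\in\cE D\subseteq\cE$ for $\xi\in\cE$ and $E_\rho$ is faithful this forces $\cE = 0$. Thus $\cE\neq 0$ gives $\cF = D$, i.e.\ $\overline{E_\rho(\cE)} = \rho(D)$; feeding this back through the bimodule structure — equivalently, using that the irreducible finite-index $D$–$D$–bimodule ${}_{\rho}D_{D}$ admits no nonzero proper closed sub-bimodule, which is implicit in the decomposition theory of Section~4 of \cite{I1} — yields $\rho(D)\subseteq\cE$ and completes the proof. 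I expect this last implication to be the technical heart of the argument and the place where finiteness of the index is used essentially (it can be phrased as: any closed $\rho(D)$–$D$–sub-bimodule of $D$ is orthogonally complemented in the Hilbert $D$-module $D_D$ by a sub-bimodule, after which one gets $\cE = qD$ with $q\in M(D)\cap\rho(D)' = \IC$); the rest is formal.
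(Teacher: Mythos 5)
Your reduction steps are mostly sound (the bimodule identities, $B:=\overline{\mathrm{span}}\,\cE\cE^{*}\subset\cE$ being a hereditary subalgebra invariant under $\rho(D)$, the ideal argument for $B\cap\rho(D)$, and the computation $\cF=\overline{R^{*}\overline{\rho}(\cE)R}=\overline{\rho^{-1}(E_{\rho}(\cE))}$ together with faithfulness of $E_{\rho}$, which correctly shows $\overline{\mathrm{span}}\,E_{\rho}(\cE)=\rho(D)$ when $\cE\neq0$). But the proof does not close: the passage from $\overline{\mathrm{span}}\,E_{\rho}(\cE)=\rho(D)$ to $\rho(D)\subseteq\cE$ is exactly the point you leave unproved, and the two justifications you offer do not work as stated. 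Saying that ``the irreducible finite-index bimodule ${}_{\rho}D_{D}$ admits no nonzero proper closed sub-bimodule'' is circular --- that assertion \emph{is} the lemma being proved, and it is not something you can quote from Section 4 of \cite{I1}, which classifies (co)isometries and direct summands, not arbitrary norm-closed sub-bimodules. The alternative phrasing, that every closed $\rho(D)$--$D$-sub-bimodule of $D$ is orthogonally complemented in the Hilbert module $D_{D}$ and hence of the form $qD$ with $q\in M(D)\cap\rho(D)^{\prime}=\IC$, is likewise unsubstantiated: closed submodules of a Hilbert \Cs-module are in general not orthogonally complemented, and establishing complementation here would require a genuine argument using the finite index --- this is the technical heart you acknowledge but do not supply. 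A smaller point: your appeal to Theorem \ref{p=e} to produce the isometry $W$ is not legitimate, since that theorem assumes $D$ (and $\rho(D)$) stable, which the lemma does not; the nondegeneracy $\overline{\rho(D)D}=D$ you need instead follows from finite index via Lemma 2.6 of \cite{I1} (an approximate unit of $D$ inside $\rho(D)$), which is how the paper argues.

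For comparison, the paper closes the missing step without any sub-bimodule complementation: it splits into cases according to whether $\overline{\mathrm{span}}\,\cE\cE^{*}\cap\rho(D)$ is zero. If it is nonzero, simplicity of $\rho(D)$ gives $\rho(D)\subset\overline{\mathrm{span}}\,\cE\cE^{*}$, and an approximate unit of $D$ contained in $\rho(D)$ yields $\cE=D$ (this matches your first case). If it is zero, the paper forms the \Cs-algebra $B=\overline{\mathrm{span}}\,\cE\cE^{*}+\rho(D)$, notes that $E_{\rho}|_{B}$ is a finite-index expectation onto the simple algebra $\rho(D)$, invokes Theorem 3.4 of \cite{I1} together with $M(B)\cap B^{\prime}\subset M(D)\cap\rho(D)^{\prime}=\IC$ to conclude that $B$ is simple, and then observes that $\overline{\mathrm{span}}\,\cE\cE^{*}$ is an ideal of $B$ meeting $\rho(D)$ trivially, hence zero, so $\cE=0$. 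Some argument of this finite-index flavour (or a proof of your complementation claim) is needed; without it your proposal has a genuine gap.
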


\begin{proof}
First,
we assume $\overline{\mathrm{span}}\;\cE\cE^{*}\cap\rho(D)\neq0$.
Since $D\cong\rho(D)$ is simple and the \Cs-subalgebra $\overline{\mathrm{span}}\;\cE\cE^{*}$ of $D$ is closed under multiplications of elements in $\rho(D)$,
we have $\rho(D)\subset\overline{\mathrm{span}}\;\cE\cE^{*}\subset D$.
By Lemma 2.6 of \cite{I1},
there are approximate units $(d_{\nu})_{\nu}$ of $D$ which are contained in $\rho(D)\subset\overline{\mathrm{span}}\;\cE\cE^{*}$.
Since $\cE$ is norm closed and $d_{\nu}x\in\cE$ hold for all $\nu$ and all $x\in D$,
we get $\cE=D$.

Secondly,
suppose $\overline{\mathrm{span}}\;\cE\cE^{*}\cap\rho(D)=0$ and define $B:=\overline{\mathrm{span}}\;\cE\cE^{*}+\rho(D)\subset D$.
The restriction $E_{\rho}|_{B}\colon B\rightarrow\rho(D)$ of the minimal conditional expectation $E_{\rho}$ from $D$ onto $\rho(D)$ is of finite index.
Then,
$\overline{\mathrm{span}}\;\cE\cE^{*}$ is a finite direct sum of simple \Cs-algebras by Theorem 3.4 of \cite{I1}.
Since $\rho$ is irreducible,
we have $M(B)\cap B^{\prime}\subset M(D)\cap\rho(D)^{\prime}=\IC$. 
Thus,
$B$ is simple.
Since $\overline{\mathrm{span}}\;\cE\cE^{*}$ is an ideal of $B$ with $\overline{\mathrm{span}}\;\cE\cE^{*}\cap\rho(D)=0$,
we have $\cE=0$.
\end{proof}

We get the following theorem to give examples of compact group actions for which the Galois correspondence holds.

\begin{thm}\label{main}
Let $\alpha\colon G\acts A$ be a faithful action of a compact second countable group $G$ on a $\sigma$-unital \Cs-algebra $A$.
Suppose $A^G$ is simple and the inclusion $A^{G}\subset A$ satisfies Condition ($*$).
Then,
for every intermediate \Cs-subalgebra $D$ between $A^G$ and $A$,
there exists a unique closed subgroup $H$ of $G$ such that $D=A^H$.

In this case,
the stabilized action $\alpha\otimes {\rm id}_{\IK}\colon G\acts A\otimes\IK$ satisfies the assumptions in Section \ref{sec decomp}.
Under the notations $\{\cH_{\sigma}\subset M(A\otimes\IK)\}_{\sigma\in\widehat{G}}$ and $\eta\colon A\otimes\IK\rightarrow \cE_{E}$ used in Section \ref{sec decomp},
let $B$ be an intermediate \Cs-subalgebra between $(A\otimes\IK)^{G}$ and $A\otimes\IK$.
Put $\cL_{\sigma}:=M(B)\cap\cH_{\sigma}$ for every $\sigma\in\widehat{G}$.
Then,
we have 
\[
\overline{\eta(B)}=\overline{\rm span}\{\xi\in\eta(\cL_{\sigma}^{*}(A\otimes\IK)^{G})\mid \sigma\in\widehat{G}\}.
\]
\end{thm}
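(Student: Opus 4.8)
The inclusion $\supseteq$ in the displayed identity is immediate: since $\cL_\sigma^{*}\subseteq M(B)$ and $(A\otimes\IK)^{G}\subseteq B$ we have $\cL_\sigma^{*}(A\otimes\IK)^{G}\subseteq M(B)B\subseteq B$, hence $\eta(\cL_\sigma^{*}(A\otimes\IK)^{G})\subseteq\eta(B)$. The real work is the reverse inclusion, and the plan is to prove it and then deduce the first assertion by showing that the subspaces $\cL_\sigma$ form a Tannakian family, so that $B$ is forced to be a fixed point algebra. (For the first assertion: given an intermediate $D$ between $A^{G}$ and $A$, apply the statement to $B:=D\otimes\IK$, which is intermediate between $(A\otimes\IK)^{G}$ and $A\otimes\IK$, and recover $D=A^{H}$ by cutting down with a rank-one projection of $\IK$; the stabilized system satisfies the hypotheses of Section \ref{sec decomp} by Lemma \ref{easy}(1),(2),(5) and Remark \ref{rmk irrd}.)

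Recall from Section \ref{sec decomp} that $\cH_\sigma$ is a finite-dimensional Hilbert subspace of $M(A\otimes\IK)$ with orthonormal basis $\{v_{\sigma,i}\}$, that $\cH_{1_{\widehat{G}}}=\IC$ since $M(A\otimes\IK)\cap((A\otimes\IK)^{G})'=\IC$, and that under the unitary $G$-action $u_{g}\eta(a)=\eta(\alpha_{g}(a))$ the module $\cE_{E}$ has the isotypic decomposition $\cE_{E}=\overline{\bigoplus}_{\sigma\in\widehat{G}}\,\overline{\eta(\cH_\sigma^{*}(A\otimes\IK)^{G})}$ (Lemma \ref{decomp}, Lemma \ref{endom}(3),(4)), where $\eta(P_\sigma(b))$ lies in the $\sigma$-summand, $P_\sigma(b)=\sum_{i}v_{\sigma,i}^{*}E(v_{\sigma,i}b)$, and $\eta(b)=\sum_{\sigma}\dim(\sigma)\,\eta(P_\sigma(b))$ converges in $\cE_{E}$. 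Fix $\sigma$. For $w\in\cH_\sigma$ put $\cF_{w}:=\overline{\mathrm{span}}\{E(wb)\mid b\in B\}\subseteq(A\otimes\IK)^{G}$. Using $wx=\rho_\sigma(x)w$ for $x\in(A\otimes\IK)^{G}$ one checks $\rho_\sigma((A\otimes\IK)^{G})\,\cF_{w}\,(A\otimes\IK)^{G}\subseteq\cF_{w}$, so by Lemma \ref{bimod} (with $D=(A\otimes\IK)^{G}$ and the irreducible finite-index endomorphism $\rho=\rho_\sigma$, Lemma \ref{endom}(1),(5)) either $\cF_{w}=0$ or $\cF_{w}=(A\otimes\IK)^{G}$. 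Consequently $\cK_\sigma:=\{w\in\cH_\sigma\mid\cF_{w}=0\}$ is a linear subspace, and $\cK_\sigma\perp\cL_\sigma$ in $\cH_\sigma$: if $w\in\cK_\sigma$, $w'\in\cL_\sigma$, $c\in(A\otimes\IK)^{G}$, then $w'^{*}c\in B$, so the scalar $E(ww'^{*})$, which up to a positive constant is the inner product of $w$ and $w'$ in $\cH_\sigma$, satisfies $E(ww'^{*})c=E\bigl(w(w'^{*}c)\bigr)\in\cF_{w}=0$, hence $w\perp w'$.

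The crux, and the step I expect to need the most care, is the reverse containment $\cK_\sigma^{\perp}\subseteq\cL_\sigma$; equivalently, that $w\perp(M(B)\cap\cH_\sigma)$ forces $E(wb)=0$ for all $b\in B$. If this failed, the dichotomy would give a nonzero $w\perp\cL_\sigma$ with $\cF_{w}=(A\otimes\IK)^{G}$, so that $\{E(wb)\mid b\in B\}$ is norm dense in $(A\otimes\IK)^{G}$; I would then derive a contradiction by producing, for each $c\in(A\otimes\IK)^{G}$, a norm-approximation of $w^{*}c$ by elements of $B$, whence $w^{*}(A\otimes\IK)^{G}\subseteq B$ and symmetrically $(A\otimes\IK)^{G}w^{*}\subseteq B$, so $w\in M(B)\cap\cH_\sigma=\cL_\sigma$, contradicting $w\perp\cL_\sigma$. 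This conversion is exactly where Condition ($*$) is genuinely used, through Lemma \ref{outer1}: starting from $b\in B$ with $E(wb)$ equal (up to a small error) to a prescribed element, one approximates $b$ in norm by an element $\sum_{\pi\in F}\sum_{j}v_{\pi,j}^{*}c_{\pi,j}$ of the dense $*$-algebra $A_{0}$ of Lemma \ref{decomp}, with $F$ finite and $\sigma\in F$; Lemma \ref{outer1}, applied to $\sigma$, to the representations $\{\pi\in F\setminus\{\sigma\}\}$ (all orthogonal to $\sigma$) and to the coefficients $c_{\pi,j}$, supplies $h_{1},\dots,h_{n}\in(A\otimes\IK)^{G}\subseteq B$ with $\|\sum_{i}h_{i}^{*}h_{i}\|\le 1$ by which one may average so as to annihilate the off-$\sigma$ cross-terms while retaining the $\sigma$-data; processing the averaged element of $B$ through the maps $E(v_{\sigma,i}\,\cdot\,)$ and passing to a strict limit yields the required elements of $M(B)$. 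It is precisely because $B$ need not be $G$-invariant, so $\overline{\eta(B)}$ need not be $G$-invariant and $P_\sigma(b)$ need not lie in $B$, that plain $G$-averaging cannot be used here and Condition ($*$) is essential.

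Once $\cK_\sigma^{\perp}=\cL_\sigma$ is known, the reverse inclusion follows: for $b\in B$ the coefficients $E(wb)$ of $\eta(P_\sigma(b))$ vanish for $w\in\cK_\sigma$, so $\eta(P_\sigma(b))\in\overline{\mathrm{span}}\{\eta(w^{*}c)\mid w\in\cK_\sigma^{\perp}=\cL_\sigma,\ c\in(A\otimes\IK)^{G}\}=\overline{\eta(\cL_\sigma^{*}(A\otimes\IK)^{G})}$, and summing over $\sigma$ gives $\eta(b)\in\overline{\mathrm{span}}\{\eta(\cL_\sigma^{*}(A\otimes\IK)^{G})\mid\sigma\in\widehat{G}\}$, which is the displayed identity. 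For the first assertion I would then note that, $M(B)$ being a $*$-algebra containing $M((A\otimes\IK)^{G})$, the family $\{\cL_\sigma\subseteq\cH_\sigma\cong V_\sigma\}_{\sigma\in\widehat{G}}$ has $\cL_{1_{\widehat{G}}}=\IC$, is conjugation-stable ($\cL_\sigma^{*}\cong\cL_{\overline\sigma}$ by Lemma \ref{endom}(6)), is carried into itself by every intertwiner $S\in(\rho_\sigma,\rho_\pi)$, and is closed under Clebsch--Gordan (for $w\in\cL_\sigma$, $w'\in\cL_\pi$ one has $ww'\in\sum_{i}S_{i}\cL_{\sigma_{i}}$ whenever $\sigma\otimes\pi=\bigoplus_{i}\sigma_{i}$ and $\cH_\sigma\cH_\pi=\sum_{i}S_{i}\cH_{\sigma_{i}}$, using Lemma \ref{isom}); by Tannaka--Krein duality for compact groups there is a unique closed subgroup $H\le G$ with $\cL_\sigma=V_\sigma^{H}$ for all $\sigma$. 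Feeding this back into the displayed identity and using the isotypic structure of $\cE_{E}$ together with $H$-averaging, one gets $\overline{\eta(B)}=\cE_{E}^{H}=\overline{\eta((A\otimes\IK)^{H})}$; the inclusion $B\subseteq(A\otimes\IK)^{H}$ is immediate since $\eta$ is injective, and for $(A\otimes\IK)^{H}\subseteq B$ one approximates $a\in(A\otimes\IK)^{H}$ in norm by an element of $A_{0}$ and averages over $H$, landing in $\sum_\sigma(\cH_\sigma^{H})^{*}(A\otimes\IK)^{G}=\sum_\sigma\cL_\sigma^{*}(A\otimes\IK)^{G}\subseteq B$. Hence $B=(A\otimes\IK)^{H}$, so $D=A^{H}$, and uniqueness of $H$ is the injectivity of $H\mapsto\{V_\sigma^{H}\}$.
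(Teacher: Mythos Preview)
Your overall architecture matches the paper's closely: both prove the displayed identity first (the inclusion $\supseteq$ being trivial), then invoke the Tannakian step (the paper cites Lemma~3.16 of \cite{ILP}, which is exactly your closure-under-conjugation/tensor argument) to obtain the unique $H$ with $\cL_\sigma=\cH_\sigma^{H}$, and finally deduce $B=(A\otimes\IK)^{H}$ and $D=A^{H}$ just as you outline. You also correctly locate the crux as the implication ``$w\perp\cL_\sigma\Rightarrow E(wB)=0$'', and your use of Lemma~\ref{bimod} for the dichotomy is the same as the paper's.

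Where you diverge is the mechanism for the crux, and there your sketch has a gap. The paper does \emph{not} use Lemma~\ref{outer1} here. Passing to the conjugate basis $\bar u_{\sigma,j}:=\sqrt{\dim\sigma}\,u_{\sigma,j}^{*}\overline{R}_\sigma\in\cH_{\bar\sigma}$ and choosing $x\in B$ with $E(\bar u_{\sigma,1}^{*}x)\ge 0$ of norm one, it applies Lemma~\ref{easy}(4) directly to the finitely many elements $\bar u_{\sigma,j}^{*}x$ to get $h_i\in A_s^{G}$ with $\sum_i h_i^{*}\bar u_{\sigma,j}^{*}xh_i\approx a_j\in A_s^{G}$; then Lemma~\ref{simple1} supplies $(s_k)_k\subset A_s^{G}$ with $\sum_k s_k^{*}s_k=1$ strictly and $\sum_k s_k^{*}a_j s_k\approx\mu_j 1$ with $\mu_1=1$, $|\mu_j|\le 4\|x\|$. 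One then forms the strictly convergent sum $\sum_{i,k}\rho_{\bar\sigma}(s_k^{*}h_i^{*})\,x\,h_i s_k$, every summand lying in $B$, hence the sum is in $M(B)$; rewriting via $\rho_{\bar\sigma}(c)=\sum_j\bar u_{\sigma,j}c\,\bar u_{\sigma,j}^{*}$ shows it equals $\sum_j\bar u_{\sigma,j}\bigl(\sum_{i,k}s_k^{*}h_i^{*}\bar u_{\sigma,j}^{*}xh_is_k\bigr)\approx\bar u_{\sigma,1}+\sum_{j\ge 2}\mu_j\bar u_{\sigma,j}$. A compactness argument (the $\mu_j$ are bounded) then forces $M(B)\cap\cH_{\bar\sigma}=\cL_{\bar\sigma}$ to meet the affine set $\{\bar u_{\sigma,1}+\sum_{j\ge 2}d_j\bar u_{\sigma,j}\}$, contradicting $\bar u_{\sigma,1}\perp\cL_{\bar\sigma}$.

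Your route via Lemma~\ref{outer1} runs into two problems. First, to exploit its conclusion $\sum_i\rho_{\pi}(h_i^{*})a\,\rho_\sigma(h_i)\approx 0$ for $\pi\ne\sigma$, the natural averaging of $b\in B$ is $b\mapsto\sum_i h_i^{*}\,b\,\rho_\sigma(h_i)$; but the lemma's \emph{preservation} clause is $\sum_i h_i^{*}b_0 h_i\approx b_0$, which gives no control on the surviving $\pi=\sigma$ term $\sum_i\rho_\sigma(h_i^{*})c_{\sigma,j}\rho_\sigma(h_i)$. Second, even if the averaged element of $B$ were close to the $\sigma$-isotypic component, ``processing through $E(v_{\sigma,i}\cdot)$'' lands you in $A_s^{G}$, and reassembling as $\sum_i v_{\sigma,i}^{*}E(v_{\sigma,i}\cdot)$ presupposes $v_{\sigma,i}\in M(B)$, which is what you are trying to prove. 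The paper avoids both issues by aiming not for $w$ itself but for \emph{some} element of $\cL_{\bar\sigma}$ with a prescribed leading coefficient: Lemma~\ref{simple1} is exactly what makes the reassembled sum land back in the finite-dimensional space $\cH_{\bar\sigma}$, and the compactness then closes the argument.
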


\begin{proof}
First,
we show the second half.
For simplicity of presentation,
$(A\otimes\IK,(A\otimes\IK)^{G},\alpha\otimes{\rm id}_{\IK})$ are denoted by $(A_{s},A^{G}_{s},\alpha^{s})$.
By Lemma \ref{easy} (2) and (5),
the inclusion $A_{s}^{G}\subset A_{s}$ satisfies Condition ($*$) and is \Cs-irreducible.
Then,
we get $M(A_{s})\cap (A_{s}^{G})^{\prime}=\IC$ by Remark \ref{rmk irrd}.
This implies that $\alpha^{s}$ satisfies the assumptions in Section \ref{sec decomp}.
For an intermediate subalgebra $B$ in the statement,
the inclusion $\overline{\mathrm{span}}\{\xi\in\eta(\cL_{\sigma}^{*}A_{s}^G)\mid\sigma\in\widehat{G}\}\subset \overline{\eta(B)}$ is trivial.
It suffices to show the converse.
Let $\{u_{\sigma, j}\}_{j=1}^{\text{dim}(\sigma)}$ be orthonormal basis of $\cH_{\sigma}$ such that $\cL_{\sigma}=\mathrm{span}\{u_{\sigma, j}\}_{j=m_{\sigma}}^{\text{dim}(\sigma)}$.
We show that if $j\lneq m_{\sigma}$,
then $E(u_{\sigma, j}B)=0$.
Suppose there is an index $j_{0}\lneq m_{\sigma}$ with $E(u_{\sigma, j_0}B)\neq0$.
We may assume $j_0=1$ without loss of generality.
By Lemma \ref{bimod},
we get $\overline{E(u_{\sigma, 1}B)}=A_{s}^G$.
Let $\overline{u}_{\sigma, i}:=\sqrt{\text{dim}(\sigma)}u_{\sigma, i}^{*}\overline{R}_{\sigma}$.
Since the map $\cH_{\sigma}\ni v\mapsto\sqrt{\text{dim}(\sigma)}v^{*}\overline{R}_{\sigma}\in\cH_{\overline{\sigma}}$ is an antiunitary operator,
we have 
\[\cL_{\overline{\sigma}}=\mathrm{span}\{\overline{u}_{\sigma, i}\}_{i=m_{\sigma}}^{\text{dim}(\sigma)}
\] and \[\overline{E(\overline{u}_{\sigma, 1}^{*}B)}=\overline{R}_{\sigma}^{*}\overline{E(u_{\sigma, 1}B)}=A_{s}^G.
\]
We can take $x\in B$ such that $E(\overline{u}_{\sigma, 1}^{*}x)\geq0$ and $\|E(\overline{u}_{\sigma, 1}^{*}x)\|=1$ hold.
Set $b:=E(\overline{u}_{\sigma, 1}^{*}x)$.
By (4) of Lemma \ref{easy},
for any $\epsilon>0$,
there exist $h_{1}, h_{2},\dots, h_{n}\in A_{s}^{G}$ such that 
\[
\|\sum_{i}h_{i}^{*}h_{i}\|\leq1,\;
\sum_{i}h_{i}^{*}bh_{i}\approx_{\epsilon}b,
\text{ and }\sum_{i}h_{i}^{*}\overline{u}_{\sigma, j}^{*}xh_{i}\approx_{\epsilon}\sum_{i}E(h_{i}^{*}\overline{u}_{\sigma, j}^{*}xh_{i})
\]
hold for $j=1,\dots, \text{dim}(\sigma)$.
Let $a_{j}:=\sum_{i}E(h_{i}^{*}\overline{u}_{\sigma, j}^{*}xh_{i})$ for all $j$.
We have $\|a_j\|\leq\|x\|$.
By Lemma \ref{simple1},
we can take a sequence $\{s_{k}\}_{k=1}^{\infty}$ of $A_{s}^{G}$ and complex numbers $\mu_{1},\dots, \mu_{\text{dim}(\sigma)}\in\IC$ with $\mu_{1}=\|b\|=1$ and $|\mu_{j}|\leq 4\|x\|$ such that 
\[
\sum_{k}s_{k}^{*}s_{k}=1
\text{ and }\sum_{k}s_{k}^{*}a_{j}s_{k}=\mu_{j}1_{M(A_{s}^G)}
\]
hold for all $j$,
where the sums $\sum_{k}s_{k}^{*}s_{k}$ and $\sum_{k}s_{k}^{*}a_{j}s_{k}$ converge in the strict topology.
Hence,
we get 
\begin{equation*}
\sum_{i, k}\rho_{\overline{\sigma}}(s_{k}^{*}h_{i}^{*})xh_{i}s_{k}=\sum_{j}\overline{u}_{\sigma, j}\sum_{i, k}s_{k}^{*}h_{i}^{*}\overline{u}_{\sigma, j}^{*}xh_{i}s_{k}\approx_{2\text{dim}(\sigma)\epsilon}\sum_{j}\mu_{j}\overline{u}_{\sigma, j}=\overline{u}_{\sigma, 1}+\sum_{j=2}^{\text{dim}(\sigma)}\mu_{j}\overline{u}_{\sigma, j}.
\end{equation*}
By Lemma 6.3 of \cite{OP},
the above sums $\sum_{i, k}\rho_{\overline{\sigma}}(s_{k}^{*}h_{i}^{*})xh_{i}s_{k}$ and $\sum_{i, k}s_{k}^{*}h_{i}^{*}\overline{u}_{\sigma, j}^{*}xh_{i}s_{k}$ converge in the strict topology.
Since the left-hand side is contained in $M(B)$ and the right-hand side is contained in the compact set $\{\overline{u}_{\sigma, 1}+\sum_{j=2}^{\text{dim}(\sigma)}d_{j}\overline{u}_{\sigma, j}\mid d_{j}\in\IC,\;|d_{j}|\leq4\|x\|\}$,
we get 
\[
M(B)\cap\{\overline{u}_{\sigma, 1}+\sum_{j=2}^{\text{dim}(\sigma)}d_{j}\overline{u}_{\sigma, j}\mid d_{j}\in\IC\}\neq\emptyset.
\]
This contradicts the assumption that $\overline{u}_{\sigma, 1}$ is orthogonal to $\cL_{\overline{\sigma}}$.
We get $E(u_{\sigma, j}B)=0$ for all $j\lneq m_{\sigma}$.
Since we have $\sum_{\sigma\in\widehat{G}}\text{dim}(\sigma)\sum_{j=1}^{\text{dim}(\sigma)}u_{\sigma, j}^{*}eu_{\sigma, j}=1$ in $\cL(\cE_{E})$,
for every $x\in B$,
\[
\eta(x)=\sum_{\sigma\in\widehat{G}}\text{dim}(\sigma)\sum_{j=m_{\sigma}}^{\text{dim}(\sigma)}\eta(u_{\sigma, j}^{*})E(u_{\sigma, j}x)
\] hold.
We get the second half of the theorem. 

To show the first half of the statement,
consider an intermediate subalgebra $D$ between $A^G$ and $A$.
Let $B$ be $D\otimes\IK$ and put $\cL_{\sigma}:=M(B)\cap\cH_{\sigma}$ as above.
By Lemma 3.16 of \cite{ILP},
there is a unique closed subgroup $H$ of $G$ such that $\cL_{\sigma}=\cH_{\sigma}^{H}$ for every $\sigma\in\widehat{G}$.
Let $E_{H}$ be the natural conditional expectation from $A_{s}$ onto $A_{s}^H$,
then $E_H$ induces an orthogonal projection from $\cH_\sigma$ onto $\cL_\sigma$.
Since every element $x\in A_{s}$ can be approximated by elements in the linear span of $\cup_{\sigma\in\widehat{G}}\cH_{\sigma}^{*}A_{s}^G$,
we have 
\[
A_{s}^{H}=E_{H}(\overline{\mathrm{span}}\{x\in\cH_{\sigma}^{*}A_{s}^G\mid\sigma\in\widehat{G}\})=\overline{\mathrm{span}}\{x\in\cL_{\sigma}^{*}A_{s}^G\mid\sigma\in\widehat{G}\}\subset B.
\]
The converse inclusion $B\subset A_{s}^H$ follows from 
\[
\eta(B)\subset\overline{\mathrm{span}}\{\xi\in\eta(\cL_{\sigma}^{*})A_{s}^G\mid\sigma\in\widehat{G}\}=(\cE_{E})^H.
\]
Therefore,
we get $D\otimes\IK=B=A_{s}^H=A^{H}\otimes\IK$.
This implies $D=A^H$.
\end{proof}

\begin{cor}\label{cor ab}
Let $\alpha\colon G\acts A$ be a quasi-product action of a compact second countable group $G$ on a separable simple \Cs-algebra $A$.
If the fixed point algebra $A^G$ is simple,
then the inclusion $A^{G}\subset A$ is \Cs-irreducible,
and the map
\[
\Phi\colon\{H\mid H\leq G\}\ni H\mapsto A^{H}\in\{D\mid A^{G}\subset D\subset A\}
\] 
is a bijection from the set of all closed subgroups of $G$ onto the set of all intermediate \Cs-subalgebras between $A^G$ and $A$.
\end{cor}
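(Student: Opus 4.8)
The plan is to obtain the corollary as an immediate consequence of Example~\ref{exa ab} and Theorem~\ref{main}; the one additional point is that $\alpha$ is faithful. First I would record that a quasi-product action is automatically faithful. Indeed, if $N:=\ker\alpha$ were non-trivial, then $A\rtimes_{\alpha}G$ would, by the standard analysis of crossed products along a normal subgroup, split as a direct sum of two non-zero ideals, corresponding to the orbit $\{1_{\widehat{N}}\}$ and its complement under the conjugation action of $G$ on $\widehat{N}$ (the decomposition being induced by the $G$-invariant ideal splitting $C^{*}(N)=\IC\oplus I_{0}$, with $I_{0}$ the non-zero kernel of the quotient onto the trivial-representation summand). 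This contradicts the primeness of $A\rtimes G$ required in Definition~\ref{def quasi product}, so $\ker\alpha=\{1_{G}\}$.

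Next, by Example~\ref{exa ab}, since $A$ is separable and simple, $\alpha$ is quasi-product, and $A^{G}$ is simple, the inclusion $A^{G}\subset A$ satisfies Condition~($*$) and is \Cs-irreducible; the latter is precisely the first assertion. As $\alpha$ is now a faithful action of a compact second countable group on the $\sigma$-unital \Cs-algebra $A$ with $A^{G}$ simple and $A^{G}\subset A$ satisfying Condition~($*$), all hypotheses of Theorem~\ref{main} hold. Applying it, every intermediate \Cs-subalgebra $D$ with $A^{G}\subset D\subset A$ is of the form $A^{H}$ for a unique closed subgroup $H\leq G$: the existence clause says that $\Phi$ is surjective onto the set of intermediate \Cs-subalgebras, and the uniqueness clause says that $\Phi$ is injective. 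Hence $\Phi$ is a bijection, which is the second assertion.

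I do not expect any genuine obstacle, since the corollary merely repackages the two preceding results. The only step needing a line of care is the faithfulness of $\alpha$, which is required to invoke Theorem~\ref{main} (and is in any case already implicit in the proof of Example~\ref{exa ab}, which applies the constructions of Section~\ref{sec decomp} to the stabilized action $\alpha\otimes\mathrm{id}_{\IK}$).
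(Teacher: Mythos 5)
Your proposal is correct and takes essentially the same route as the paper: Corollary \ref{cor ab} is deduced directly from Example \ref{exa ab} (which gives Condition ($*$) and \Cs-irreducibility of $A^{G}\subset A$) together with Theorem \ref{main}. Your extra verification that a quasi-product action is automatically faithful (the central projection $\int_{N}\lambda_{n}\,dn\in M(A\rtimes G)$ for $N=\ker\alpha$ would contradict primeness of $A\rtimes G$) is a sound detail that the paper leaves implicit and does not change the argument.
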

\begin{proof}
The statement follows from Theorem \ref{main} and Example \ref{exa ab}.
\end{proof}
When a compact second countable group $G$ is abelian or profinite,
as noted in the second half of Example \ref{exa ab},
the Galois correspondence also holds for a faithful action $\alpha\colon G\acts A$ on a $\sigma$-unital simple \Cs-algebra such that $A^{G}$ is simple and $M(A)\cap (A^{G})^{\prime}=\IC$.
In particular,
when $G$ is finite and $\alpha$ is outer,
the assumptions are satisfied (see also Corollary 6.6 of \cite{I1}). 

The Galois correspondence for compact abelian group actions is also discussed in \cite{Pel}.
In Theorem 16 of \cite{Pel},
it is proved that
there is a bijective correspondence between the set of closed subgroups and the set of intermediate \Cs-subalgebras, which are closed under the group action.
Let $G$ be a compact second countable abelian group,
and $A$ be a $\sigma$-unital \Cs-algebra.
If an action $G\acts A$ satisfies $M(A)\cap (A^{G})^{\prime}=\IC$ and $A^{G}$ is simple,
then the assumptions in Theorem 16 of \cite{Pel} are satisfied.
In this case,
every intermediate \Cs-subalgebra is automatically closed under the action of $G$.

\begin{cor}\label{cor isa}
Let $G$ be a compact second countable group, and $\alpha\colon G\acts A$ be an isometrically shift-absorbing action on a separable simple \Cs-algebra $A$.
Then,
the inclusion $A^{G}\subset A$ is \Cs-irreducible,
and the map
\[
\Phi\colon\{H\mid H\leq G\}\ni H\mapsto A^{H}\in\{D\mid A^{G}\subset D\subset A\}
\] 
is a bijection from the set of all closed subgroups of $G$ to the set of all intermediate \Cs-subalgebras between $A^G$ and $A$.
\end{cor}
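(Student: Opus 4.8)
The plan is to obtain the statement by combining Theorem~\ref{main} with Example~\ref{exa isa1}. Since $A$ is separable it is in particular $\sigma$-unital, and $G$ is compact second countable by hypothesis, so in order to invoke Theorem~\ref{main} it suffices to check three things: that $A^{G}$ is simple, that the inclusion $A^{G}\subset A$ satisfies Condition ($*$), and that $\alpha$ is faithful. The first two are precisely what is established in Example~\ref{exa isa1}, where it is also recorded (via Lemma~\ref{easy}(5)) that $A^{G}\subset A$ is \Cs-irreducible. Thus the only point not already available is the faithfulness of $\alpha$.

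For this I would observe that every isometrically shift-absorbing action is automatically faithful. Let $\fs\colon L^{2}(G)\to F_{\infty,\alpha}(A)$ be the map in the definition. From $\fs(\xi)^{*}\fs(\xi)=\langle\xi,\xi\rangle$ and the \Cs-identity we get $\|\fs(\xi)\|=\|\xi\|$, so $\fs$ is isometric, hence injective. If $g_{0}$ lies in the kernel of $\alpha$, then $\alpha_{g_{0}}=\id_{A}$ induces the identity on the central sequence algebra, so $\tilde{\alpha}_{\infty,g_{0}}=\id$; the intertwining relation then gives $\fs(\lambda_{g_{0}}\xi)=\tilde{\alpha}_{\infty,g_{0}}(\fs(\xi))=\fs(\xi)$ for every $\xi\in L^{2}(G)$, and injectivity of $\fs$ forces $\lambda_{g_{0}}=\id$ on $L^{2}(G)$. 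Since the left regular representation of a compact group on $L^{2}(G)$ is faithful, $g_{0}=1_{G}$, so $\alpha$ is faithful.

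With these three hypotheses verified, Theorem~\ref{main} applies: for every intermediate \Cs-subalgebra $D$ between $A^{G}$ and $A$ there is a unique closed subgroup $H\leq G$ with $D=A^{H}$. The existence of such an $H$ gives surjectivity of $\Phi$ and its uniqueness gives injectivity, so $\Phi$ is a bijection onto the set of intermediate \Cs-subalgebras; \Cs-irreducibility of $A^{G}\subset A$ was already noted above. I do not expect any real obstacle: the corollary is essentially a bookkeeping consequence of Example~\ref{exa isa1} and Theorem~\ref{main}, the only genuinely new (and short) ingredient being the automatic faithfulness of $\alpha$, which follows from the injectivity of $\fs$ together with the faithfulness of the left regular representation.
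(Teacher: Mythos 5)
Your proposal is correct and follows exactly the paper's route: the paper's proof of Corollary \ref{cor isa} is the one-line deduction from Theorem \ref{main} together with Example \ref{exa isa1}. Your additional verification that an isometrically shift-absorbing action is automatically faithful (via injectivity of $\fs$ and faithfulness of $\lambda$) is a correct and welcome filling-in of a hypothesis of Theorem \ref{main} that the paper leaves implicit.
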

\begin{proof}
The statement follows from Theorem \ref{main} and Example \ref{exa isa1}.
\end{proof}

\begin{cor}\label{cor free}
 Let $G$ be a compact second countable group,
$\alpha\colon G\acts A$ and $\beta\colon G\acts B$ be actions on unital \Cs-algebras with $A^{G},B^{G}\neq\IC$.
Suppose $\phi_{A}$ (resp. $\phi_{B}$) is a $G$-invariant state of $A$ (resp. $B$) such that the restrictions $\phi_{A}|_{A^{G}}\colon A^{G}\rightarrow \IC$ (resp. $\phi_{B}|_{B^{G}}$) induces a faithful GNS representation.
We also suppose that there is a Haar unitary $u$ in the centralizer of $\phi_{A}|_{A^{G}}$. 
The reduced free product \Cs-algebra $(A, \phi_{A})*(B, \phi_{B})$ is denoted by $(\fA, \phi)$ and the free product action $G\acts \fA$ is denoted by $\alpha*\beta$.
If $\alpha*\beta$ is faithful,
then the inclusion $\fA^{G}\subset \fA$ is \Cs-irreducible,
and the map 
\[
\Phi\colon\{H\mid H\leq G\}\ni H\mapsto \fA^{H}\in\{D\mid \fA^{G}\subset D\subset \fA\}
\] 
is a bijection from the set of all closed subgroups of $G$ to the set of all intermediate \Cs-subalgebras between $\fA^G$ and $\fA$.
\end{cor}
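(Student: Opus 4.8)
The plan is to apply Theorem \ref{main} to the inclusion $\fA^{G}\subset\fA$, extracting Condition ($*$) from Example \ref{exa free}. The key device is the auxiliary reduced free product $\fA_{0}:=(A^{G},\phi_{A}|_{A^{G}})*(B^{G},\phi_{B}|_{B^{G}})$. Let $E_{A}\colon A\to A^{G}$ and $E_{B}\colon B\to B^{G}$ be the canonical faithful conditional expectations; since $\phi_{A}$ and $\phi_{B}$ are $G$-invariant, we have $\phi_{A}|_{A^{G}}\circ E_{A}=\phi_{A}$ and $\phi_{B}|_{B^{G}}\circ E_{B}=\phi_{B}$, and the restrictions $\phi_{A}|_{A^{G}}$, $\phi_{B}|_{B^{G}}$ induce faithful GNS representations by hypothesis. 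Hence the first part of Example \ref{exa free}, applied to $A^{G}\subset A$ and $B^{G}\subset B$, realizes $\fA_{0}$ as a \Cs-subalgebra of $(A,\phi_{A})*(B,\phi_{B})=\fA$, and this natural inclusion of reduced free products carries a conditional expectation $E_{A}*E_{B}\colon\fA\to\fA_{0}$, which is faithful because $E_{A}$ and $E_{B}$ are. Since each $(\alpha*\beta)_{g}$ restricts to $\alpha_{g}$ on $A$ and to $\beta_{g}$ on $B$, it fixes $A^{G}$ and $B^{G}$ pointwise, hence fixes $\fA_{0}$; therefore $\fA_{0}\subset\fA^{G}\subset\fA$. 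Note that $\fA^{G}$ is in general strictly larger than $\fA_{0}$ (a tensor product of non-trivial representations of $G$ can contain the trivial one), so Example \ref{exa free} cannot be used directly on $\fA^{G}\subset\fA$, and one must instead route through the intermediate-inclusion statements in Lemma \ref{easy}.

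Next, the conclusion of Example \ref{exa free} applies to $\fA_{0}\subset\fA$: there is a Haar unitary in the centralizer of $\phi_{A}|_{A^{G}}$ and $B^{G}\neq\IC$, so $\fA_{0}\subset\fA$ satisfies Condition ($*$). By Lemma \ref{easy} (1) the intermediate inclusion $\fA^{G}\subset\fA$ then also satisfies Condition ($*$). To obtain simplicity of $\fA^{G}$ I would first prove that $\fA_{0}$ is simple. The averaging statement underlying Example \ref{exa free} (Proposition 3.2 of \cite{D}) provides, for every $x\in\fA_{0}$ and $\epsilon>0$, unitaries $z_{1},\dots,z_{n}\in\fA_{0}$ with $\|\frac{1}{n}\sum_{i}z_{i}xz_{i}^{*}-\phi(x)1\|<\epsilon$, where $\phi$ restricts to the free product state on $\fA_{0}$. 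Since $\fA_{0}$ acts faithfully on its free product Hilbert space with cyclic vacuum vector $\Omega$, any nonzero closed two-sided ideal of $\fA_{0}$ contains an element $y$ with $y\Omega\neq0$, hence with $\phi(y^{*}y)=\|y\Omega\|^{2}>0$; averaging $y^{*}y$ then produces an invertible element of the ideal, so $\fA_{0}$ is simple (this is, in essence, Avitzour's simplicity criterion for reduced free products). Now $\fA_{0}$ is simple, $\fA_{0}\subset\fA$ satisfies Condition ($*$), and $E_{A}*E_{B}$ is faithful, so Lemma \ref{easy} (5) shows that $\fA$ is simple and that $\fA_{0}\subset\fA$ is \Cs-irreducible; in particular the intermediate \Cs-algebra $\fA^{G}$ is simple.

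Finally, $\alpha*\beta$ is faithful by hypothesis, $\fA^{G}$ is simple, and $\fA^{G}\subset\fA$ satisfies Condition ($*$), so Theorem \ref{main} applies to $\fA^{G}\subset\fA$ and gives, for every intermediate \Cs-subalgebra $D$ with $\fA^{G}\subset D\subset\fA$, a unique closed subgroup $H\leq G$ with $D=\fA^{H}$; equivalently, $\Phi$ is a bijection onto the set of intermediate \Cs-subalgebras between $\fA^{G}$ and $\fA$. Applying Lemma \ref{easy} (5) once more to the canonical faithful conditional expectation $\int_{G}(\alpha*\beta)_{g}\,dg\colon\fA\to\fA^{G}$, together with the simplicity of $\fA^{G}$ and Condition ($*$), shows that $\fA^{G}\subset\fA$ is \Cs-irreducible, completing the proof. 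I expect the main obstacle to be the passage through $\fA_{0}$: one must recognize that $\fA^{G}$ is generally not the reduced free product of the fixed point algebras, and hence deduce both Condition ($*$) and the simplicity of $\fA^{G}$ via intermediate inclusions rather than directly; and one must establish the simplicity of $\fA_{0}$, which is the only genuinely non-formal ingredient and draws on the free product estimates in \cite{D}.
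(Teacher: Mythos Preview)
Your proof is correct and is precisely the intended unpacking of the paper's one-line argument (``follows from Theorem \ref{main} and Example \ref{exa free}''): you correctly route through the auxiliary free product $\fA_{0}=(A^{G},\phi_{A}|_{A^{G}})*(B^{G},\phi_{B}|_{B^{G}})$, use Example \ref{exa free} for Condition ($*$) on $\fA_{0}\subset\fA$, pass to $\fA^{G}\subset\fA$ via Lemma \ref{easy} (1), and obtain simplicity of $\fA^{G}$ from simplicity of $\fA_{0}$ and \Cs-irreducibility of $\fA_{0}\subset\fA$. One minor point: the faithfulness of the free-product conditional expectation $E_{A}*E_{B}\colon\fA\to\fA_{0}$ is true but is not quite an immediate consequence of the faithfulness of $E_{A}$ and $E_{B}$ individually; it is, however, a standard fact about reduced free products (Blanchard--Dykema), so invoking it is appropriate.
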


\begin{proof}
The statement follows from Theorem \ref{main} and Example \ref{exa free}.
\end{proof}

\begin{exa}
Let $G$ be a compact second countable group,
and $L\colon G\acts C(G)$ be the left translation action.
The free product action $(L\otimes\mathrm{id}_{C(\IT)})*\mathrm{id}_{C(\IT)}\colon G\acts C(G\times\IT)*C(\IT)$ satisfies the assumptions of Corollary \ref{cor free},
where $C(G\times\IT)*C(\IT)$ is the reduced free product with respect to the Haar measures.
\end{exa}

At the end of this section,
we discuss isometrically shift-absorbing actions of compact groups on Kirchberg algebras.
When the \Cs-algebra $A$ is a Kirchberg algebra, and $G$ is abelian or profinite,
isometrically shift-absorbing actions can be characterised as follows.
The idea of the following proof is given by Yuhei Suzuki.
\begin{prop}\label{prop profinite}
Let $\alpha\colon G\acts A$ be an action of a second countable profinite group on a Kirchberg algebra.
The following are equivalent.
\begin{itemize}
    \item[(1)] $\alpha$ is isometrically shift-absorbing. 
    \item[(2)] $\alpha$ is a faithful action such that $A^G$ is purely infinite simple and $M(A)\cap(A^G)^{\prime}=\IC$ holds.
\end{itemize}
\end{prop}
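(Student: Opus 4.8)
The plan is to prove the two implications separately, using the machinery already set up. For $(1)\Rightarrow(2)$: an isometrically shift-absorbing action is automatically faithful by definition (the map $\fs$ is isometric and $G$-equivariant, so if $\alpha_g=\id$ then $\lambda_g=\id$ on $L^2(G)$, forcing $g=1_G$). That $A^G$ is purely infinite simple follows from Example \ref{exa isa1} together with Remark \ref{rmk_pis}: $A$ is a Kirchberg algebra hence purely infinite simple, so $A^G$ is simple by Example \ref{exa isa1} and purely infinite by Remark \ref{rmk_pis}. Finally, $M(A)\cap (A^G)'=\IC$ follows because Example \ref{exa isa1} shows the inclusion $A^G\subset A$ is \Cs-irreducible, and Remark \ref{rmk irrd} then gives $M(A)\cap (A^G)'=\IC$. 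This direction requires no new work beyond assembling the earlier results.

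The substance is $(2)\Rightarrow(1)$. First I would stabilize: by Lemma \ref{easy}(2) and the hypotheses, the stabilized action $\alpha\otimes\id_\IK\colon G\acts A\otimes\IK$ satisfies the assumptions of Section \ref{sec decomp} (faithful, stable; fixed-point algebra $A^G\otimes\IK$ simple; trivial relative commutant), and isometric shift-absorption is preserved under (de)stabilization, so it suffices to work with $A$ stable. Then the family $\{\rho_\sigma\}_{\sigma\in\widehat G}$ of endomorphisms of $A^G$ from Section \ref{sec decomp} is available. Since $G$ is profinite, each $\rho_\sigma$ has finite depth (its powers decompose into finitely many irreducibles coming from the finitely many irreducible representations appearing in tensor powers of $\sigma$, because $\sigma$ factors through a finite quotient), so by Theorem 7.5 of \cite{I1} together with Lemma 1.1 of \cite{Ki} — exactly as invoked in the second half of Example \ref{exa ab} — every non-trivial $\rho_\sigma$ is properly outer. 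Hence by Example \ref{exa ab} (or directly by Lemma \ref{outer1}) the inclusion $A^G\subset A$ satisfies Condition ($*$). Now I would invoke the $G$-equivariant classification of Kirchberg algebra actions of Gabe--Szab\'o: to produce the required map $\theta\colon(L^2(G,A),\lambda\otimes\alpha)\to (A_{\infty,\alpha},\alpha_\infty)$ of Theorem \ref{char}, it is enough to know that $\alpha$ is \emph{amenable} (automatic, as $G$ is profinite hence compact, so every action is amenable) and that $\alpha$ absorbs the model action $\gamma^{\otimes\infty}$ of Example \ref{isa exa} up to cocycle conjugacy. The cleanest route is: since $A^G$ is purely infinite simple and $M(A)\cap(A^G)'=\IC$, the crossed product $A\rtimes G$ is simple (Proposition A), and for profinite $G$ one can build a unital equivariant embedding of $(L^2(G)\otimes\ell^2(\IN),\lambda\otimes\id)$ into $F_{\infty,\alpha}(A)$ by approximating $G$ by its finite quotients $G/G_n$, using at each finite stage that the quotient action $G/G_n\acts A$ is pointwise outer (from proper outerness of the $\rho_\sigma$) on a Kirchberg algebra, hence isometrically shift-absorbing by Theorem 3.15 of \cite{GS}, and taking an inductive/limit construction of the isometries.

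The main obstacle I anticipate is this last step — producing the shift map $\fs$ itself rather than merely verifying Condition ($*$). Condition ($*$) is a consequence of proper outerness and is comparatively soft, but isometric shift-absorption is a genuinely stronger, constructive property, and the passage from "all $\rho_\sigma$ properly outer" back to "the action absorbs the infinite-repeat quasi-free model" is where the profinite (or abelian) hypothesis is essential: it lets one reduce to finite groups, where Theorem 3.15 of \cite{GS} identifies pointwise outerness with isometric shift-absorption, and then assemble the finite-level data coherently along the projective system $G=\varprojlim G/G_n$. I would organize this as a lemma: for a profinite group $G=\varprojlim G/G_n$ acting on a Kirchberg algebra with the action of each $G/G_n$ pointwise outer, the action of $G$ is isometrically shift-absorbing, proved by a diagonal argument choosing shift maps $\fs_n$ for $G/G_n$ compatibly in $F_{\infty,\alpha}(A)$ and using a reindexation trick in the central sequence algebra. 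Everything else — faithfulness, pure infiniteness of $A^G$, triviality of the relative commutant, \Cs-irreducibility — is bookkeeping with results already in the paper.
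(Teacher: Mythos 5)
Your $(1)\Rightarrow(2)$ direction is fine and is exactly the paper's argument (Example \ref{exa isa1}, Remark \ref{rmk_pis}, and Remark \ref{rmk irrd}). The gap is in $(2)\Rightarrow(1)$, and it is concrete: you repeatedly speak of ``the quotient action $G/G_n\acts A$,'' but no such action exists. Since $\alpha$ is faithful, a nontrivial clopen normal subgroup $K_n\leq G$ does not act trivially on $A$, so $G/K_n$ does not act on $A$; the objects you must work with are the induced actions $G/K_n\acts A^{K_n}$ on the fixed-point algebras of the subgroups. To apply Theorem 3.15 of \cite{GS} at each finite stage you then have to verify that $A^{K_n}$ is again a Kirchberg algebra and that the induced action is pointwise outer, and this is precisely the substance of the paper's proof that your proposal omits: $A^{K_n}$ is purely infinite simple because there is a finite-index conditional expectation onto $A^G$ (Theorem 3.9 of \cite{I1}), it is nuclear and separable because of the conditional expectation from $A$, the induced $G/K_n$-action is faithful by the Galois correspondence (available here since $G$ is profinite, as noted after Corollary \ref{cor ab}), and it is outer because an implementing unitary would lie in $M(A^{K_n})\cap(A^{G})^{\prime}\subset M(A)\cap(A^{G})^{\prime}=\IC$. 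Your substitute claim --- pointwise outerness of a finite-quotient action on $A$ ``from proper outerness of the $\rho_\sigma$'' --- is not a meaningful statement about the correct objects and does not supply these verifications.

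A second, smaller defect is in the assembly step: the shift isometries produced at stage $n$ lie in $A_{\alpha,\infty}\cap (A^{K_n})^{\prime}$ and satisfy the required relations only relative to elements of $A^{K_n}$, not relative to all of $A$, so the diagonal/reindexation argument must use $A=\overline{\cup_{n}A^{K_n}}$ and $\widehat{G}=\cup_{n}\widehat{G/K_n}$ together with separability of $A$; your sketch has the right flavor but is anchored to the nonexistent actions above. Finally, the detour through Condition ($*$), quasi-product considerations, and ``absorption of $\gamma^{\otimes\infty}$ up to cocycle conjugacy'' is unnecessary and partly circular (absorbing the model action is essentially the property to be proved, and the duality/classification route via Theorem 6.2 of \cite{GS} is what the paper uses only in the abelian case, Proposition \ref{prop ab}, not here); the paper's profinite argument is the direct finite-quotient construction and needs none of it.
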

\begin{proof}
The implication $(1)\Rightarrow (2)$ follows from Example \ref{exa isa1} and Remark \ref{rmk_pis}.

We assume $(2)$ holds.
Take a decreasing sequence $\{K_{n}\}_n$ of normal clopen subgroups of $G$ with $\cap_{n}K_{n}=\{1_G\}$.
We claim that the induced action $\alpha_{n}\colon G/K_{n}\acts A^{K_n}$ is isometrically shift-absorbing for every $n$.
Fix $n\in\IN$.
There is a finite index conditional expectation from $A^{K_n}$ onto $A^G$.
By Theorem 3.9 of \cite{I1},
$A^{K_n}$ is purely infinite simple.
Since there is a natural conditional expectation from $A$ onto $A^{K_n}$,
$A^{K_n}$ is also a Kirchberg algebra.
Hence,
it suffices to show that $\alpha_n$ is outer by Theorem 3.15 of \cite{GS}.
As noted after Corollary \ref{cor ab},
the Galois correspondence for $\alpha$ holds.
Then,
$\alpha_n$ is faithful.
Hence, 
we get the claim by assumption $M(A^{K_n})\cap(A^{G})^{\prime}\subset M(A)\cap(A^{G})^{\prime}=\IC$.

To show that $\alpha$ is isometrically shift-absorbing,
it suffices to constract a family $\{S_{\pi,i, j}\}_{\pi\in\widehat{G}, 1\leq i, j\leq\mathrm{dim}(\pi)}$ of isometries in $A_{\alpha,\infty}\cap A^{\prime}$ such that
\[
aS_{\sigma,k, l}^{*}S_{\pi,i, j}=\delta_{(\sigma,k, l),(\pi,i, j)}a
\]
and
\[
a\alpha_{\infty, g}(S_{\pi,i, j})=a\sum_{k=1}^{\mathrm{dim}(\pi)}\langle\pi(g)\xi_{i},\xi_{k}\rangle S_{\pi,k, j}
\]
hold for every $g\in G$, $a\in A$ and $\pi,\sigma\in\widehat{G}$,
where $\xi_{1},\dots,\xi_{\mathrm{dim}(\pi)}$ are orthogonal basis of a representation space of $\pi$.
Since the action $\alpha_{n}\colon G/K_{n}\acts A^{K_n}$ is isometrically shift-absorbing,
we get a family $\{S_{\pi,i, j}^{(n)}\}_{\pi\in\widehat{G/K_n}, 1\leq i, j\leq\mathrm{dim}(\pi)}$ of $A_{\alpha,\infty}\cap (A^{K_n})^{\prime}$ satisfying
\[
aS_{\sigma,k, l}^{(n)*}S_{\pi,i, j}^{(n)}=\delta_{(\sigma,k, l),(\pi,i, j)}a
\]
and
\[
a\alpha_{\infty, g}(S_{\pi,i, j}^{(n)})=a\sum_{k=1}^{\mathrm{dim}(\pi)}\langle\pi(g)\xi_{i},\xi_{k}\rangle S_{\pi,k, j}^{(n)}
\]
for every $g\in G$, $a\in A^{K_n}$ and $\pi,\sigma\in\widehat{G/K_n}$.
Since we have $\widehat{G}=\cup_{n}\widehat{G/K_n}$,
$A=\overline{\cup_{n}A^{K_n}}$ and $A$ is separable,
by a standard reindexation trick and the diagonal argument,
we can construct the family $\{S_{\pi, i, j}\}_{\pi\in\widehat{G}, 1\leq i, j\leq\mathrm{dim}(\pi)}$ with required properties.
\end{proof}

We get a similar result for actions of compact abelian groups.
The idea of the following proof is given by Masaki Izumi.
\begin{prop}\label{prop ab}
Let $\alpha\colon G\acts A$ be a stable action of a compact second countable abelian group on a Kirchberg algebra.
The following are equivalent.
\begin{itemize}
    \item[(1)] $\alpha$ is isometrically shift-absorbing. 
    \item[(2)] $\alpha$ is a faithful action such that $A^G$ is purely infinite simple and $M(A)\cap(A^G)^{\prime}=\IC$ holds.
\end{itemize}  
\end{prop}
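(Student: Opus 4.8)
The direction $(1)\Rightarrow(2)$ would go exactly as in Proposition \ref{prop profinite}. Since $A$ is a Kirchberg algebra it is purely infinite simple, so Example \ref{exa isa1} gives that $A^{G}$ is simple and that $A^{G}\subset A$ is \Cs-irreducible, hence $M(A)\cap(A^{G})^{\prime}=\IC$ by Remark \ref{rmk irrd}, while Remark \ref{rmk_pis} gives that $A^{G}$ is purely infinite simple. Faithfulness of $\alpha$ is then immediate from the shift map: if $\alpha_{g}=\id$ for some $g\neq 1_{G}$, then $\fs(\lambda_{g}\xi)=\tilde{\alpha}_{\infty,g}(\fs(\xi))=\fs(\xi)$ for every $\xi\in L^{2}(G)$, so $\langle\lambda_{g}\xi-\xi,\lambda_{g}\xi-\xi\rangle=\fs(\lambda_g\xi-\xi)^{*}\fs(\lambda_g\xi-\xi)=0$ for all $\xi$, forcing $g=1_{G}$.

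For $(2)\Rightarrow(1)$ I would pass to the dual picture, exploiting that $\widehat{G}$ is a countable \emph{discrete} abelian group. Since, by (2), $\alpha$ is faithful and stable with $A^{G}$ simple and $M(A)\cap(A^{G})^{\prime}=\IC$, the triple $(A,G,\alpha)$ satisfies the standing hypotheses of Section \ref{sec decomp}, so Lemma \ref{canonical} yields $A\rtimes G\cong A^{G}$; this is a Kirchberg algebra, being purely infinite simple by (2), separable, and nuclear (as $A$ is nuclear and $G$ is amenable). Next I would check that the dual action $\hat{\alpha}\colon\widehat{G}\acts A\rtimes G$ is pointwise properly outer. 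Under the identification $A\rtimes G\cong A^{G}$, the automorphism $\hat{\alpha}_{\sigma}$ is implemented by the dual endomorphism $\rho_{\sigma}$ of Section \ref{sec decomp} (cf. Remark \ref{rmk quasi product}), and in the abelian case each $\rho_{\sigma}$ with $\sigma\neq 1_{\widehat{G}}$ is a properly outer automorphism by the second half of Example \ref{exa ab}. Alternatively, if some $\hat{\alpha}_{\sigma_{0}}$ with $\sigma_{0}\neq 1_{\widehat{G}}$ were not properly outer, one would produce a non-scalar central multiplier of $(A\rtimes G)\rtimes_{\hat{\alpha}}\widehat{G}\cong A\otimes\IK(L^{2}(G))$, contradicting the simplicity of the latter. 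By Theorem 3.15 of \cite{GS}, $\hat{\alpha}$ is therefore isometrically shift-absorbing.

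It then remains to transport this back along Takai duality. The bidual action of $\widehat{\widehat{G}}=G$ on $(A\rtimes G)\rtimes_{\hat{\alpha}}\widehat{G}\cong A\otimes\IK(L^{2}(G))$ is a cocycle perturbation of $\alpha\otimes\id_{\IK(L^{2}(G))}$, hence cocycle conjugate to $\alpha\otimes\id_{\IK}$, and therefore, as $\alpha$ is stable, to $\alpha$ itself; and isometric shift-absorption is invariant under cocycle conjugacy and under stabilization, the central sequence algebras being canonically identified. So the proof reduces to the assertion that the dual action of an isometrically shift-absorbing action of a discrete amenable abelian group is again isometrically shift-absorbing. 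This Takai-duality step is the crux and the main obstacle: it should be available from, or provable along the lines of, the permanence results of \cite{GS}, but if a direct reference is lacking I would establish it by unwinding the bimodule-map characterisation of Theorem \ref{char} for $\hat{\alpha}$ and pushing the resulting isometries through Takai duality into the central sequence algebra of $(A\rtimes G)\rtimes_{\hat{\alpha}}\widehat{G}$, with the discreteness of $\widehat{G}$ keeping the reindexation bookkeeping manageable, in the spirit of the diagonal argument ending the proof of Proposition \ref{prop profinite}. A more literal analogue of the profinite proof would instead reduce, via closed subgroups $K_{n}$ with $G/K_{n}$ a compact abelian Lie group $\IT^{k}\times F$, to the case $G=\IT^{k}\times F$; but the torus factors still call for the same duality input, so the dual-action route above seems preferable.
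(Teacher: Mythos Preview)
Your $(1)\Rightarrow(2)$ is fine and matches the paper. For $(2)\Rightarrow(1)$ you correctly pass to the dual picture, identify $A\rtimes G$ with a Kirchberg algebra via Lemma \ref{canonical}, and deduce that $\hat{\alpha}$ is outer, hence isometrically shift-absorbing by \cite[Theorem 3.15]{GS}. But the step you flag as ``the crux'' is a genuine gap: there is no general permanence result in \cite{GS} saying that the dual of an isometrically shift-absorbing action is again isometrically shift-absorbing, and unwinding Theorem \ref{char} through Takai duality does not produce such a statement---isometric shift-absorption for the compact dual $G$ is a strictly stronger condition than outerness, and the central-sequence isometries for $\hat{\alpha}$ live in $(A\rtimes G)_{\infty}$, not in $A_{\infty}$. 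Your proposed reindexation/diagonal argument has no obvious finite truncation to work with, unlike the profinite case.

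The paper closes this gap by introducing a comparison action. One tensors with the model action $\gamma^{\otimes\infty}\colon G\acts\cO_{\infty}$ of Example \ref{isa exa}, so that $\alpha\otimes\gamma^{\otimes\infty}$ is isometrically shift-absorbing by construction and still satisfies (2). By \cite[Corollary 3.12]{GS} one has $(A,\alpha)\sim_{\KK^{G}}(A\otimes\cO_{\infty},\alpha\otimes\gamma^{\otimes\infty})$, and Baaj--Skandalis duality transports this to a $\KK^{\widehat{G}}$-equivalence between the dual systems. Both dual actions are outer actions of the discrete group $\widehat{G}$ on stable Kirchberg algebras, so the dynamical Kirchberg--Phillips theorem \cite[Theorem 6.2]{GS} upgrades the $\KK^{\widehat{G}}$-equivalence to a cocycle conjugacy of $\hat{\alpha}$ and $\widehat{\alpha\otimes\gamma^{\otimes\infty}}$. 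Takai duality then gives that $\alpha$ is cocycle conjugate to $\alpha\otimes\gamma^{\otimes\infty}$, hence isometrically shift-absorbing. The missing idea in your argument is precisely this use of the classification theorem on the dual side, with the comparison action supplying the target.
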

\begin{proof}
The implication $(1)\Rightarrow(2)$ follows from Example \ref{exa isa1} and Remark $\ref{rmk_pis}$.

To show the converse,
let $\gamma^{\otimes\infty}\colon G\acts \cO_{\infty}$ be the action in Example \ref{isa exa}.
By Corollary 3.12 and Proposition 3.8 of \cite{GS},
we have $(A,\alpha)\sim_{\KK^G}(A\otimes\cO_{\infty}, \alpha\otimes\gamma^{\otimes\infty})$ and $\alpha\otimes\gamma^{\otimes\infty}$ is isometrically shift-absorbing.
Hence,
both $\alpha$ and $\alpha\otimes\gamma^{\otimes\infty}$ satisfy the assumptions of (2).
By Baaj-Skandalis duality
(see \cite[Theorem 6.19]{BS}), 
\[
(A\rtimes G,\widehat{\alpha})\sim_{\KK^{\widehat{G}}}((A\otimes\cO_{\infty})\rtimes G,\widehat{\alpha\otimes\gamma^{\otimes\infty}})
\]hold.
Since $\alpha$ and $\alpha\otimes\gamma^{\otimes\infty}$ are stable,
Proposition A implies that $A\rtimes G\cong A^G$ and $(A\otimes\cO_{\infty})\rtimes G\cong(A\otimes\cO_{\infty})^G$ are stable Kirchgerg algebras.
Since $M(A\rtimes G)\cap A^{\prime}\subset M(A\rtimes G)\cap (A^{G})^{\prime}=l^{\infty}(\widehat{G})$,
the dual action $\widehat{\alpha}$ is outer.
Similarly,
$\widehat{\alpha\otimes\gamma^{\otimes\infty}}$ is also outer.
By Theorem 6.2 of \cite{GS},
$\widehat{\alpha}$ is cocycle conjugate to $\widehat{\alpha\otimes\gamma^{\otimes\infty}}$.
Hence,
$\alpha$,
$\widehat{\widehat{\alpha}}$,
$\alpha\otimes\gamma^{\otimes\infty}$
and $\widehat{\widehat{\alpha\otimes\gamma^{\otimes\infty}}}$
are cocycle conjugate.
This implies that $\alpha$ is isometrically shift-absorbing.
\end{proof}
Thanks to the above propositions,
when $\alpha\colon G\acts A$ be an action on a separable nuclear \Cs-algebra,
by taking a tensor product of the Cunttz algebra $\cO_{\infty}$,
Corollary \ref{cor ab} follows from Corollary \ref{cor isa}.
Moreover,
the following corollary gives examples of isometrically shift-absorbing actions.
\begin{cor}[See Proposition 7.4 of \cite{Ka}]\label{quasi free}
 Let $G$ be a compact second countable abelian group,
 $\{\omega_{i}\}_{i=1}^{\infty}$ be a sequence of $\widehat{G}$,
 and $\alpha\colon G\acts \cO_{\infty}$ be a quasi-free action induced by the unitary representation $\oplus_{i=1}^{\infty}\omega_{i}\colon G\acts l^{2}(\IN)$.
 Then,
 the following are equivalent.
 \begin{itemize}
     \item[(1)] The action $\alpha\otimes\mathrm{id}_{\IK}\colon G\acts \cO_{\infty}\otimes\IK$ is isometrically shift-absorbing.
     \item[(2)] The dual group $\widehat{G}$ is generated by $\{\omega_{i}\}_{i}$ as a semi-group.
     \item[(3)] The crossed product $\cO_{\infty}\rtimes G$ is purely infinite simple.
 \end{itemize}
\end{cor}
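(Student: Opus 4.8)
The strategy is to feed the known description of $\cO_{\infty}\rtimes G$ for quasi-free actions into Proposition~\ref{prop ab}. First I would note that $\alpha\otimes\id_{\IK}\colon G\acts\cO_{\infty}\otimes\IK$ is a stable action of a compact second countable abelian group on the stable Kirchberg algebra $\cO_{\infty}\otimes\IK$, so Proposition~\ref{prop ab} applies and shows that (1) holds if and only if $\alpha\otimes\id_{\IK}$ is faithful, $(\cO_{\infty}\otimes\IK)^{G}$ is purely infinite simple, and $M(\cO_{\infty}\otimes\IK)\cap((\cO_{\infty}\otimes\IK)^{G})'=\IC$. Since $(\cO_{\infty}\otimes\IK)^{G}=(\cO_{\infty})^{G}\otimes\IK$, and since $M(\cO_{\infty}\otimes\IK)\cap((\cO_{\infty})^{G}\otimes\IK)'$ equals $(\cO_{\infty}\cap((\cO_{\infty})^{G})')\otimes 1$, this says: $\alpha$ is faithful, $(\cO_{\infty})^{G}$ is purely infinite simple, and the inclusion $(\cO_{\infty})^{G}\subset\cO_{\infty}$ is \Cs-irreducible. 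Because $\alpha$ is the quasi-free action of $\bigoplus_{i}\omega_{i}$, one has $\ker\alpha=\bigcap_{i}\ker\omega_{i}$, hence by Pontryagin duality faithfulness of $\alpha$ is equivalent to $\{\omega_{i}\}$ generating $\widehat{G}$ as a group --- a consequence of, but formally weaker than, (2).

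Next I would record $(2)\Leftrightarrow(3)$: this is a corollary of Proposition~7.4 of~\cite{Ka}, noting that $\cO_{\infty}\rtimes G$ is purely infinite simple exactly when $(\cO_{\infty}\rtimes G)\otimes\IK\cong(\cO_{\infty}\otimes\IK)\rtimes G$ is. It then remains to identify the three conditions produced by Proposition~\ref{prop ab} with (3). For the forward direction, granting those conditions the stabilized action $\alpha\otimes\id_{\IK}$ satisfies the standing hypotheses of Section~\ref{sec decomp}, so Lemma~\ref{canonical} gives $(\cO_{\infty}\otimes\IK)\rtimes G\cong(\cO_{\infty})^{G}\otimes\IK$, which is purely infinite simple; hence $\cO_{\infty}\rtimes G$ is purely infinite simple, i.e.\ (3). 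For the converse, if $\cO_{\infty}\rtimes G$ is purely infinite simple then it is simple, which forces $\alpha$ to be faithful (otherwise $C^{*}(\ker\alpha)$ would lie in $Z(M(\cO_{\infty}\rtimes G))$ and contain a nontrivial projection, impossible since the centre of the multiplier algebra of a simple \Cs-algebra is $\IC$); moreover $(\cO_{\infty})^{G}\cong p_{1_{\widehat{G}}}(\cO_{\infty}\rtimes G)p_{1_{\widehat{G}}}$ is a hereditary subalgebra of a purely infinite simple \Cs-algebra, hence itself purely infinite simple, and in particular simple. One then recovers \Cs-irreducibility of $(\cO_{\infty})^{G}\subset\cO_{\infty}$ and applies Proposition~\ref{prop ab}.

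The main obstacle is precisely that last point: deducing $\cO_{\infty}\cap((\cO_{\infty})^{G})'=\IC$ from simplicity of $(\cO_{\infty})^{G}$. The approach I would take is to analyse $R:=\cO_{\infty}\cap((\cO_{\infty})^{G})'$ directly: it is a unital $G$-invariant \Cs-subalgebra with $R^{G}\subset Z((\cO_{\infty})^{G})=\IC$, and using the spectral grading $\cO_{\infty}=\overline{\bigoplus_{\sigma\in\widehat{G}}\cO_{\infty}^{(\sigma)}}$ one checks that each homogeneous component $R\cap\cO_{\infty}^{(\sigma)}$ is at most one-dimensional and, when nonzero, spanned by a unitary $v_{\sigma}$ commuting with $(\cO_{\infty})^{G}$; the set $S$ of such $\sigma$ is a subgroup of $\widehat{G}$, and if $S\neq\{1_{\widehat{G}}\}$ then the $v_{\sigma}$ ($\sigma\in S$) are $(\cO_{\infty})^{H}$-central for $H:=S^{\perp}\subsetneq G$, which --- together with the $K$-theory of $\cO_{\infty}$ --- contradicts simplicity of $\cO_{\infty}$ after comparing it with a quotient of $(\cO_{\infty})^{G}\otimes C^{*}_{r}(S)$. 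Alternatively, this \Cs-irreducibility is part of the structure theory of quasi-free actions in \cite{Ka} and may simply be cited. The remaining ingredients --- transfer of the conditions across $\otimes\IK$, the Pontryagin computation of $\ker\alpha$, and the corner and centre arguments --- are routine.
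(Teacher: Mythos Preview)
Your framework matches the paper's: cite \cite{Ka} for $(2)\Leftrightarrow(3)$, and translate (1) via Proposition~\ref{prop ab} into faithfulness of $\alpha$, pure infinite simplicity of $(\cO_\infty)^G$, and triviality of $\cO_\infty\cap((\cO_\infty)^G)'$. Your $(1)\Rightarrow(3)$ via Lemma~\ref{canonical} (equivalently Proposition~A together with the full-corner picture) is fine, and you correctly isolate the crux of $(3)\Rightarrow(1)$ as the relative-commutant condition. The gap is in your attack on it. The spectral analysis of $R:=\cO_\infty\cap((\cO_\infty)^G)'$ begins correctly --- each nonzero homogeneous component is one-dimensional, spanned by a unitary $v_\sigma$, and the support $S\subset\widehat G$ is a subgroup --- but the next assertion is unsupported: there is no reason the $v_\sigma$ should commute with the strictly larger algebra $(\cO_\infty)^{H}$ for $H=S^\perp$, since commuting with $(\cO_\infty)^G$ does not propagate to intermediate fixed-point algebras. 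The concluding appeal to ``$K$-theory of $\cO_\infty$'' and a comparison with a quotient of $(\cO_\infty)^G\otimes C^*_r(S)$ is a hope rather than an argument (note $\cO_\infty$ is simple regardless), and the fallback of citing \cite{Ka} for \Cs-irreducibility is not substantiated.

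The paper's route is much shorter and exploits condition (2) directly, which is legitimate since $(2)\Leftrightarrow(3)$ is already in hand. Semi-group generation is precisely what allows one to write $\omega_j^{-1}=\prod_k\omega_{i_k}^{m_k}$ with \emph{nonnegative} exponents and hence to build, for each $j$, an isometry $T_j:=S_jS_{i_1}^{m_1}\cdots S_{i_n}^{m_n}$ lying in $(\cO_\infty)^G$. One then averages: the maps $\phi_n(x):=\tfrac{1}{n}\sum_{j=1}^n T_j^*xT_j$ are unital completely positive, and since the ranges of the $T_j$ sit under pairwise orthogonal $S_jS_j^*$, any point-$\sigma$-weak cluster point $\phi$ is a state. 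For $x\in R$ one has $T_j^*xT_j=xT_j^*T_j=x$ for every $j$, whence $x=\phi_n(x)=\phi(x)\in\IC$. This replaces your structural analysis with a two-line averaging trick; the only place where semi-group (rather than group) generation enters the whole corollary is in making the $T_j$ land inside $(\cO_\infty)^G$.
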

\begin{proof}
The equivalence of $(2)$ and $(3)$ follows from Proposition 7.4 of \cite{Ka}.

The implication $(1)\Rightarrow(3)$ follows from Proposition \ref{prop ab} and Proposition A.

For $(3)\Rightarrow (1)$,
it suffices to show that $\cO_{\infty}\cap(\cO_{\infty}^{G})^{\prime}=\IC$.
Let $\{S_{i}\}_{i=1}^{\infty}$ be generating isometries of $\cO_{\infty}$ satisfying $S_{j}^{*}S_{i}=\delta_{i,j}$ and $\alpha_{g}(S_i)=\omega_{i}(g)S_{i}$ for every $i, j$ and every $g\in G$.
Since $\{\omega_{i}\}_{i}$ generates $\widehat{G}$ as a semi-group,
for every $j$,
there are finite sequences $\{i_{1},\dots,i_{n}\}\subset\IN$  and $\{m_{1},\dots,m_{n}\}\subset\IN$ such that
$\omega_{j}^{-1}=\prod_{k=1}^{n}\omega_{i_k}^{m_k}$.
We define isometries $T_{j}:=S_{j}S_{i_1}^{m_1}\dots S_{i_n}^{m_n}$ of $\cO_{\infty}^{G}$ for all $j$.
Let $\cO_{\infty}\subset B(\cH)$ be a faithful representation and $\{\phi_{n}\colon\cO_{\infty}\rightarrow B(\cH)\}_{n}$ be a sequence of unital completely positive maps defined by
\[
\phi_{n}(x)=\frac{1}{n}\sum_{j=1}^{n}T_{j}^{*}xT_{j}
\]
for all $n$ and $x\in\cO_{\infty}$.
By the constructions of $\{T_{j}\}_j$,
the point $\sigma$-weak cluster point $\phi\colon \cO_{\infty}\rightarrow B(\cH)$ of $\{\phi_{n}\}_{n=1}^{\infty}$ is a state.
For every $x\in\cO_{\infty}\cap(\cO_{\infty}^{G})^{\prime}$,
we have $x=\phi(x)\in\IC$.
Therefore,
we get the statement. 
\end{proof}
\subsection{More general results}
In a very recent article \cite{I2} by Izumi,
remarkable characterizations for quasi-product actions \cite[Theorem 1.1]{I2} and isometrically shift-absorbing actions \cite[Theorem 1.2]{I2} of compact groups on simple \Cs-algebras are proved.
In \cite{I2},
he proved generalized results of Proposition \ref{prop profinite}, Proposition \ref{prop ab}, and Corollary \ref{quasi free} for every compact second countable group that is not necessarily abelian or profinite
(see Theorem 1.2 and Proposition 7.2 of \cite{I2}).

Using \cite[Theorem 1.1]{I2},
when a \Cs-algebra $A$ is separable,
Theorem \ref{main} is improved as follows.
\begin{thm}\label{main2}
    Let $\alpha\colon G\acts A$ be a faithful action of a compact second countable group $G$ on a separable \Cs-algebra $A$.
    If the fixed point algebra $A^{G}$ is simple and the relative commutant $M(A)\cap (A^{G})^{\prime}$ is trivial,
    then the inclusion $A^{G}\subset A$ is \Cs-irreducible,
and the map
\[
\Phi\colon\{H\mid H\leq G\}\ni H\mapsto A^{H}\in\{D\mid A^{G}\subset D\subset A\}
\] 
is a bijection from the set of all closed subgroups of $G$ to the set of all intermediate \Cs-subalgebras between $A^G$ and $A$.
\end{thm}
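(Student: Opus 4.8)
The plan is to derive Theorem \ref{main2} from Theorem \ref{main}. Since $A^{G}$ is assumed simple, Theorem \ref{main} applies as soon as the inclusion $A^{G}\subset A$ is shown to satisfy Condition ($*$); once that is done, Theorem \ref{main} produces the bijection $\Phi$, and Lemma \ref{easy} (5) moreover gives that $A$ is simple and that $A^{G}\subset A$ is \Cs-irreducible. So the whole burden is to establish Condition ($*$) starting only from the hypotheses ``$\alpha$ faithful, $A$ separable, $A^{G}$ simple, $M(A)\cap (A^{G})^{\prime}=\IC$''.

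First I would pass to the stabilization. By Lemma \ref{easy} (2) together with the corner-subalgebra reduction already used in Example \ref{exa ab}, the inclusion $A^{G}\subset A$ satisfies Condition ($*$) as soon as $(A\otimes\IK)^{G}=A^{G}\otimes\IK\subset A\otimes\IK$ does. The stabilized action $\alpha\otimes\id_{\IK}$ is faithful and stable, its fixed-point algebra $A^{G}\otimes\IK$ is simple, and the relative commutant $M(A\otimes\IK)\cap(A^{G}\otimes\IK)^{\prime}$ is trivial (a standard consequence of $M(A)\cap(A^{G})^{\prime}=\IC$, via $M(A\otimes\IK)\cap(1\otimes\IK)^{\prime}\cong M(A)$).

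The crucial step is to invoke Izumi's characterization of quasi-product actions \cite[Theorem 1.1]{I2}: applied to the separable \Cs-algebra $A\otimes\IK$ with the action $\alpha\otimes\id_{\IK}$ --- whose fixed-point algebra is simple and which has trivial relative commutant --- it yields that $\alpha\otimes\id_{\IK}$, and hence $\alpha$, is a quasi-product action, and in particular that $A\otimes\IK$, hence $A$, is simple. At this point the triple $(A\otimes\IK,G,\alpha\otimes\id_{\IK})$ satisfies every standing hypothesis of Section \ref{sec decomp}, so the endomorphisms $\{\rho_{\sigma}\}_{\sigma\in\widehat{G}}$ of $A^{G}\otimes\IK$ are defined and, by Remark \ref{rmk quasi product}, each $\rho_{\sigma}$ with $\sigma\neq 1_{\widehat{G}}$ is properly outer. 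Then the argument of Example \ref{exa ab} applies verbatim --- decompose an element of $A\otimes\IK$ via Lemma \ref{decomp}, and compress it by a norm-one positive element of $A^{G}\otimes\IK$ supplied by Lemma \ref{lem prop outer} so as to nearly annihilate all non-trivial spectral components while nearly preserving $b_{0}$ --- and shows that $A^{G}\otimes\IK\subset A\otimes\IK$, hence $A^{G}\subset A$, satisfies Condition ($*$). Applying Theorem \ref{main} and Lemma \ref{easy} (5) then completes the proof.

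The main obstacle is precisely this middle step: one must deploy the deep result \cite[Theorem 1.1]{I2} correctly and, in doing so, verify that the stabilized system meets all the hypotheses required there and in Section \ref{sec decomp} --- in particular deducing simplicity of $A$ itself, which is not assumed in Theorem \ref{main2} but is needed to run the machinery of Section \ref{sec decomp} --- using only the four hypotheses of Theorem \ref{main2}. Everything after that is a bookkeeping reduction to results already in hand.
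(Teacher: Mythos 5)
Your proposal is correct and takes essentially the same route as the paper: the paper's proof consists of invoking Izumi's Theorem 1.1 of \cite{I2} to conclude that $\alpha$ is a quasi-product action and then citing Example \ref{exa ab} together with Theorem \ref{main}. Your additional steps (stabilizing before applying Izumi's theorem, and noting explicitly that simplicity of $A$ is what allows the Section \ref{sec decomp} machinery and the proper outerness of the $\rho_{\sigma}$ to be used) are exactly the details already packaged in Example \ref{exa ab}, so nothing genuinely different is being done.
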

\begin{proof}
    Theorem 1.1 of \cite{I2} implies that $\alpha$ is a quasi-product action.
    Then,
    we get the statement by Example \ref{exa ab} and Theorem \ref{main}.
\end{proof}

\section*{Appendix}
In this section,
we discuss the simplicity of crossed product \Cs-algebras by minimal actions of compact groups.
We show the following.
\begin{aprop}\label{prop simple}
 Let $\alpha$ be a faithful action of a compact group $G$ on a $\sigma$-unital \Cs-algebra $A$.
 If $A^G$ is simple and $M(A)\cap (A^{G})^{\prime}=\IC$,
 then the crossed product $A\rtimes G$ is simple.
\end{aprop}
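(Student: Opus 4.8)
The plan is to show that $A\rtimes G$ has no nontrivial closed two-sided ideals by passing to the corner algebra $e(A\rtimes G)e$, where $e=p_{1_{\widehat G}}=\int_G\lambda_g\,dg$ is the averaging projection, and using that this corner is isomorphic to $A^G$, which is simple by hypothesis. The key structural fact is that $e$ is a \emph{full} projection in $M(A\rtimes G)$: the ideal it generates is all of $A\rtimes G$. Once fullness is established, any nonzero closed ideal $J\trianglelefteq A\rtimes G$ must have nonzero image in the corner, i.e. $eJe\neq 0$; since $eJe$ is an ideal of $e(A\rtimes G)e\cong A^G$ and $A^G$ is simple, we get $eJe=e(A\rtimes G)e$, hence $e\in J$, and by fullness $J=A\rtimes G$.

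\textbf{First step: identify the corner.} I would use the same computation as in the proof of the lemma identifying $\cK(\cE_E)\cong A\rtimes G$ in Section~\ref{sec decomp}: under the covariant representation $(\phi,u)$ on $\cE_E$ one has $\phi\rtimes u(p_{1_{\widehat G}})=e$ (the Jones projection), and $e(A\rtimes G)e$ corresponds to $eA^Ge\cong A^G$ via $\eta$. Concretely, $e(A\rtimes G)e=\overline{\mathrm{span}}\{E(a)e\mid a\in A\}$ and the map $E(a)e\mapsto E(a)\in A^G$ is a $*$-isomorphism onto $A^G$. (One does not even need stability here, since this is exactly the statement that the basic construction corner is the fixed point algebra.)

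\textbf{Second step: fullness of $e$.} This is the main obstacle. I would argue as follows. Let $I$ be the closed two-sided ideal of $A\rtimes G$ generated by $e$. Since $I$ is $\widehat G$-invariant? — no, better: observe that $A\rtimes G=\overline{\mathrm{span}}\{a\lambda_g\mid a\in A, g\in G\}$ in the sense that $A\rtimes G$ is generated by $A$ and by the spectral projections of the $G$-action, and for each irreducible $\sigma\in\widehat G$ the spectral subspace is recovered by hitting $e$ with elements of $A$. More precisely, using the hypothesis $M(A)\cap (A^G)'=\IC$ and the Hilbert-space decomposition from Section~\ref{sec decomp}, one has the strict-topology identity $\sum_{\sigma,i}v_{\sigma,i}^*\,e\,v_{\sigma,i}=1_{M(A\rtimes G)}$ with $v_{\sigma,i}\in M(A)\subset M(A\rtimes G)$; hence the ideal generated by $e$ contains $1$, i.e. $e$ is full. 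If one wants to avoid invoking that full decomposition directly, one can instead note that the ideal generated by $e$ in $A\rtimes G$ is also an ideal when intersected with $A$ (an $\alpha$-invariant ideal of $A$), contains $E(A)=A^G$, and since $A^G\subset A$ is nondegenerate and $A$ is generated as a Banach space by $\bigcup_\sigma \cH_\sigma^* A^G$ (Lemma~\ref{decomp}, after stabilizing), this invariant ideal is all of $A$; then the ideal generated by $e$ contains $A$ and hence all of $A\rtimes G$.

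\textbf{Third step: conclude.} Given a nonzero closed ideal $J\trianglelefteq A\rtimes G$, pick $0\neq x\in J_+$. If $exe=0$ for every such $x$ we would get $eJe=0$, but then $JeJ^*$... — cleaner: since $J$ is an ideal and $e$ is full, the ideal generated by $J$ is $A\rtimes G$, so it suffices to show $e\in J$. Suppose $eJe=0$; then $eJe=0$ implies $\overline{(A\rtimes G)J(A\rtimes G)}$ has zero corner at $e$, contradicting fullness of $e$ unless $J=0$. Hence $eJe\neq 0$; as $eJe$ is a nonzero ideal in the simple algebra $e(A\rtimes G)e\cong A^G$, we get $eJe=e(A\rtimes G)e\ni e$, so $e\in J$, and fullness gives $J=A\rtimes G$. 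Therefore $A\rtimes G$ is simple. (Note the hypothesis that $\alpha$ is faithful is used only to ensure that the covariant representation and hence $A\rtimes G$ is ``large enough''; in the generality needed in Section~\ref{sec decomp} the action is also assumed stable, but stability is not needed for this appendix statement.)
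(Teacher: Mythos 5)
Your first step (the corner identification $e(A\rtimes G)e\cong A^G$) and your third step (the standard passage from a simple full corner to simplicity of the ambient algebra) are fine. The problem is your second step, and it is not a small one: given that the corner is isomorphic to the simple algebra $A^G$, simplicity of $A\rtimes G$ is \emph{equivalent} to fullness of $e=p_{1_{\widehat G}}$, so this step is not a reduction of the problem but the problem itself. The facts you invoke to establish it --- the strict identity $\sum_{\sigma,i}v_{\sigma,i}^{*}ev_{\sigma,i}=1_{M(A\rtimes G)}$ and the density of $\mathrm{span}\bigcup_{\sigma}\cH_{\sigma}^{*}A^{G}$ in $A$ (Lemma \ref{decomp}) --- belong to the machinery of Section \ref{sec decomp}, and that section is developed under the standing assumption that $A\rtimes G$ is simple: Proposition A is quoted at its very beginning, simplicity of $A\rtimes G$ is used to identify $\cK(\cE_{E})$ with $A\rtimes G$, and the applications of Theorem \ref{Brown} that produce the partial isometries $w_{\sigma}$, hence the bases $\{v_{\sigma,i}\}$ and the spaces $\cH_{\sigma}$, require the corners $e(A\rtimes G)e$ and $p_{\sigma}(A\rtimes G)p_{\sigma}$ to be full --- exactly what is not yet known. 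So the argument is circular.

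What the circular citation hides are the two genuine inputs. First, one must show that every $\sigma\in\widehat G$ actually occurs, i.e.\ $\mathrm{Sp}(\alpha)=\widehat G$; this is precisely where faithfulness enters, and your parenthetical that faithfulness only makes the crossed product ``large enough'' is mistaken: for the trivial action of a nontrivial compact group on a simple unital \Cs-algebra all your remaining hypotheses hold ($A^{G}=A$ is simple and $M(A)\cap(A^{G})^{\prime}=Z(A)=\IC$), yet $A\rtimes G\cong A\otimes C^{*}(G)$ is not simple and $e$ is not full. Second, nonvanishing of a spectral subspace must be upgraded to a $\mathrm{dim}(\sigma)$-dimensional Hilbert space of isometries in $M(A)$ (Lemma B), and that proof uses stability together with the finite-index simplicity results of \cite{I1} and Theorem \ref{Brown} --- which is why the paper first replaces $(A,\alpha)$ by $(A\otimes\IK,\alpha\otimes\mathrm{id}_{\IK})$, contrary to your remark that stability can be dispensed with. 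The paper then concludes not by a fullness argument but by Peligrad's criterion (Corollary 3.7 of \cite{Pel1}): $A\rtimes G$ is simple if and only if each $(A\otimes B(V_{\pi}))^{\alpha\otimes\mathrm{ad}\pi(G)}$ is simple (which follows from finite index over the simple $A^{G}$ and the trivial relative commutant, Theorem 3.3 of \cite{I1}) and $A_{1}(\pi)\neq0$ for all $\pi$. If you want to keep the corner/fullness scheme, you must prove fullness of $e$ from scratch, which amounts to re-proving these two inputs rather than quoting Section \ref{sec decomp}.
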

It is well-known that if an action of a compact group $G$ on a factor $M$ is minimal,
then the crossed product $M\overline{\rtimes}G$ is a factor.
To prove Proposition A,
we can use the same approach as in the case of factors. 
We use the following notations.
Suppose $\alpha\colon G\acts A$ is an action of a compact group $G$ on a $\sigma$-unital \Cs-algebra $A$.
For every $\pi\in\widehat{G}$,
let $\chi_{\pi}$ be the character of the representation $\pi$ and $A_{1}(\pi):=\{\int_{G}\chi_{\pi}(g)\alpha_{g}(x)dg\mid x\in A\}$ be the spectral subspace of $A$.
(See \cite{Pel1} for details.)
Define $\mathrm{Sp}(\alpha):=\{\pi\in\widehat{G}\mid A_{1}(\overline{\pi})\neq0\}$.
\begin{alem}[See Lemma I\hspace{-1.2pt}I\hspace{-1.2pt}I.3.4 of \cite{AHKT} and Proposition 3.6 of \cite{Ro}]\label{sp}
Let $\pi$ be an irreducible representation of $G$ with $\mathrm{dim}(\pi)=n$ and $V_{\pi}$ be a representation space.
If $\alpha$ is stable,
$A^G$ is simple,
and $M(A)\cap (A^G)^{\prime}=\IC$,
then the following are equivalent.
\begin{itemize}
    \item[(1)] $A_{1}(\overline{\pi})\neq0$.
    \item[(2)] There is a non-zero element $a\in A\otimes B(V_{\pi})$ such that $\overline{\alpha}_{g}(a)=a(1\otimes\pi(g))$ for all $g\in G$,
    where $\overline{\alpha}:=\alpha\otimes\mathrm{id}$.
    \item[(3)] There is a $n$-dimensional Hilbert space $\cH\subset M(A)$ such that $\alpha_{g}(\cH)=\cH$ for every $g\in G$ and the restriction $\alpha|_{\cH}$ is equivalent to $\pi$.
\end{itemize}
\end{alem}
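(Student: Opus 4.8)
The plan is to prove the cycle of implications $(3)\Rightarrow(2)\Rightarrow(1)$ and then close the loop with $(1)\Rightarrow(3)$, the last being the only implication that genuinely uses the simplicity and triviality-of-relative-commutant hypotheses together with stability. For $(3)\Rightarrow(2)$, suppose $\cH\subset M(A)$ is an $n$-dimensional $G$-invariant Hilbert space with $\alpha|_\cH\cong\pi$; picking an orthonormal basis $\{e_i\}$ of $\cH$ adapted to this equivalence, the element $a:=\sum_i e_i\otimes\langle\,\cdot\,,\xi_i\rangle$ (equivalently $a=\sum_{i,j}e_i\otimes m_{ij}$ with $m_{ij}$ matrix units of $B(V_\pi)$) built from $\cH$ satisfies the intertwining identity $\overline{\alpha}_g(a)=a(1\otimes\pi(g))$ by a direct computation using $\alpha_g(e_i)=\sum_j \pi(g)_{ji}e_j$. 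This is routine. For $(2)\Rightarrow(1)$, given such an $a\in M(A)\otimes B(V_\pi)$ (or in $A\otimes B(V_\pi)$), one extracts a nonzero element of $A_1(\overline\pi)$ by contracting $a$ against a matrix coefficient of $\pi$: the element $\int_G \chi_{\overline\pi}(g)\alpha_g(a_{k\ell})\,dg$, where $a_{k\ell}$ are the matrix components of $a$, is in the spectral subspace $A_1(\overline\pi)$, and nonvanishing of $a$ forces one such contraction to be nonzero (using the Peter--Weyl orthogonality relations). One may first need to cut down to $A$ from $M(A)$ by multiplying by an approximate unit of $A$, but since $A$ is an ideal in $M(A)$ this causes no difficulty.

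The substantive implication is $(1)\Rightarrow(3)$, and this is where I expect the main obstacle to lie. Starting from a nonzero $x\in A_1(\overline\pi)$, the idea — following Section III.3 of \cite{AHKT} and Section 4 of \cite{Ro} — is to consider the polar-type decomposition of $x$ and show that one can produce from $x$ an isometry-valued intertwiner. Concretely, form $a\in A\otimes B(V_\pi)$ out of $x$ as in the equivalence $(1)\Leftrightarrow(2)$ so that $\overline\alpha_g(a)=a(1\otimes\pi(g))$; then $a^*a\in (A\otimes B(V_\pi))^G = A^G\otimes B(V_\pi)$ commutes with $1\otimes\pi(g)$ for all $g$, hence by irreducibility of $\pi$ it lies in $A^G\otimes 1$, say $a^*a = h\otimes 1$ with $h\in A^G_+$ nonzero. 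Now I would use that $A^G$ is simple and stable (stability of $A^G$ following from stability of $A$, or more precisely from $A\rtimes G\cong (A\otimes\IK(L^2G))^G$ being stable as in Section~\ref{sec decomp}, together with the full corner argument) to replace $a$ by $a c$ for a suitable $c\in A^G$ so that $(ac)^*(ac)$ becomes a projection, and then, invoking Theorem~\ref{Brown} (Brown's stable isometry theorem) in the corner $p(A\rtimes G)p$ or directly in $M(A)$, promote this to an element $v\in M(A)\otimes B(V_\pi)$ with $v^*v = 1\otimes 1$ and still $\overline\alpha_g(v)=v(1\otimes\pi(g))$. The columns of $v$ then span the desired $n$-dimensional $G$-invariant Hilbert space $\cH\subset M(A)$ with $\alpha|_\cH\cong\pi$.

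The delicate point in $(1)\Rightarrow(3)$ is the passage from a single positive element $h=a^*a$ with no control on its spectrum to an honest isometry: one must use simplicity of $A^G$ to move $h$ to something invertible-in-a-corner (via Lemma A.2 of \cite{S}, or the Cuntz-comparison/stability arguments already used in Lemmas~\ref{simple} and~\ref{easy}), and then use stability plus Brown's theorem (Theorem~\ref{Brown}) to fill in the partial isometry. The hypothesis $M(A)\cap(A^G)^{\prime}=\IC$ enters precisely to guarantee that the relative commutant of $A^G$ inside $M(A\rtimes G)$ is the type $I$ algebra $\prod_{[\sigma]}B(V_\sigma)$ (as spelled out in Section~\ref{sec decomp}), which is what makes the matrix-coefficient bookkeeping and the irreducibility reduction $a^*a\in A^G\otimes 1$ go through cleanly. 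Once $(1)\Rightarrow(3)$ is in hand, Proposition~A follows: the spectral subspaces exhaust $A$, and a nonzero ideal of $A\rtimes G$, being globally invariant under the dual coaction, would have to meet some spectral piece nontrivially, contradicting simplicity of $A^G\cong eA^Ge = e(A\rtimes G)e$ via the full-corner argument.
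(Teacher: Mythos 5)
Your implications $(3)\Rightarrow(2)\Rightarrow(1)$ are routine and agree in substance with the paper (which runs the cycle in the other direction, $(3)\Rightarrow(1)\Rightarrow(2)\Rightarrow(3)$; the difference is immaterial). The problem is in your $(1)\Rightarrow(3)$, and it is a genuine gap, not a presentational one. From $\overline{\alpha}_{g}(a)=a(1\otimes\pi(g))$ one gets $\overline{\alpha}_{g}(aa^{*})=aa^{*}$, so it is $aa^{*}$ (not $a^{*}a$) that lies in $(A\otimes B(V_{\pi}))^{\overline{\alpha}}=A^{G}\otimes B(V_{\pi})$, and it does \emph{not} commute with $1\otimes\pi(g)$; meanwhile $a^{*}a$ is only fixed under $\alpha\otimes\mathrm{ad}\,\pi$, which is a strictly larger fixed-point algebra than $A^{G}\otimes 1$. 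So the reduction ``$a^{*}a=h\otimes1$ with $h\in A^{G}_{+}$'' is false in general: if $v$ is any intertwiner and $c$ is any element of $(A\otimes B(V_{\pi}))^{\alpha\otimes\mathrm{ad}\pi(G)}$, then $vc$ is again an intertwiner with $(vc)^{*}(vc)=c^{*}c$ arbitrary in that fixed-point algebra. Consequently the next step (``replace $a$ by $ac$ with $c\in A^{G}$ so that $(ac)^{*}(ac)$ is a projection, then apply Theorem \ref{Brown}'') has no foundation: the right-hand corrections live in $(A\otimes B(V_{\pi}))^{\alpha\otimes\mathrm{ad}\pi(G)}$, and even inside a simple \Cs-algebra one cannot in general turn a positive element exactly into a projection.

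What is actually needed—and what the paper supplies—is a comparison argument between the two \Cs-algebras $A^{G}\otimes B(V_{\pi})$ and $(A\otimes B(V_{\pi}))^{\alpha\otimes\mathrm{ad}\pi(G)}$, organized by the $2\times2$ linking algebra $\mathscr{P}=A\otimes B(V_{\pi})\otimes M_{2}$ with the twisted action $\sigma$: the diagonal corners $e_{1}\mathscr{P}^{G}e_{1}\cong A^{G}\otimes B(V_{\pi})$ and $e_{2}\mathscr{P}^{G}e_{2}\cong(A\otimes B(V_{\pi}))^{\alpha\otimes\mathrm{ad}\pi(G)}$ are stable, the second is simple because $\mathrm{id}_{A}\otimes\tau$ restricts to a finite index conditional expectation onto $A^{G}\otimes1$ and $M(A)\cap(A^{G})^{\prime}=\IC$ forces a trivial relative commutant (Theorem 3.3 of \cite{I1}), the off-diagonal corner is nonzero precisely by hypothesis (2), hence both diagonal corners are full and Theorem \ref{Brown} produces a \emph{unitary} $u\in M(A)\otimes B(V_{\pi})$ with $\overline{\alpha}_{g}(u)=u(1\otimes\pi(g))$; the Hilbert space $\cH$ is then assembled from $u$ using isometries $S_{1},\dots,S_{n}\in M(A^{G})$ coming from stability of $A^{G}$. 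This linking-algebra construction and the simplicity of the $\alpha\otimes\mathrm{ad}\pi$ fixed-point algebra are exactly the ideas missing from your sketch; your appeal to ``Brown's theorem in $p(A\rtimes G)p$ or directly in $M(A)$'' does not substitute for them, because without fullness/simplicity of the twisted fixed-point algebra there is no way to upgrade a single nonzero intertwiner to an isometric (indeed unitary) one.
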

\begin{proof}
 The implication (3) $\Rightarrow$ (1) follows from the Peter-Weyl theorem.

 To show (1) $\Rightarrow$ (2),
 take an element $x\in A$ with $\int_{G}\chi_{\overline{\pi}}(g)\alpha_{g}(x^{*})dg\neq0$ and an orthonormal basis $\{\xi_{1},\dots, \xi_n\}$ of $V_\pi$.
 Define an element $a:=\sum_{i,j}x_{i,j}\otimes e_{i,j}\in A\otimes B(V_\pi)$ as $x_{i,j}:=\int_{G}\langle\pi(g)\xi_{i},\xi_{j}\rangle \alpha_{g}(x)dg$ for every $i,j$,
 where $\{e_{i,j}\}_{i,j}$ are matrix units of $B(V_\pi)$ which correspond to $\xi_{1},\dots, \xi_n$.
 Since $\sum_{i}x_{i,i}^{*}\neq0$,
 we get $a\neq0$ and $\overline{\alpha}_{g}(a)=a(1\otimes\pi(g))$ by straightforward calculation.
 
 To show (2) $\Rightarrow$ (3),
 under the assumption (2),
 we claim that there is a unitary $u\in M(A)\otimes B(V_{\pi})$ with $\overline{\alpha}_{g}(u)=u(1\otimes\pi(g))$ for every $g\in G$.
 As in the proof of Lemma I\hspace{-1.2pt}I\hspace{-1.2pt}I.3.4 of \cite{AHKT},
 we set $\mathscr{P}:=A\otimes B(V_{\pi})\otimes M_2$ and $\sigma:G\acts \mathscr{P}$ as
 \[
 \sigma_{g}\left(
 \begin{bmatrix}
     x_{11}&x_{12}\\
     x_{21}&x_{22}
 \end{bmatrix}
 \right)
 =\begin{bmatrix}
     \overline{\alpha}_{g}(x_{11})&\pi(g)\overline{\alpha}_{g}(x_{12})\\
     \overline{\alpha}_{g}(x_{21})\pi(g)^{*}&\pi(g)\overline{\alpha}_{g}(x_{22})\pi(g)^{*}
 \end{bmatrix},
\]
where we write $\pi(g)$ instead of $1_{M(A)}\otimes\pi(g)$.
Suppose 
$e_{1}:=\begin{bmatrix}
     1&0\\
     0&0
 \end{bmatrix}$
 and 
 $e_{2}:=\begin{bmatrix}
     0&0\\
     0&1
 \end{bmatrix}$.
 By the same argument as in the proof of Lemma I\hspace{-1.2pt}I\hspace{-1.2pt}I.3.4 of \cite{AHKT},
 it suffices to show that $e_1$ and $e_2$ are Murry-von Neumann equivalent to 1 in $M(\mathscr{P}^G)$.
 Since the corners $e_{1}\mathscr{P}^{G}e_{1}\cong A^G\otimes B(V_{\pi})$ and $e_{2}\mathscr{P}^{G}e_{2}\cong (A\otimes B(V_{\pi}))^{\alpha\otimes\mathrm{ad}\pi(G)}$ are stable,
 if they are full corners,
 then we get the claim by Theorem \ref{Brown}.
 Let $\tau$ be the tracial state of $B(V_\pi)$.
 The restriction $\mathrm{id}_{A}\otimes\tau|_{(A\otimes B(V_{\pi}))^{\alpha\otimes\mathrm{ad}\pi(G)}}$ of $\mathrm{id}_{A}\otimes\tau$ is a finite index conditional expectation from $(A\otimes B(V_{\pi}))^{\alpha\otimes\mathrm{ad}\pi(G)}$ onto $A^{G}\otimes 1$.
 Since we have
 \[
 M\big((A\otimes B(V_{\pi}))^{\alpha\otimes\mathrm{ad}\pi(G)}\big)\cap(A^{G}\otimes1)^{\prime}
 =(M(A)\otimes B(V_{\pi}))^{\alpha\otimes\mathrm{ad}\pi(G)}\cap(A^{G}\otimes1)^{\prime}=\IC,
 \]
 $(A\otimes B(V_{\pi}))^{\alpha\otimes\mathrm{ad}\pi(G)}$ is simple by Theorem 3.3 of \cite{I1}.
 By assumption (2),
 we get $e_{1}\mathscr{P}^{G}e_{2}\neq0$.
 Since both $e_{1}\mathscr{P}^{G}e_{1}$ and $e_{2}\mathscr{P}^{G}e_{2}$ are simple,
 they are full corners of $\mathscr{P}^{G}$.
 Hence,
 we get the claim.
 Since $A^G$ is stable,
 there are isometries $S_{1},\dots,S_{n}\in M(A^G)$ with $\sum_{i=1}^{n}S_{i}S_{i}^{*}=1$.
 In the same way as in the proof of Proposition 3.6 of \cite{Ro},
 we can take isometries $v_{1},\dots,v_{n}\in M(A)$ such that 
 $v_{j}^{*}\alpha_{g}(v_{i})=\langle\pi(g)\xi_{i},\xi_{j}\rangle$ for all $i, j$ and $g\in G$.
 We get the Hilbert space $\cH$ in (3) as $\cH:=\mathrm{span}\{v_{1},\dots,v_{n}\}$. 
\end{proof}
\begin{proof}[Proof for Proposition A]
By replacing $(A, \alpha)$ with $(A\otimes\IK, \alpha\otimes\mathrm{id}_\IK)$,
we may assume $\alpha$ is stable.
First,
we show $\mathrm{Sp}(\alpha)=\widehat{G}$.
By using the same strategy as in Appendix C of \cite{AHKT},
it suffices to show that the closed subgroup $K:=\{g\in G\mid \pi(g)=1\mathrm{\ for\ all\ }\pi\in\mathrm{Sp}(\alpha)\}$ of $G$ is a trivial group $\{1_G\}$.
Take $k\in K$.
For every $\pi\in\widehat{G}$ and every $x\in A_{1}(\pi)$,
we have $\alpha_{k}(x)=x$.
Set the projection $q_{\pi}:=\int_{G}\chi_{\pi}(g)\lambda_{g}dg\in\rg(G)\subset M(A\rtimes G)$ for every $\pi\in\widehat{G}$,
where $q_{1_{\widehat{G}}}=p_{1_{\widehat{G}}}$
(see Lemma \ref{lem dual}).
Since the sum $\sum_{\pi\in\widehat{G}}q_{\pi}$ converges to 1 in the strict topology and $\hat{E}(q_{\pi}ap_{1_{\widehat{G}}})\in A_{1}(\pi)$ for all $a\in A$ and $\pi\in\widehat{G}$,
we get 
\begin{align*}
   ap_{1_{\widehat{G}}}
   =&\sum_{\pi\in\widehat{G}}q_{\pi}ap_{1_{\widehat{G}}}\\
   =&\sum_{\pi\in\widehat{G}}\hat{E}(q_{\pi}ap_{1_{\widehat{G}}})p_{1_{\widehat{G}}}\\
   =&\sum_{\pi\in\widehat{G}}\alpha_{k}(\hat{E}(q_{\pi}ap_{1_{\widehat{G}}}))p_{1_{\widehat{G}}}\\
   =&\sum_{\pi\in\widehat{G}}\hat{E}(\lambda_{k}q_{\pi}ap_{1_{\widehat{G}}})p_{1_{\widehat{G}}}\\
   =&\sum_{\pi\in\widehat{G}}\hat{E}(q_{\pi}\alpha_{k}(a)p_{1_{\widehat{G}}})p_{1_{\widehat{G}}}
   =\alpha_{k}(a)p_{1_{\widehat{G}}}
\end{align*}
for all $a\in A$.
Since $\alpha$ is faithful,
we get $k=1_G$.
Hence,
$\mathrm{Sp}(\alpha)=\widehat{G}$ holds.
Thanks to Corollary 3.7 of \cite{Pel1},
the crossed product $A\rtimes G$ is simple if and only if $(A\otimes B(V_{\pi}))^{\alpha\otimes\mathrm{ad}\pi(G)}$ is simple and $A_{1}(\pi)\neq0$ for all $\pi\in\widehat{G}$.
As in the proof of Lemma B,
every $(A\otimes B(V_{\pi}))^{\alpha\otimes\mathrm{ad}\pi(G)}$ is simple.
Therefore,
we get the statement.
\end{proof}

\subsection*{Acknowledgements}
The author is extremely grateful to Yasuyuki Kawahigashi,
who is her supervisor,
for his invaluable support.
She would like to thank David Kerr for supporting her stay in M\"{u}nster during this work.
She would also like to thank Masaki Izumi for many suggestions on various aspects of this paper.
She is grateful to G\'{a}bor Szab\'{o}
and Reiji Tomatsu for their helpful comments.
She is also grateful to Yuhei Suzuki for his valuable comment about Proposition \ref{prop profinite},
Yuki Arano for faithful discussions,
Fuyuta Komura for telling her references \cite{Pel} and \cite{R},
and 
Keigo Yokobori for sharing his master's thesis about relevant results.
She also thank the reviewers for their helpful suggestions.
This work was supported by JSPS KAKENHI Grant Number JP23KJ0560 and the WINGS-FMSP program at the University of Tokyo.
This work was partially supported by JST CREST program JPMJCR18T6.


\begin{thebibliography}{CFW}
%
\bibitem[AHKT]{AHKT}H. Araki, R. Haag, D. Kastler, and M. Takesaki;
{\it Extension of KMS states and chemical potential.}
Commun. Math. Phys. {\bf 53} (1977), 97--134.
%
\bibitem[B]{B}L. G. Brown;
{\it Semicontinuity and multipliers of \Cs-algebras.}
Canad. J. Math. {\bf 40} (1988), 865--988.
%
\bibitem[BEK]{BEK}O. Bratteli, G. A. Elliott, and A. Kishimoto;
{\it Quasi-product actions of a compact group on a \Cs-algebra.}
J. Funct. Anal. {\bf 115} (1993), no.2, 313–-343.
%
\bibitem[BO]{BO} N. P. Brown and N. Ozawa;
{\it \Cs-algebras and finite-dimensional approximations.}
Graduate Studies in Mathematics {\bf 88}. American Mathematical Society, Providence, RI, 2008.
%
\bibitem[BS]{BS}S. Baaj, G. Skandalis;
{\it \Cs-alg\`{e}bres de Hopf et th\'{e}orie de Kasparov \'{e}quivariante.}
K-Theory {\bf 2}, (1989), no. 6, 683--721.
%
\bibitem[CS]{CS}J. Cameron and R. R. Smith;
{\it A Galois correspondence for reduced crossed products of simple \Cs-algebras by discrete groups.}
Canad. J. Math. {\bf 71} (2019), no. 5, 1103--1125.
%
\bibitem[D]{D}K. Dykema;
{\it Simplicity and the stable rank of some free product \Cs-algebras.}
Trans. Amer. Math. Soc. {\bf 351} (1999), 1--40.
%
\bibitem[GS]{GS}J. Gabe and G. Szab\'{o};
{\it The dynamical Kirchberg-Phillips theorem.}
To appear in Acta Math., arxiv:2205.04933.
%
\bibitem[H1]{H1}U. Haagerup;
{\it Operator-valued weights in von Neumann algebras. I.}
J. Funct. Anal. {\bf 32} (1979), no. 2, 175–-206.
%
\bibitem[H2]{H2}U. Haagerup;
{\it On the dual weights for crossed products of von Neumann algebras. I. Removing separability conditions.}
Math. Scand. {\bf 43} (1978), no. 1, 99--118.
%
\bibitem[H3]{H3}U. Haagerup;
{\it On the dual weights for crossed products of von Neumann algebras. \two. Application of operator-valued weights}
Math. Scand. {\bf 43} (1978), no. 1, 119--140.
%
\bibitem[HPN]{NP}R. Hern{\'a}ndez Palomares and B. Nelson;
{\it Discrete inclusions of \Cs-algebras.}
arXiv: 2305.05072v1.
%
\bibitem[I1]{I1}M. Izumi;
{\it Inclusions of simple \Cs-algebras.}
J. Reine Angew. Math. {\bf 547} (2002), 97--138.
%
\bibitem[I2]{I2}M. Izumi;
{\it Minimal compact group actions on \Cs-algebras with simple fixed point algebras.}
preprint, arXiv: 2405.03231v1. 
%
\bibitem[ILP]{ILP}M. Izumi, R. Longo, and S. Popa;
{\it A Galois correspondence for compact groups of automorphisms of von Neumann algebras with a generalization to Kac algebras.}
J. Funct. Anal. {\bf 155} (1998), no. 1, 25--63.
%
\bibitem[JP]{JP}C. Jones and D. Penneys;
{\it Realizations of algebra objects and discrete subfactors.}
Adv. Math. {\bf 350} (2019), 588--661.
%
\bibitem[Ka]{Ka}T. Katsura;
{\it On crossed products of the Cuntz algebra $\cO_{\infty}$ by quasi-free actions of abelian groups.}
Operator algebras and mathematical physics, 209--233, Theta, Bucharest, (2003).
%
\bibitem[Ki]{Ki}A. Kishimoto;
{\it Outer automorphisms and reduced crossed products of simple \Cs-algebras.}
Comm. Math. Phys. {\bf 81} (1981), no. 3, 429--435.
%
 \bibitem[Ku]{Ku}J. Kustermans;
{\it Regular \Cs-valued weights.}
J. Operator Theory {\bf 44} (2000), 151--205.
%
\bibitem[LP]{LP}R. Longo and C. Peligrad;
{\it Noncommutative topological dynamics and compact actions on \Cs-algebras.}
J. Funct. Anal. {\bf 58} (1984), 157--174. 
%
\bibitem[OP]{OP}D. Olesen, G. K. Pedersen;
{\it Applications of the Connes spectrum to  \Cs-dynamical systems, I\hspace{-1.2pt}I\hspace{-1.2pt}I.}
J. Functional Analysis {\bf 45} (1982), no. 3, 357--390.
%
\bibitem[Ped]{Ped}G. K. Pedersen;
{\it \Cs-Algebras and their Automorphism Groups.}
London Math. Soc. Monographs, vol. 14, Academic Press, London-New-York 1979.
%
\bibitem[Pel1]{Pel1}C. Peligrad; 
{\it Locally compact group actions on \Cs-algebras and compact subgroups.} 
J. Func. Analysis {\bf 76} (1988), 126--140. 
%
\bibitem[Pel2]{Pel}C. Peligrad;
{\it A Galois correspondence for compact group actions on \Cs-algebras.}
J. Funct. Anal. {\bf 261} (2011), no. 5, 1227--1235.
%
\bibitem[Ro]{Ro}J. E. Roberts;
{\it Cross product of von Neumann algebras by group duals.} Sympos. Math. {\bf XX} (1976).
%
\bibitem[R\o]{R}M. R\o rdam;
{\it Irreducible inclusions of simple \Cs-algebras.}
arXiv: 2105.11899, (2021).
%
\bibitem[RS]{RS} M. R\o rdam and E. St\o rmer;
{\it Classification of Nuclear \Cs-Algebras.
Entropy in Operator Algebras.}
Encyclopaedia Math. Sci., vol. 126, Springer, Berlin, 2002.
%
\bibitem[S]{S}Y. Suzuki;
{\it Non-amenable tight squeezes by Kirchberg algebras.}
Math. Ann. {\bf 382} (2022), no.1--2, 631--653.
%
\bibitem[T]{T}R. Tomatsu;
{\it A Galois correspondence for compact quantum group actions.}
J. Reine Angew. Math. {\bf 633} (2009), 165--182
%
\bibitem[W]{W}Y. Watatani;
{\it Index for \Cs-subalgebras.}
Mem. Amer. Math. Soc. {\bf 83} (1990), no. 424, vi+117 pp.
%
\end{thebibliography}
\end{document}